\numberwithin{equation}{section}
\newtheorem{theorem}{Theorem}[section]
\newtheorem{lemma}[theorem]{Lemma}
\newtheorem{corollary}[theorem]{Corollary}
\newtheorem{proposition}[theorem]{Proposition}
\newtheorem{definition}[theorem]{Definition}
\newtheorem{remark}[theorem]{Remark}
\begin{document}

\providecommand{\ann}{\mathop{\rm ann}\nolimits}%
\providecommand{\gld}{\mathop{\rm gl. dim}\nolimits}%
\providecommand{\gord}{\mathop{\rm gor. dim}\nolimits}%
\providecommand{\pd}{\mathop{\rm pd}\nolimits}%
\providecommand{\rk}{\mathop{\rm rk}\nolimits}%
\providecommand{\Fac}{\mathop{\rm Fac}\nolimits}%
\providecommand{\ind}{\mathop{\rm ind}\nolimits}%
\providecommand{\Sub}{\mathop{\rm Sub}\nolimits}%
\providecommand{\coker}{\mathop{\rm coker}\nolimits}%
\providecommand{\cone}{\mathop{\rm cone}\nolimits}%
\def\s{\stackrel}
\def\A{\mathcal{A}}
\def\H{\mathcal{H}}
\def\C{\mathcal{C}}
\def\D{\mathcal{D}}
\def\DA{{D^b(A)}}
\def\dh{{D^b(H)}}
\def\DAA{{D^b(\A)}}
\def\K{{K^b(\proj A)}}
\def\H{\mathcal{H}}
\def\T{\mathcal{T}}
\def\P{\mathcal{P}}
\def\X{\mathcal{X}}
\def\Y{\mathcal{Y}}
\def\F{\mathcal{F}}
\def\R{\mathcal{R}}
\def\L{\mathcal{L}}
\def\U{\mathcal{U}}
\def\V{\mathcal{V}}
\def\W{\mathcal{W}}

\def\cc{\mathscr{C}}
\def\oo{\mathscr{O}}
\def\pp{\mathscr{P}}
\def\CP{\C(P^\bullet)}
\def\op{\text{op}}
\providecommand{\add}{\mathop{\rm add}\nolimits}%
\providecommand{\ann}{\mathop{\rm ann}\nolimits}%
\providecommand{\End}{\mathop{\rm End}\nolimits}%
\providecommand{\Ext}{\mathop{\rm Ext}\nolimits}%
\providecommand{\Hom}{\mathop{\rm Hom}\nolimits}%
\providecommand{\im}{\mathop{\rm im}\nolimits}%
\providecommand{\inj}{\mathop{\rm inj}\nolimits}%
\providecommand{\proj}{\mathop{\rm proj}\nolimits}%
\providecommand{\rad}{\mathop{\rm rad}\nolimits}%
\providecommand{\soc}{\mathop{\rm soc}\nolimits}%
\providecommand{\thick}{\mathop{\rm thick}\nolimits}%
\providecommand{\Tr}{\mathop{\rm Tr}\nolimits}%
\renewcommand{\dim}{\mathop{\rm dim}\nolimits}%
\renewcommand{\mod}{\mathop{\rm mod}\nolimits}%
\renewcommand{\ker}{\mathop{\rm ker}\nolimits}%
\renewcommand{\rad}{\mathop{\rm rad}\nolimits}%
\def \text{\mbox}
\def\ends{\end{enumerate}}
\newcommand{\id}{\operatorname{id}}
\renewcommand{\k}{\mathbf{k}}
\newcommand{\p}{\mathbf{P}}
\newcommand{\ii}{\mathbf{I}}
\newcommand{\q}{\mathbf{Q}}
\newcommand{\rr}{\mathbf{R}}
\newcommand{\x}{\mathbf{X}}
\newcommand{\y}{\mathbf{Y}}
\renewcommand{\c}{\mathbf{C}}
\renewcommand{\d}{\mathbf{D}}
\newcommand{\e}{\mathbf{E}}
\renewcommand{\t}{\mathbf{T}}


\title{Silted algebras}

\author[Buan]{Aslak Bakke Buan}
\address{
Department of Mathematical Sciences
Norwegian University of Science and Technology
7491 Trondheim
Norway
}
\email{aslakb@math.ntnu.no}

\author[Zhou]{Yu Zhou}
\address{
Department of Mathematical Sciences
Norwegian University of Science and Technology
7491 Trondheim
Norway
}
\email{yu.zhou@math.ntnu.no}

\begin{abstract}
We study endomorphism algebras of 2-term silting complexes in
derived categories of hereditary finite dimensional algebras,
or more generally of $\Ext$-finite hereditary abelian categories.
Module categories of such endomorphism algebras are known to occur as hearts of certain bounded $t$-structures in
such derived categories.
We show that the algebras occurring are exactly
the algebras of small homological dimension, which are algebras characterized by
the property that each indecomposable module either has
injective dimension at most one, or it has projective dimension at most one.
\end{abstract}

\thanks{
This work was supported by FRINAT grant number 231000, from the
Norwegian Research Council. Support by the Institut Mittag-Leffler (Djursholm, Sweden) is gratefully
acknowledged.
}

\keywords{Silting theory; Torsion pairs; Shod algebras; Derived categories.}

\maketitle

\section*{Introduction}

Happel and Ringel \cite{hr} introduced {\em tilted algebras}
in the early eighties. These are the finite dimensional algebras which occur as endomorphism algebras of tilting
modules over hereditary finite dimensional algebras.

The notion was generalized by Happel, Reiten and
Smal{\o} \cite{hrs}, who introduced {\em quasi-tilted algebras},
which are the algebras occurring as endomorphism algebras of tilting objects in hereditary abelian
categories with finiteness conditions.
Quasi-tilted algebras were shown to
have a natural homological characterization.
Consider the following property on the category
$\mod A$,  of finite dimensional right $A$-modules,
for a finite dimensional algebra $A$:
for each indecomposable module $X$, we
have either $\pd X \leq 1$ or $\id X \leq 1$ (that is: the projective or the injective dimension is at most one).
Let us call this the {\em shod}-property (=small homological
dimension).

In \cite{hrs}, they showed that the quasi-tilted algebras
are exactly the algebras of global dimension at most two
with the shod-property. They also showed that algebras
with the shod-property have global dimension at most three.

Then, Coelho and Lanzilotta \cite{cl} defined {\em shod algebras}, as the algebras with the shod-property.
Shod algebras of global dimension three, they called
{\em strictly shod}.
Later Reiten and Skowro\'{n}ski \cite{rs}, gave a characterization
of the strictly shod algebras, in terms of a property of
the AR-quiver $Q_A$ of $A$: namely the existence of what they called a faithful double section in $Q_A$.

Our aim is to show that shod algebras admit
a very natural characterization, using the notion
of silting complexes, as introduced by Keller and
Vossieck \cite{kv}. Let $\p$ be a complex in
the bounded homotopy category of finitely generated projective $A$-modules
$K^b(\proj A)$. Then $\p$ is called {\em silting}
if $\Hom_{\K}(\p, \p[i])= 0$ for $i>0$, and if $\p$ generates
$\K$ as a triangulated category.
Furthermore, we say that $\p$ is {\em 2-term} if $\p$ only has non-zero terms in degree $0$ and $-1$.
In Section \ref{sec:abel}, we generalize the notion to bounded
derived categories of abelian categories.
Note that $\K$ can be considered as a full subcategory of
the bounded derived category $\DA$, and as equal to
$\DA$ if $A$ has finite global dimension.

\begin{definition}
Let $B$ be a finite dimensional algebra over a field $\k$. We call $B$ {\em silted} if there is a finite dimensional hereditary algebra $H$ and a 2-term silting complex $\p\in K^b(\proj H)$ such that $B \cong \End_{D^b(H)}(\p)$.
Furthermore, the algebra $B$ is called quasi-silted if
$B \cong \End_{D^b(\H)}(\p)$ for a 2-term silting complex
$\p$ in the derived category of an $\Ext$-finite hereditary abelian category $\H$.
\end{definition}

In \cite{bz}, we studied torsion theories induced
by 2-term silting complexes, and generalized classical
results of Brenner-Butler \cite{bb} and Happel-Ringel \cite{hr}.
Here we apply these results to prove the following main result.

\begin{theorem}\label{main}
Let $A$ be a connected finite dimensional algebra over an
algebraically closed field $\k$. Then
\begin{itemize}
\item[(a)] $A$ is a strictly shod or a tilted algebra
if and only if it is a silted algebra.
\item[(b)] $A$ is a shod algebra if and only it is a quasi-silted algebra.
\end{itemize}
\end{theorem}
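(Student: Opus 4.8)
The plan is to treat the two implications of each part in turn, reducing everything to the torsion-theoretic results of \cite{bz} together with the homological bookkeeping available in a hereditary abelian category. First, \emph{(quasi-)silted algebras are (strictly) shod}. Assume $B\cong\End_{D^b(\H)}(\p)$ with $\p$ a $2$-term silting complex and $\H$ an $\Ext$-finite hereditary abelian category. As $\p$ is silting and $B$ is finite dimensional, $\Hom_{D^b(\H)}(\p,-)$ restricts to an equivalence from the heart $\A$ of the associated $t$-structure onto $\mod B$; by \cite{bz} and Section~\ref{sec:abel}, the hypothesis that $\p$ is $2$-term forces $\A$ to be the Happel--Reiten--Smal\o{} tilt of $\H$ at a torsion pair $(\T,\F)$, and since $\H$ is hereditary every $Z\in D^b(\H)$ is $\bigoplus_i H^i(Z)[-i]$, so each object of $\A$ splits as $H^{-1}(Z)[1]\oplus H^{0}(Z)$ with $H^{-1}(Z)\in\F$ and $H^{0}(Z)\in\T$; transporting the induced torsion pair $(\F[1],\T)$ along the equivalence yields a torsion pair $(\X,\Y)$ of $\mod B$. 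Two facts then follow. First, it is splitting: $\Ext^{1}_{B}(\Y,\X)\cong\Ext^{1}_{\A}(\T,\F[1])\cong\Hom_{D^b(\H)}(\T,\F[2])\cong\Ext^{2}_{\H}(\T,\F)=0$, so every indecomposable $B$-module lies in $\X$ or in $\Y$. Second, $\pd_{B}Y\le1$ for $Y\in\Y$ and $\id_{B}X\le1$ for $X\in\X$: for $M\in\T$ one has $\Ext^{2}_{\A}(M,T'\oplus F'[1])\cong\Ext^{2}_{\H}(M,T')\oplus\Ext^{3}_{\H}(M,F')=0$ for all $T'\in\T$, $F'\in\F$, and dually for $\F[1]$. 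Combining, every indecomposable $B$-module has projective or injective dimension $\le1$, so $B$ is shod, and the same computation gives $\gld B\le3$; this is the forward implication of (b). If moreover $\H=\mod H$ with $H$ finite dimensional hereditary, then $\gld B=3$ makes $B$ strictly shod by definition, while $\gld B\le2$ makes $B$ quasi-tilted, and then — $\mod B$ being an HRS-tilt of the module category of a genuine finite dimensional hereditary algebra, not of a category of coherent sheaves on a weighted projective line — Happel's classification of quasi-tilted algebras forces $B$ to be tilted; this is the forward implication of (a).

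Conversely, \emph{shod algebras are (quasi-)silted}. If $B$ is tilted, $B\cong\End_{H}(T)$ for a tilting module $T$ over a finite dimensional hereditary $H$, and a projective presentation $P^{-1}\to P^{0}$ of $T$ is a $2$-term tilting, hence silting, complex with endomorphism algebra $B$; so $B$ is silted. If $B$ is quasi-tilted but not tilted, the same argument applied to a tilting object in a category of coherent sheaves on a weighted projective line shows that $B$ is quasi-silted. The remaining and main case is $B$ strictly shod: here I would use the Reiten--Skowro\'nski description \cite{rs}, namely that the Auslander--Reiten quiver of $B$ contains a faithful double section $\Sigma$, which splits the indecomposables into a left part $\L$ (all of projective dimension $\le1$), the modules on $\Sigma$, and a right part $\R$ (all of injective dimension $\le1$). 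From $\Sigma$ one reads off a finite dimensional hereditary algebra $H$ — its quiver being the valued quiver underlying $\Sigma$, connectedness of $B$ allowing $H$ to be taken connected — together with a $2$-term complex $\p\in K^{b}(\proj H)$ whose direct summands are built from two consecutive slices of $D^{b}(H)$; one then checks that $\p$ is a $2$-term silting complex which is not a tilting complex, so $\End_{D^b(H)}(\p)$ is silted but not tilted, hence strictly shod by the forward direction, and a comparison of the two faithful double sections identifies $\End_{D^b(H)}(\p)\cong B$. Thus $B$ is silted, in particular quasi-silted; together with the quasi-tilted case this gives the backward implication of (b).

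The difficulty lies entirely in the strictly shod case of the converse. Unlike the quasi-tilted situation, $D^{b}(B)$ carries \emph{no} bounded $t$-structure with hereditary heart — that would force $\gld B\le2$ — so $H$ and $\p$ must be built from the outside, via the combinatorics of the faithful double section, and one must verify that the category so produced really is hereditary \emph{and} is the module category of a finite dimensional algebra (equivalently, that the relevant torsion pair is functorially finite). Concretely one needs a splitting torsion pair $(\X,\Y)$ of $\mod B$, extracted from $\Sigma$, with $\pd_{B}\Y\le1$, $\id_{B}\X\le1$ and the vanishings $\Ext^{2}_{B}(\X,\X)=\Ext^{2}_{B}(\Y,\Y)=\Ext^{3}_{B}(\X,\Y)=0$ that make the inverse tilt hereditary; of these, $\Ext^{2}_{B}(\X,\X)$ and $\Ext^{2}_{B}(\Y,\Y)$ vanish for cheap homological reasons, whereas $\Ext^{3}_{B}(\X,\Y)=0$ and the splitting property genuinely require the fine structure of the AR-quiver around $\Sigma$. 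Finally one must confirm that $\p$ is truly silting — vanishing of $\Hom_{D^b(H)}(\p,\p[i])$ for $i>0$ and generation of $K^{b}(\proj H)$ — which again traces back to the properties of that torsion pair. I expect this verification, and the precise construction of $H$ and $\p$ from $\Sigma$, to absorb most of the technical work, resting on \cite{bz} and \cite{rs}.
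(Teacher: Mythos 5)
Your forward directions (silted/quasi-silted $\Rightarrow$ shod, and the dichotomy between tilted and strictly shod via global dimension and Happel's classification of hereditary categories with tilting objects) are correct and broadly parallel to the paper's; the $\Ext$-vanishing computations you give are the same ones underlying Propositions~\ref{prop:basic} and \ref{prop:general}. The tilted and quasi-tilted cases of the converse are also handled in essentially the paper's way.

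The genuine gap is in the strictly shod case of the converse, which you explicitly leave as a sketch. You propose to \emph{read off} the hereditary algebra $H$ and the $2$-term complex $\p$ directly from the faithful double section $\Sigma$ of Reiten--Skowro\'nski, and then verify a posteriori that $\p$ is silting, that the relevant torsion pair is functorially finite, etc. You correctly identify all the verifications needed, but do not carry them out, and they are far from formal. The paper's route is different and avoids the AR-combinatorics entirely: it starts from a $2$-term silting complex $\p\in K^b(\proj A)$ over the shod algebra $A$ itself (its existence is Proposition~\ref{prop:air} applied to the functorially finite torsion pair $(\add(\R\setminus\L),\add\L)$, using \cite{chu}), sets $B=\End_{D^b(A)}(\p)$, proves that the Gabriel quiver $Q_B$ is acyclic and that $J_B$ is generated by paths from $V_R$ to $V_L$ (Lemmas~\ref{lem:pd2}--\ref{lem:ac}), takes $H=\k Q_B$, lifts $(\X(\p),\Y(\p))$ to a functorially finite torsion pair in $\mod H$, obtains a $2$-term silting complex $\rr\in K^b(\proj H)$ inducing it, and finally shows $\mod A\simeq\mod\End_{D^b(H)}(\rr)$ by matching the two split torsion pairs and the connecting $\Hom$-spaces. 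In fact, the paper \emph{deduces} the existence of the faithful double section as a corollary of this construction in Section~\ref{sec:double}, rather than using it as input. So your plan inverts the logical order of the paper's argument, and the hardest step — turning $\Sigma$ into a hereditary algebra and a genuine silting complex, and proving functorial finiteness — is left open. Until that construction is supplied, the proof of the backward implication is incomplete.
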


The paper is organized as follows.
We first recall some notation and facts concerning 2-term silting complexes and
induced torsion pairs.
Then in Section \ref{sec:silted-shod},
we prove part (a) of our main theorem.
In Section \ref{sec:double}
we provide a link to the classification of shod algebras by
Reiten and Skowro\'{n}ski.
Then we define 2-term silting complexes in bounded derived categories of $\Ext$-finite
abelian categories,
in Section \ref{sec:abel}, where
also part (b) of our main theorem is proved.
We conclude
with providing a small example to illustrate our main theorem. See \cite{ass} for the definition of
torsion pairs and other undefined notions for module categories. See \cite{h, hrs} for the definition
of t-structures and other undefined notions for
derived categories.
 
We would like to thank Steffen Oppermann and Dong Yang for
discussions related to this paper.

\section{Background and notation}\label{sec:silting}

In this section we fix notation and recall facts
concerning silting theory for 2-term
silting complexes. We refer to \cite{bz} for details.

In this paper, all modules are right modules.
Let $A$ be a finite dimensional algebra over a field $\k$. We denote by $\mod A$ the category of all finitely generated $A$-modules. A composition $fg$ of morphisms $f$ and $g$ means first $g$ and then $f$. But a composition $ab$ of arrows $a$ and $b$ means first $a$ then $b$. Under this setting, we have an equivalence from the category of all finite dimensional representations of a quiver $Q$ bounded by an admissible ideal $J$ to the category $\mod \k Q/J$, and a canonical isomorphism $A\cong \End_AA.$ For an $A$-module $M$, we let $\add M$ denote the full subcategory
of all direct summands in direct sums of copies of $M$, we let $\Fac M$ denote the full subcategory
of all modules which are factors of modules in $\add M$,
and we let $\Sub M$ denote the full subcategory
of all modules which are submodules of modules in $\add M$. Let $\rad M$ denote the
radical of a module $M$, and let $\soc M$ denote the socle of $M$.
We let $D= \Hom_{\k}(-,\k)$ denote the ordinary vector-space duality,
we let $\nu$ denote the Nakayama functor $\nu = D\Hom_A(-,A)$,
and we let $\tau_A$ denote the Auslander-Reiten translation in
$\mod A$. Note that the Nakayama functor induces an equivalence
$K^b(\proj A) \to K^b(\inj A)$, where $\proj A$ and $\inj A$ denote the
full subcategories of projectives and injectives in $\mod A$, respectively. Furthermore, we
have an isomorphism $$\Hom_{\DA}(\x,\nu \y) \cong D\Hom_{\DA}(\y,\x)$$
for $\x,\y$ in $\K$. 
Let $\U, \V$ be full subcategories of a triangulated
(or abelian) category $\W$. We let $\U \ast \V$ denote the full subcategory with objects 
 occurring as a middle terms of triangles (or short exact sequences) with left end terms in
$\U$ and right end terms in $\V$.

Consider a pair of indecomposable $A$-modules $X,Y$. If there exists a sequence of non-zero morphisms $X=X_0\xrightarrow{f_0}X_1\xrightarrow{f_1}X_2\xrightarrow{f_2}\cdots\rightarrow X_n\xrightarrow{f_n}  X_{n+1}=Y$ with $X_i$ indecomposable for $i=0,1,\cdots,{n+1}$, then we call $X$ a {\em predecessor} of $Y$, and $Y$ is called a {\em successor} of $X$.

Let $\p$ be a 2-term silting complex in $K^b(\proj A)$ for a finite dimensional $\k$-algebra $A$ and let $B = \End_{\DA}(\p)$.
We always assume $\p$ is basic, and that $\k$ is algebraically closed. Then
$B$ is a path algebra modulo an admissible ideal. Consider the subcategories
$$\T(\p) = \{X \in \mod A \mid \Hom_{\DA}(\p,X[1]) = 0 \}$$
and $$\F(\p) = \{X \in \mod A \mid \Hom_{\DA}(\p,X) = 0 \}.$$
Then $(\T(\p), \F(\p))$ is a torsion pair in $\mod A$.
Furthermore, the functors $\Hom_{\DA}(\p,-)$ restricted
to $\T(\p)$ and $\Hom_{\DA}(\p, -[1])$ restricted to $\F(\p)$ are both fully faithful and there is a 2-term silting complex $\q$ in
$D^b(\mod B)$, such that $\X(\p) \colon= \Hom_{\DA}(\p,\F(\p)[1])  =\T(\q)$ and $\Y(\p) \colon= \Hom_{\DA}(\p,\T(\p))
=\F(\q)$. We will refer to this fact, which is the main
result of \cite{bz}, as {\em the silting theorem}.

Note that $\Hom_{\DA}(\p, \nu \p)\cong D\Hom_{\DA}(\p, \p)$ is an injective cogenerator for $\mod B$.

The following facts from \cite{hkm} and \cite{bz} concerning the torsion pair $(\T(\p), \F(\p))$ are useful.

\begin{proposition}\label{prop:tf}
With notation as above, we have:
\begin{itemize}
  \item[(a)] $(\T(\p), \F(\p)) = (\Fac H^0(\p), \Sub H^{-1} (\nu\p))$;
\item[(b)] the modules in $\add H^0(\p)$ are the $\Ext$-projectives in $\T(\p)$;
\item[(c)] for each $X$ in $\T(\p)$, there is an exact sequence
$$0 \to L \to T_0 \to X \to 0$$
with $T_0$ in $\add H^0(\p)$ and $L$ in $\T(\p)$;
\item[(d)] the modules in $\add H^{-1}(\nu\p)$ are the $\Ext$-injectives in $\F(\p)$;
\item[(e)] for each $Y$ in $\F(\p)$, there is an exact sequence
$$0 \to Y \to F_0 \to L \to 0$$
with $F_0$ in $\add H^{-1}(\nu\p)$ and $L$ in $\F(\p)$.
\end{itemize}
\end{proposition}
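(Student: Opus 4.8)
The plan is to reduce the whole proposition to a handful of elementary $\Hom$-computations and then to read off the identification of the torsion pair from closure properties, rather than proving the inclusions directly. Writing $\p=(P^{-1}\xrightarrow{d}P^0)$ concentrated in degrees $-1,0$, the first step is to record the following isomorphisms, natural in $M\in\mod A$:
(i) $\Hom_{\DA}(\p,M)\cong\Hom_A(H^0(\p),M)$;
(ii) a monomorphism $\Ext^1_A(H^0(\p),M)\hookrightarrow\Hom_{\DA}(\p,M[1])$;
(iii) $\Hom_{\DA}(\p,M[1])\cong D\Hom_A(M,H^{-1}(\nu\p))$;
(iv) a monomorphism $\Ext^1_A(M,H^{-1}(\nu\p))\hookrightarrow D\Hom_{\DA}(\p,M)$.
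Here (i) is the computation of chain maps out of a two-term complex of projectives; (ii) comes from applying $\Hom_{\DA}(-,M[1])$ to the truncation triangle $H^{-1}(\p)[1]\to\p\to H^0(\p)\to H^{-1}(\p)[2]$; and (iii),(iv) are the duals, obtained from the Nakayama duality $\Hom_{\DA}(\x,\nu\y)\cong D\Hom_{\DA}(\y,\x)$ together with, respectively, the explicit shape of the two-term complex of injectives $(\nu\p)[-1]$ and the truncation triangle of $\nu\p$; throughout one uses that $\Hom_{\DA}(\p,-)$ and $\Hom_{\DA}(-,\nu\p)$ vanish on modules in homological degrees $\geq 2$. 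Unwinding definitions, (i) gives $\F(\p)=\{M\mid\Hom_A(H^0(\p),M)=0\}$ and (iii) gives $\T(\p)=\{M\mid\Hom_A(M,H^{-1}(\nu\p))=0\}$, while (ii),(iv) give $\Ext^1_A(H^0(\p),\T(\p))=0$ and $\Ext^1_A(\F(\p),H^{-1}(\nu\p))=0$. The only point where the silting hypothesis $\Hom_{\DA}(\p,\p[1])=0$ enters is to see that $H^0(\p)\in\T(\p)$: applying $\Hom_{\DA}(\p,-)$ to the truncation triangle of $\p$ squeezes $\Hom_{\DA}(\p,H^0(\p)[1])$ between $\Hom_{\DA}(\p,\p[1])=0$ and $\Hom_{\DA}(\p,H^{-1}(\p)[3])=0$.

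For part (a) I would argue as follows. Since $H^0(\p)\in\T(\p)$ we get $\Fac H^0(\p)\subseteq\T(\p)$, so by (ii) $\Ext^1_A(H^0(\p),\Fac H^0(\p))=0$; then for any short exact sequence $0\to Y_1\to Y\to Y_2\to0$ with $Y_1,Y_2\in\Fac H^0(\p)$ a surjection $H^0(\p)^{n}\twoheadrightarrow Y_2$ lifts to $Y$ and combines with a surjection $H^0(\p)^{m}\twoheadrightarrow Y_1$ to give $H^0(\p)^{m+n}\twoheadrightarrow Y$, so $\Fac H^0(\p)$ is closed under extensions, hence a torsion class, and by (i) the associated torsion-free class is exactly $\F(\p)$. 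As $(\T(\p),\F(\p))$ is a torsion pair (the silting theorem) and a torsion pair is determined by its torsion-free class, $\T(\p)=\Fac H^0(\p)$. Dually, $H^0(\p)\in\T(\p)$ together with (iii) and (i) gives $H^{-1}(\nu\p)\in\F(\p)$, hence $\Sub H^{-1}(\nu\p)\subseteq\F(\p)$; then (iv) gives $\Ext^1_A(\Sub H^{-1}(\nu\p),H^{-1}(\nu\p))=0$, the dual lifting argument shows $\Sub H^{-1}(\nu\p)$ is closed under extensions, hence a torsion-free class, and by (iii) its associated torsion class is $\T(\p)$; comparing with $(\T(\p),\F(\p))$ yields $\F(\p)=\Sub H^{-1}(\nu\p)$.

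For the remaining parts: in (c), the subcategory $\add H^0(\p)$ is contravariantly finite, so an $X\in\T(\p)=\Fac H^0(\p)$ admits a right $\add H^0(\p)$-approximation $T_0\to X$, which is an epimorphism because $X$ is generated by $H^0(\p)$; putting $L=\ker(T_0\to X)$ and applying $\Hom_{\DA}(\p,-)=\Hom_A(H^0(\p),-)$ to $0\to L\to T_0\to X\to0$, surjectivity of the approximation on $\Hom_A(H^0(\p),-)$ together with $\Hom_{\DA}(\p,T_0[1])=0$ forces $\Hom_{\DA}(\p,L[1])=0$, i.e. $L\in\T(\p)$. Statement (e) is the exact dual, built from a left $\add H^{-1}(\nu\p)$-approximation of $Y\in\F(\p)=\Sub H^{-1}(\nu\p)$ and using (iii) to identify the relevant connecting map. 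Then (b) follows: $\add H^0(\p)\subseteq\T(\p)$ consists of $\Ext$-projective objects by (ii), and conversely any $\Ext$-projective object of $\T(\p)$ splits off the sequence produced by (c) and so lies in $\add H^0(\p)$; and (d) is the dual of (b), using (iv) and (e).

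I expect the main obstacle to be getting the dual formulas (iii) and (iv) precisely right: keeping track of the Nakayama functor, the shift in $(\nu\p)[-1]$, and the direction of the duality $\Hom_{\DA}(\x,\nu\y)\cong D\Hom_{\DA}(\y,\x)$, since it is exactly these that make the ``$\F(\p)$''-half of the statement — and with it (d) and (e) — symmetric to the ``$\T(\p)$''-half. Once (i)--(iv) are in place everything else is formal manipulation of torsion pairs in the length category $\mod A$. (The proposition is recorded in \cite{hkm} and \cite{bz}; the above is one route to it.)
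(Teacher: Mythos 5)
Your argument is correct. Note, however, that the paper does not actually prove Proposition~\ref{prop:tf}; it records the statement as imported from \cite{hkm} and \cite{bz}. The closest thing to a proof in the paper itself is the abelian-category version in Section~\ref{sec:abel}: your $\Hom$-computations (i) and (ii) are precisely Lemma~\ref{lem:pre}(d), and the torsion-pair identification corresponds to Lemma~\ref{lem:tor}(d). The route taken there is a little different from yours: rather than showing $\Fac H^0(\p)$ is closed under extensions by a lifting argument and then matching torsion-free classes against the already-known torsion pair, the paper builds the canonical sequence $0\to tM\to M\to M/tM\to 0$ directly, taking $tM=\im g$ for a right $\add H^0(\p)$-approximation $g$, checking $tM\in\T(\p)$ from closure under factors and $M/tM\in\F(\p)$ from the factorization through $H^0(\p)$ in Lemma~\ref{lem:pre}(d). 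That version re-derives the torsion pair $(\T(\p),\F(\p))$ rather than assuming it, and $\T(\p)=\Fac H^0(\p)$ drops out at the end. What your approach buys is symmetry: by passing through the Nakayama functor you make the $\Sub H^{-1}(\nu\p)$ half of (a) and parts (d), (e) literal mirror images of the $\Fac H^0(\p)$ half and parts (b), (c). This shortcut is genuinely unavailable in the generality of Section~\ref{sec:abel}, where no Nakayama functor exists, which is why the abelian lemma is only stated on the $\T$-side. One small caveat to flag explicitly: in deriving (iii) and (iv) you apply $\Hom_{\DA}(\x,\nu\y)\cong D\Hom_{\DA}(\y,\x)$ with $\x=M[1]$, which is not in $K^b(\proj A)$, whereas the paper records the isomorphism only for $\x,\y\in K^b(\proj A)$; the extension to $\x\in D^b(A)$ with $\y\in K^b(\proj A)$ is standard but should be justified rather than cited from the text as written.
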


We shall also need the following facts concerning
$B= \End_{\DA}(\p)$ and the torsion pair
$(\X(\p),\Y(\p))$ in $\mod B$, in the case where $A$ is hereditary.

\begin{proposition}\label{prop:basic}
Let $H$ be a hereditary algebra, let $\p$ be a 2-term silting complex in $K^b(\proj H)$ and let $B=\End_{D^b(H)}(\p)$. Then
the following hold.
\begin{itemize}
  \item[(a)] The torsion pair $(\X(\p),\Y(\p))$ is split.
  \item[(b)] $\X(\p)$ is closed under successors and $\Y(\p)$ is closed under predecessors.
  \item[(c)] For any $X\in\X(\p)$ and $Y\in\Y(\p)$, we have $\pd Y_B\leq 1$ and $\id X_B\leq1$.
  \item[(d)] The global dimension of $B$ is at most 3.
  \item[(e)] Any almost split sequence in $\mod B$ lies entirely in either $\X(\p)$ or $\Y(\p)$, or else it is a connecting sequence
  \[0\rightarrow F(\nu P)\rightarrow F((\rad P)[1])\oplus F(\nu P/\soc(\nu P))\rightarrow \\ F(P[1])\rightarrow0,\]
    for $P$ indecomposable projective with $P\notin\add(\p\oplus\p[1])$, where $F=\Hom_{D^b(H)}(\p,-)$.
\end{itemize}
\end{proposition}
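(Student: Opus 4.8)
The plan is to exploit the silting theorem, which identifies $(\X(\p),\Y(\p))$ with the torsion pair $(\T(\q),\F(\q))$ for a 2-term silting complex $\q$ in $D^b(\mod B)$, together with the hereditariness of $H$. The global strategy is: first transport everything to the level of $H$, where modules are easy to control because $\proj H$-resolutions have length one; then read back the homological and combinatorial consequences on $\mod B$.

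For part (a), I would argue as follows. By Proposition \ref{prop:tf}(a), $\X(\p) = \T(\q) = \Fac H^0(\q)$ and $\Y(\p) = \F(\q) = \Sub H^{-1}(\nu\q)$. A torsion pair is split precisely when every indecomposable lies in one part or the other, equivalently $\Ext^1_B(\X(\p),\Y(\p)) = 0$. Using the fully faithful functors $\Hom_{\dh}(\p,-)\colon\T(\p)\to\Y(\p)$ and $\Hom_{\dh}(\p,-[1])\colon\F(\p)\to\X(\p)$ from the silting theorem, an extension of an object of $\X(\p)$ by one of $\Y(\p)$ corresponds to a morphism in $\dh$ between objects whose cohomologies sit in $\T(\p)$ and $\F(\p)[1]$ respectively; the relevant $\Hom$ or $\Ext$ group vanishes because $H$ is hereditary (so $\dh$ has no maps in degrees outside $\{0,1\}$ between modules, and $\Hom_{\dh}(\F(\p)[1],\T(\p)[1]) = \Ext^1_H(\F(\p),\T(\p))$ interacts trivially with the torsion pair structure). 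This is essentially the classical Brenner--Butler splitting argument in the silting setting, and \cite{bz} should supply the precise vanishing.

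Parts (b) and (c) I expect to follow from (a) by a now-standard predecessor/successor chase. Given (a), any indecomposable $B$-module $Y$ in $\Y(\p)$ has a projective cover whose kernel, being a submodule, must — by closure of $\Y(\p)$ under submodules (it is a torsionfree class) — lie in $\Y(\p)$; then one shows the projective itself is in $\Y(\p)$ or uses that $\Y(\p) = \F(\q)$ is the torsionfree class of the silting $\q$ in $D^b(\mod B)$, whence $\pd Y_B \le 1$ by the analogue of Proposition \ref{prop:tf}(a) applied to $\q$. Dually $\id X_B \le 1$ for $X \in \X(\p)$. Closure of $\X(\p)$ under successors: if $X \in \X(\p) = \Fac H^0(\q)$ and $X \to X'$ is a nonzero map with $X'$ indecomposable, decompose $X' = tX' \oplus$ (torsionfree part is wrong since pair need not split on that side)—rather, since the pair is split by (a), $X'$ is either in $\X(\p)$ or in $\Y(\p)$, and a nonzero map from a torsion object to a torsionfree object would have to be zero unless $X' \in \X(\p)$; iterating gives closure under successors, and dually $\Y(\p)$ is closed under predecessors. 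Part (d) is then immediate: any indecomposable is in $\X(\p)$ (so $\id \le 1$) or in $\Y(\p)$ (so $\pd \le 1$), and an algebra all of whose indecomposables have projective or injective dimension at most one has global dimension at most three by the Happel--Reiten--Smal{\o} result quoted in the introduction.

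For part (e), the description of almost split sequences, the plan is to use that $F = \Hom_{\dh}(\p,-)$ together with $\Hom_{\dh}(\p,-[1])$ give, via the silting theorem, a bijection between indecomposable $B$-modules and indecomposable summands appearing in $\T(\p) \sqcup \F(\p)[1]$, and that $F$ transports the AR-structure of $\dh$. Since $H$ is hereditary, $\dh$ has AR-triangles, and a 2-term complex $\p$ has $\add(\p\oplus\p[1])$ controlling which AR-triangles of $\dh$ restrict to genuine AR-sequences in $\mod B$: if neither end of an AR-triangle in $\dh$ hits $\add(\p\oplus\p[1])$ one gets an honest AR-sequence lying inside $\X(\p)$ or inside $\Y(\p)$; the exceptional "connecting" sequences arise exactly from the indecomposable projectives $P$ of $H$ with $P \notin \add(\p\oplus\p[1])$, whose AR-triangle in $\dh$ is $\nu P \to (\rad P)[1] \oplus \nu P/\soc(\nu P) \to P[1] \to$, and applying $F$ term by term yields the displayed short exact sequence. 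The main obstacle, and where I would spend the most care, is bookkeeping in this last part: verifying that $F$ really does send that particular triangle to a short exact sequence (i.e. that the connecting map becomes surjective with the stated kernel and that all three terms land in $\mod B$ rather than in a shift), and checking that these are the only sequences straddling the two halves of the split torsion pair. I would lean on the detailed computations of \cite{bz}, citing them for the transport-of-AR-structure statement and the identification of $\q$, rather than redo them here.
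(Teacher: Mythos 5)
Your outline follows the paper's architecture for parts (a), (b), (d), (e) — the paper itself cites \cite[Lemma~5.5, Proposition~5.6]{bz} for (a) and (e), derives (b) from (a) via the $\Hom(\X,\Y)=0$ vanishing exactly as you do, and gets (d) from (c) via \cite[Proposition~2.1.1]{hrs}. Two points, one cosmetic and one substantive.

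\textbf{A small but real error in (a).} You write that the split condition is equivalent to $\Ext^1_B(\X(\p),\Y(\p))=0$. This is the wrong direction: a torsion pair $(\T,\F)$ is split precisely when the canonical sequences $0\to tM\to M\to M/tM\to 0$ split, i.e.\ when $\Ext^1(\F,\T)=0$, so here the relevant vanishing is $\Ext^1_B(\Y(\p),\X(\p))=0$. (The vanishing of $\Ext^2_B(\X(\p),\Y(\p))$ is a genuinely different statement, proved later in Lemma~\ref{lem:splits} under extra hypotheses.) Since you end up deferring to \cite{bz} anyway, this doesn't sink the argument, but as written the Brenner--Butler analogy is pointed the wrong way.

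\textbf{A genuine gap in (c).} Your sketch has two problems. First, the projective cover chain does not get off the ground: for $Y\in\Y(\p)$ with projective cover $P\twoheadrightarrow Y$, the syzygy $\Omega Y$ is a submodule of $P$, not of $Y$, and $\Y(\p)$ is only closed under submodules of its own objects. One would need $P\in\Y(\p)$, which is precisely what you flag as ``one shows the projective itself is in $\Y(\p)$'' but do not show — and it is false in general, as $P$ can have summands in $\X(\p)$. Even if $\Omega Y$ did land in $\Y(\p)$, that would not yield $\pd Y\le 1$. Second, the fallback ``$\pd Y_B\le 1$ by the analogue of Proposition~\ref{prop:tf}(a) applied to $\q$'' is not a valid deduction: that statement says $\F(\q)=\Sub H^{-1}(\nu\q)$, which is a description of $\Y(\p)$ as a subcategory, not a bound on projective dimension. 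The paper's proof of (c) works on the $H$-side: it writes $Y=\Hom_{\dh}(\p,M)$ with $M\in\T(\p)$, uses Proposition~\ref{prop:tf}(c) to obtain $0\to L\to T_0\to M\to 0$ with $T_0\in\add H^0(\p)$ and $L\in\T(\p)$, and then exploits $\Ext^2_H(M,-)=0$ (hereditariness of $H$) together with $\Ext$-projectivity of $H^0(\p)$ in $\T(\p)$ to conclude $\Ext^1_H(L,\T(\p))=0$, hence $L\in\add H^0(\p)$. Since $H^0(\p)$ has a length-one projective resolution by $\p$ itself, both $\Hom_{\dh}(\p,T_0)$ and $\Hom_{\dh}(\p,L)$ are projective $B$-modules, and applying $\Hom_{\dh}(\p,-)$ gives the required projective resolution of $Y$ of length one. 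This transfer to $\mod H$, and the use of $\Ext^2_H=0$, is exactly the step your sketch is missing.
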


\begin{proof}
(a) This follows from \cite[Lemma~5.5]{bz}.

\noindent (b) This follows from (a), using the fact that there
are no non-zero maps from objects in $\X(\p)$
to objects in $\Y(\p)$.

\noindent (c) Let $Y$ be an object in $\Y(\p)$. By definition, there is an $H$-module $M\in\T(\p)$ such that $\Hom_{D^b(H)}(\p,M)=Y$. Then by Proposition~\ref{prop:tf}(c), there is an exact sequence
\[0\rightarrow L\rightarrow T_0\rightarrow M\rightarrow0\]
with $T_0\in\add H^0(\p)$ and $L\in\T(\p)$. Applying $\Hom_{D^b(H)}(\p,-)$, we have an exact sequence
\[0\rightarrow\Hom_{D^b(H)}(\p,L)\rightarrow\Hom_{D^b(H)}(\p,T_0)\rightarrow\Hom_{D^b(H)}(\p,M)\rightarrow0.\]
On the other hand, applying $\Hom_H(-,N)$ for any $N\in\T(\p)$ we have an exact sequence
\[\Ext_H^1(T_0,N)\rightarrow\Ext_H^1(L,N)\rightarrow\Ext_H^2(M,N)\]
where the first term is zero, using Proposition 
\ref{prop:tf}(b) and that $T_0$ is in $\add H^0(\p)$, and where the last term is zero since $H$ is hereditary.
So $\Ext_H^1(L,N)=0$ for any $N\in\T(\p)$,
which implies $L\in\add H^0(\p)$ by Proposition~\ref{prop:tf}(b).
Since $\pd H^0(\p)_H\leq 1$,
we have that $H^0(\p)$ is isomorphic to a direct summand of $\p$.
So both $\Hom_{D^b(H)}(\p,T_0)$ and $\Hom_{D^b(H)}(\p,L)$ \sloppy are projective, hence,
we have $\pd Y_B=\pd\Hom_{D^b(H)}(\p,M)_B\leq 1$. The proof for $X\in\X(\p)$ is similar by using Proposition~\ref{prop:tf}(d,e).

(d) This follows from (c), using \cite[Proposition~2.1.1]{hrs}.

(e) This follows from (a) and \cite[Proposition~5.6]{bz}.
\end{proof}

We also need the following observation which follows from \cite[Lemma 2.3]{air}.

\begin{lemma}\label{lem:summand}
For a projective $A$-module $P$, we have that
$P[1]$ is a direct summand in
$\p$ if and only if $\Hom_A(P, H^0(\p)) = 0$.
\end{lemma}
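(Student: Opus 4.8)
The plan is to read off the statement from the presilting condition $\Hom_{D^b(A)}(\p,\p[1])=0$ together with the standard identification of maps out of a shifted projective. Write $\p=(P^{-1}\xrightarrow{d}P^0)$ concentrated in degrees $-1$ and $0$, so $H^0(\p)=\coker d$. Since $\Hom_A(P,-)$ is exact for $P$ projective, the complex $\Hom_A(P,\p)$ has $H^0$ equal to $\coker\Hom_A(P,d)=\Hom_A(P,H^0(\p))$, and passing to the homotopy category gives the natural isomorphism
\[
\Hom_{D^b(A)}(P[1],\p[1])\;\cong\;\Hom_{D^b(A)}(P,\p)\;\cong\;\Hom_A\big(P,H^0(\p)\big),
\]
which is the computational input from \cite[Lemma~2.3]{air}. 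It suffices to treat the case where $P$ is indecomposable, the general statement following by decomposing $P$ into indecomposable summands (and using that $\p$ is basic).

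For the forward implication I would write $\p=\p'\oplus P[1]$. Then $H^0(\p)=H^0(\p')$ since $H^0(P[1])=0$, and $\Hom_{D^b(A)}(P[1],\p'[1])$ is a direct summand of $\Hom_{D^b(A)}(\p,\p[1])=0$; by the displayed isomorphism, $\Hom_A(P,H^0(\p))=\Hom_A(P,H^0(\p'))\cong\Hom_{D^b(A)}(P[1],\p'[1])=0$. For the converse, assume $\Hom_A(P,H^0(\p))=0$ and consider the $2$-term complex $\p\oplus P[1]$. I would check it is presilting: the only new $\Hom$-vanishings needed are $\Hom_{D^b(A)}(P[1],\p[1])\cong\Hom_A(P,H^0(\p))=0$ by the displayed isomorphism, $\Hom_{D^b(A)}(\p,P[2])=0$ because $\p$ is concentrated in degrees $-1,0$, and $\Hom_{D^b(A)}(P[1],P[2])=\Ext_A^1(P,P)=0$ since $P$ is projective. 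Now a $2$-term presilting complex over $A$ has at most $\rk K_0(K^b(\proj A))$ pairwise non-isomorphic indecomposable summands, a bound already attained by the silting complex $\p$; hence $P[1]$ must be isomorphic to one of the indecomposable summands of $\p$.

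There is no real obstacle here: the whole argument rests on the two cited facts from \cite{air}, namely the isomorphism $\Hom_{D^b(A)}(P[1],\p[1])\cong\Hom_A(P,H^0(\p))$ and the upper bound on the number of indecomposable summands of a $2$-term presilting complex (equivalently, the fact that a presilting complex extending a silting complex adds nothing new). Given these, both implications reduce to a short formal manipulation of $\Hom$-spaces in $K^b(\proj A)$, so I would keep the write-up brief and refer to \cite[Lemma~2.3]{air} for the one genuine computation.
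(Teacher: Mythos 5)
The paper proves this lemma only by citing \cite[Lemma~2.3]{air}, so there is no detailed argument to compare against; yours fills in what the citation leaves implicit, and it is correct. The forward implication follows immediately from the Hom-isomorphism $\Hom_{D^b(A)}(P,\p)\cong\Hom_A(P,H^0(\p))$ for projective $P$, which is the computational content of the cited lemma; for the converse you additionally invoke the maximality of a silting complex among 2-term presilting complexes (the summand-count bound), which is a genuine extra input beyond the cited lemma but a standard one. One minor interpretive point: the statement should be read with ``$P[1]$ is a direct summand of $\p$'' meaning $P[1]\in\add\p$ (this is how the paper actually uses it in Lemma~\ref{lem:pd2}), since for a non-basic $P$ with a repeated indecomposable summand the literal statement would fail against a basic $\p$; your reduction to indecomposable $P$ is exactly the right move under that reading.
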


We recall the notion of (strictly) shod algebras.

\begin{definition}[\cite{cl}]
An algebra $A$ is called shod if for each indecomposable $A$-module $X$, either $\pd X_A\leq1$ or $\id X_A\leq 1$. Also, $A$ is called strictly shod if $A$ is shod and $\gld A=3$.
\end{definition}

Let $\L$ be the set of the indecomposable $A$-modules $Y$ such that for all
predecessors $X$ of $Y$, we have that $\pd X_A\leq1$ and $\R$ be the set of the indecomposable $A$-modules $X$ such that for all
successors $Y$ of $X$, we have that $\id Y_A\leq1$. We recall some equivalent characterizations of shod algebras.

\begin{proposition}[\cite{cl}]\label{prop:CL}
The following are equivalent for an algebra $A$:
\begin{itemize}
  \item[(a)] $A$ is a shod algebra;
  \item[(b)] $(\add\R,\add(\L\setminus\R))$ is a split torsion pair in $\mod A$;
  \item[(c)] $(\add(\R\setminus\L),\add\L)$ is a split torsion pair in $\mod A$;
  \item[(d)] there exists a split torsion pair $(\T,\F)$ in $\mod A$ such that $\pd Y_A\leq1$ for each $Y\in\F$ and $\id X_A\leq1$ for each $X\in\T$.
\end{itemize}
\end{proposition}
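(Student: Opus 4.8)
The plan is to establish the four equivalences along the cycle (a)$\Rightarrow$(b)$\Rightarrow$(d)$\Rightarrow$(a) together with (a)$\Rightarrow$(c)$\Rightarrow$(d), deducing (a)$\Rightarrow$(c) from (a)$\Rightarrow$(b) by duality; the content is concentrated in (a)$\Rightarrow$(b), and everything else is formal. First one records two elementary facts: the class $\L$ is closed under predecessors and $\R$ under successors (immediate from the definitions, by concatenating paths), and every indecomposable module is at once a predecessor and a successor of itself (via its identity morphism), so that $\pd Y_A\le1$ for every $Y\in\L$ and $\id X_A\le1$ for every $X\in\R$. Granting this, (d)$\Rightarrow$(a) is immediate: in a split torsion pair $(\T,\F)$ each indecomposable lies in $\T$ or in $\F$, hence has injective or projective dimension at most one. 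For (b)$\Rightarrow$(d) one takes $(\T,\F)=(\add\R,\add(\L\setminus\R))$, which is split by hypothesis and satisfies the homological conditions because $\R$ consists of modules of injective dimension at most one and $\L\setminus\R\subseteq\L$ of modules of projective dimension at most one; likewise (c)$\Rightarrow$(d) uses $(\add(\R\setminus\L),\add\L)$. Finally, the duality $D=\Hom_\k(-,\k)\colon\mod A\to\mod A^{\op}$ exchanges projective with injective dimension and reverses the predecessor relation, hence carries $\L_A$ to $\R_{A^{\op}}$ and $\R_A$ to $\L_{A^{\op}}$, and sends a split torsion pair of $\mod A$ to one of $\mod A^{\op}$; under this, statement (c) for $A$ becomes statement (b) for $A^{\op}$, and $A$ is shod if and only if $A^{\op}$ is, so (a)$\Rightarrow$(c) follows once (a)$\Rightarrow$(b) is known.

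It therefore remains to prove (a)$\Rightarrow$(b), so assume $A$ is shod. One first checks $\Hom_A(\add\R,\add(\L\setminus\R))=0$: a nonzero morphism $X\to Y$ between indecomposables with $X\in\R$ makes $Y$ a successor of $X$, whence $Y\in\R$ and $Y\notin\L\setminus\R$. The crucial input is the following reformulation of the shod property, due to Coelho--Lanzilotta: \emph{$A$ is shod if and only if every indecomposable $A$-module lies in $\L\cup\R$} (the ``if'' part being trivial from the previous paragraph). Granting the ``only if'' direction, each indecomposable is in $\R$ or in $\L\setminus\R$; declaring the torsion part of a module to be the sum of its indecomposable summands lying in $\R$ and the torsion-free part the sum of those lying in $\L\setminus\R$ gives, for every module $N$, a (split) short exact sequence $0\to N'\to N\to N''\to0$ with $N'\in\add\R$ and $N''\in\add(\L\setminus\R)$. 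Since $\add\R$ and $\add(\L\setminus\R)$ are closed under direct summands, satisfy $\Hom_A(\add\R,\add(\L\setminus\R))=0$, and admit these sequences, the pair $(\add\R,\add(\L\setminus\R))$ is a torsion pair: indeed $\add\R={}^{\perp}(\add(\L\setminus\R))$ and $\add(\L\setminus\R)=(\add\R)^{\perp}$ follow by composing the above sequence of an object with its surjection onto, respectively its injection from, the relevant term. And it is split since its two classes already exhaust the indecomposables.

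The one remaining point, which I expect to be the main obstacle, is the ``only if'' direction of the reformulation. Let $M$ be indecomposable with $M\notin\L$; we must show $M\in\R$. By definition of $\L$ there is a predecessor $U$ of $M$ with $\pd U_A\ge2$, whence $\id U_A\le1$ by the shod property and $\gld A\le3$ by Happel--Reiten--Smal{\o} \cite{hrs}. The key lemma --- the technical heart of \cite{cl} --- is that, for $A$ shod, every successor of an indecomposable of projective dimension at least two has injective dimension at most one; equivalently, a path of nonzero morphisms between indecomposables issuing from a module of projective dimension at least two stays inside the class of modules of injective dimension at most one. Applied to $U$, this shows that every successor of $M$, being a successor of $U$, has injective dimension at most one, that is $M\in\R$. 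I expect proving this lemma to absorb most of the work: it is established by induction along the path, the induction step having to exclude a nonzero morphism from a module $W$ of injective dimension at most one lying on such a path to a module of injective dimension at least two and hence, by the shod dichotomy, of projective dimension at most one; ruling this out rests on $\gld A\le3$ together with Auslander--Reiten theory (via characterisations such as $\pd W_A\le1\iff\Hom_A(DA,\tau W)=0$ and $\id W_A\le1\iff\Hom_A(\tau^{-1}W,A)=0$) chased through the almost split sequences near $W$, for which we refer to \cite{cl}. Combined with the formal steps above, this yields all the stated equivalences.
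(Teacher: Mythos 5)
The paper offers no proof of this proposition: it is stated with the citation \cite{cl} and used as a black box, so there is no ``paper's own proof'' to compare against. What you have produced is a reconstruction of the argument from \cite{cl}, and it is structurally sound. The formal reductions are all correct: $\L$ is closed under predecessors and $\R$ under successors directly from the definitions; every $Y\in\L$ has $\pd Y\le1$ and every $X\in\R$ has $\id X\le1$ because each module is its own (co)path endpoint; (b)$\Rightarrow$(d) and (c)$\Rightarrow$(d) then follow immediately, and (d)$\Rightarrow$(a) is trivial from splitness. Your torsion-pair verification for (b) given the dichotomy $\ind A=\L\cup\R$ is also correct --- $\Hom(\add\R,\add(\L\setminus\R))=0$ follows from $\R$ being closed under successors, and the split canonical sequences come from decomposing each module into indecomposable summands and sorting them into $\R$ versus $\L\setminus\R$. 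The duality passage from (a)$\Rightarrow$(b) to (a)$\Rightarrow$(c) is likewise fine, since $D$ interchanges $\L_A$ with $\R_{A^{\op}}$, $\pd$ with $\id$, and preserves the shod property.

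The one place where the substance lies is, as you correctly identify, the dichotomy $\ind A=\L\cup\R$ for shod $A$, equivalently the nonexistence of a path of nonzero maps from an indecomposable of projective dimension $\ge2$ to one of injective dimension $\ge2$. You state this as a lemma and defer its proof to \cite{cl}, with a one-sentence sketch of the AR-theoretic induction. Given that the paper itself treats the whole proposition as a citation, deferring this technical core is entirely consistent; just be aware that this is precisely where all the work of Coelho--Lanzilotta is concentrated, so your proposal is a correct skeleton with the hard bone still borrowed from the source.
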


\section{Silted algebras and shod algebras}\label{sec:silted-shod}

Note that by Proposition~\ref{prop:basic}(a,c),
any silted algebra is shod.
In this section we prove that a finite dimensional algebra
over an algebraically closed field is silted if and only
if it is strictly shod or tilted.
The following will be the key fact for our proof.

\begin{proposition}\label{prop:key}
If there is a 2-term silting complex $\p\in K^b(\proj A)$
such that $(\T(\p),\F(\p))$ satisfies condition (d) in Proposition~\ref{prop:CL}, then $A$ is a silted algebra.
\end{proposition}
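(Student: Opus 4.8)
The plan is to realise $A$ as the endomorphism algebra of a 2-term silting complex over a hereditary algebra, and the natural candidate for that hereditary algebra is $B := \End_{\DA}(\p)$. Applying the silting theorem to $\p$ over $A$ yields a 2-term silting complex $\q$ in $D^b(\mod B)$ with $\T(\q) = \X(\p)$ and $\F(\q) = \Y(\p)$, together with equivalences $\T(\p) \simeq \Y(\p)$ (via $\Hom_{\DA}(\p,-)$) and $\F(\p) \simeq \X(\p)$ (via $\Hom_{\DA}(\p,-[1])$); by the symmetry of this construction (\cite{bz}) one moreover has $\End_{D^b(\mod B)}(\q) \cong A$. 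Hence it suffices to prove that $B$ is hereditary: then $A \cong \End_{D^b(\mod B)}(\q)$ exhibits $A$ as a silted algebra, $\q$ being (up to isomorphism, since $B$ has finite global dimension) a 2-term silting complex in $K^b(\proj B)$.

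To prove $\gld B \le 1$ I would show $\Ext_B^2(M,N) = 0$ for all $B$-modules $M, N$. Since $(\X(\p),\Y(\p))$ is a torsion pair in $\mod B$, applying the long exact sequences for $\Ext_B^2$ in each variable reduces this to the four cases in which $M$ and $N$ each lie in $\X(\p)$ or in $\Y(\p)$. The crucial bounds are (i) $\pd_B Y \le 1$ for every $Y \in \Y(\p)$, and (ii) $\id_B X \le 1$ for every $X \in \X(\p)$: these give $\Ext_B^2(Y_1,Y_2) = \Ext_B^2(Y,X) = 0$ from (i) and $\Ext_B^2(X_1,X_2) = 0$ from (ii); the remaining case $\Ext_B^2(X,Y) = 0$ for $X \in \X(\p)$, $Y \in \Y(\p)$ comes from the Brenner--Butler-type analysis of \cite{bz} together with the hypotheses $\pd_A N' \le 1$ for $N' \in \F(\p)$ and $\id_A N'' \le 1$ for $N'' \in \T(\p)$, which force the relevant second-extension groups over $A$ to vanish.

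For bound (i), write $Y = \Hom_{\DA}(\p,M)$ with $M \in \T(\p)$. Proposition~\ref{prop:tf}(c) provides an exact sequence $0 \to L \to T_0 \to M \to 0$ in $\T(\p)$ with $T_0 \in \add H^0(\p)$. Because $\id_A N'' \le 1$ for all $N'' \in \T(\p)$ one has $\Ext_A^2(M,N'') = 0$, which together with $\Ext_A^1(T_0,N'') = 0$ (Proposition~\ref{prop:tf}(b)) forces $\Ext_A^1(L,N'') = 0$ for all $N'' \in \T(\p)$, so $L \in \add H^0(\p)$ by Proposition~\ref{prop:tf}(b); applying $\Hom_{\DA}(\p,-)$, which is exact on $\T(\p)$, gives a short exact sequence $0 \to \Hom_{\DA}(\p,L) \to \Hom_{\DA}(\p,T_0) \to Y \to 0$. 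Bound (ii) is dual: one uses Proposition~\ref{prop:tf}(e) instead of (c), $\add H^{-1}(\nu\p)$ instead of $\add H^0(\p)$, the hypothesis $\pd_A N' \le 1$ on $\F(\p)$ instead of the id-hypothesis, and the exactness of $\Hom_{\DA}(\p,-[1])$ on $\F(\p)$, to obtain $0 \to X \to \Hom_{\DA}(\p,F_0[1]) \to \Hom_{\DA}(\p,L'[1]) \to 0$ with $F_0, L' \in \add H^{-1}(\nu\p)$.

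What makes these into a genuine projective resolution of $Y$ and a genuine injective copresentation of $X$ — and what I expect to be the main obstacle — is the assertion that $\Hom_{\DA}(\p,-)$ carries $\add H^0(\p)$ into projective $B$-modules and $\Hom_{\DA}(\p,-[1])$ carries $\add H^{-1}(\nu\p)$ into injective $B$-modules; it is precisely here that the 2-term hypothesis on $\p$ enters. Since $\p$ has nonzero terms only in degrees $0$ and $-1$, one has $\Hom_{\DA}(\p,Z[i]) = 0$ for every module $Z$ and every $i \ge 2$; applying $\Hom_{\DA}(\p,-)$ to the truncation triangle $H^{-1}(\p)[1] \to \p \to H^0(\p) \to H^{-1}(\p)[2]$ then presents $\Hom_{\DA}(\p,H^0(\p))$ as a quotient of $B = \Hom_{\DA}(\p,\p)$, and Lemma~\ref{lem:summand} — which handles the summands $P[1]$ of $\p$ — together with a short computation identifies this quotient with a projective $B$-module. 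The injective statement follows by applying the Nakayama functor and the duality $\Hom_{\DA}(\x,\nu\y) \cong D\Hom_{\DA}(\y,\x)$. With (i), (ii) and the vanishing of $\Ext_B^2(X,Y)$ in hand, $\gld B \le 1$, and the argument is complete.
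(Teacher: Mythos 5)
Your strategy---realize $A$ as the endomorphism algebra of a 2-term silting complex over a hereditary algebra obtained from $B=\End_{\DA}(\p)$, using the silting theorem and the fact (via \cite[Proposition~5.7]{bz}) that $\p$ is tilting---is the right shape of argument, but the key claim on which it rests is false: $B$ is \emph{not} hereditary in general.

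What is true, and what the paper proves, is that the Gabriel quiver $Q_B$ is acyclic (Lemma~\ref{lem:ac}); but the admissible ideal $J_B$ is generated by the paths from $V_R$ to $V_L$, and these paths need not vanish, since e.g.\ a composite $\p_R\to\p_M\to\p_L$ of nonzero irreducible maps lands in $\Hom_\DA(\p_R,\p_L)=0$. Thus $B\cong\k Q_B/J_B$ may have relations, and the correct hereditary algebra is $H=\k Q_B$, not $B$. The remaining work in the paper's proof is precisely to transport the torsion pair $(\X(\p),\Y(\p))$ from $\mod B$ to $\mod H$ (using the explicit description of $J_B$), check functorial finiteness there, obtain a 2-term silting complex $\rr$ over $H$ by Proposition~\ref{prop:air}, and then reconstruct $\mod A$ from the two split torsion pairs together with a comparison of the glueing $\Hom$-groups. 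Your proposal omits this entire passage from $B$ to $H$.

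The concrete step in your argument that fails is the assertion that $\Hom_\DA(\p,H^0(\p))$ is a projective $B$-module. Applying $\Hom_\DA(\p,-)$ to the truncation triangle does present it as a quotient of $B$, but a quotient of a projective need not be projective, and the map does not split in general. In Proposition~\ref{prop:basic}(c) the projectivity is obtained because $H$ is hereditary, hence $\pd_H H^0(\p)\le 1$ and $H^0(\p)$ is literally a direct summand of $\p$; here $A$ is arbitrary, and Lemma~\ref{lem:pd2} only gives $\pd_A H^0(\p)\le 2$ (the kernel of the projective cover lies in $\F(\p)$, so $H^{-1}(\p)$ is projective---but $H^0(\p)$ still has a two-step resolution). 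Consequently your bounds (i) $\pd_B Y\le1$ and (ii) $\id_B X\le1$ are not established, and indeed they fail whenever $J_B\ne 0$. What one can show (and what the paper shows in Lemma~\ref{lem:here}) is that the \emph{factor algebras} $\End_A(H^0(\p))$ and $\End_A(H^{-1}(\nu\p))$ are hereditary; this yields the acyclicity of $Q_B^{MR}$ and $Q_B^{LM}$ but not of $B$ itself. To repair your argument you would need to replace $B$ by $H=\k Q_B$ throughout, and then supply the transport-of-torsion-pair and reconstruction-of-$\mod A$ steps that the paper carries out.
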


The proof of this will follow after a series of lemmas.
For this we now fix an algebra $A$ and assume the existence of a
2-term silting complex $\p\ =\left(P^{-1}\xrightarrow{p}P^0\right) \in K^b(\proj A)$, such that
$(\T(\p),\F(\p))$
satisfies condition (d) in Proposition~\ref{prop:CL}. Then, in particular, $A$ is a shod algebra. Let $B=\End_\DA(\p) \cong \k Q_B/J_B$, where $J_B$ is an admissible ideal.
We aim to prove that $Q_B$, the Gabriel quiver of $B$,
is acyclic. Then we show that the corresponding hereditary
algebra $H =\k Q_B$ admits a 2-term silting complex, such that $A$
is isomorphic to its endomorphism ring. This is our strategy for proving
Proposition \ref{prop:key}.

In order to describe $Q_B$ and $J_B$ we need first to consider two
factor algebras of $B$, namely $\End_A(H^0(\p))$ and $\End_A(H^{-1}(\nu \p))$.

Let $\p=\p_L \oplus \p_M \oplus \p_R$, where $\p_L$ is the direct sum of the indecomposable direct summands of $\p$ with zero $0$th term and $\p_R$ is the direct sum of the indecomposable direct summands of $\p$ with zero $-1$th term. We shall need the following lemma.

\begin{lemma}\label{lem:pd2}
$H^{-1}(\p)$ is projective and $H^{-1}(\p)[1]\in\add\p_L$. Dually, $H^0(\nu \p)$ is injective and $H^0(\nu \p)\in\add\nu\p_R$.
\end{lemma}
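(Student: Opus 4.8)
The plan is to analyze the complex $\p = (P^{-1}\xrightarrow{p}P^0)$ degreewise and use the long exact cohomology sequence together with heredity-type arguments coming from Proposition~\ref{prop:tf}. First I would write down the cohomology of $\p$: from the two-term complex we get $H^{-1}(\p) = \ker p$ and $H^0(\p) = \coker p$, and a short exact sequence $0 \to \ker p \to P^{-1} \xrightarrow{p} P^0 \to \coker p \to 0$ in $\mod A$. The key point is that $A$ is shod, so every indecomposable summand of the submodule $H^{-1}(\p) = \ker p \subseteq P^{-1}$ has either projective dimension at most one or injective dimension at most one; but it is a submodule of a projective, hence its injective dimension being at most one would be the delicate case. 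The cleaner route is to observe that $H^{-1}(\p)$ is a second syzygy: from $0 \to \ker p \to P^{-1}$ we see $\ker p$ is a first syzygy of $\im p$, and from $0 \to \im p \to P^0 \to \coker p \to 0$ we see $\im p$ is a first syzygy of $H^0(\p)$. Hence $\pd (H^{-1}(\p)) \leq \pd (H^0(\p)) - 2$ whenever the latter is finite and large enough; in any case $\Ext^i_A(H^{-1}(\p), -) \cong \Ext^{i+2}_A(H^0(\p), -)$ for $i \geq 1$. I would use this to force $H^{-1}(\p)$ to be projective.

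The main step is therefore to show $\pd H^{-1}(\p)_A = 0$. Here I would use the shod hypothesis via Proposition~\ref{prop:CL}(d): the torsion pair $(\T(\p), \F(\p))$ is split with $\pd Y_A \leq 1$ for $Y \in \F(\p)$ and $\id X_A \leq 1$ for $X \in \T(\p)$. By Proposition~\ref{prop:tf}(a), $H^{-1}(\p)$ lies in... well, one must be careful: $H^{-1}(\p)$ need not itself be torsion or torsionfree, but each indecomposable summand of it is one or the other since the torsion pair is split. If an indecomposable summand $X$ of $H^{-1}(\p)$ lies in $\T(\p)$, then $\id X_A \leq 1$; combined with $X$ being a second syzygy (so $\Ext^1_A(X,-)\cong \Ext^3_A(H^0(\p),-)$ up to summands, and $\gld A \leq 3$) one squeezes out that $X$ is projective. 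If instead $X$ lies in $\F(\p)$, then $\pd X_A \leq 1$, but $X$ is a submodule of the projective $P^{-1}$; a submodule of a projective over a shod algebra which also has $\pd \leq 1$... here I would invoke that $\Omega X$ is projective and $X \hookrightarrow P^{-1}$ forces $X$ projective by a depth/syzygy argument, or more directly argue via $\Ext^2_A(H^0(\p), -)$ vanishing. Once $H^{-1}(\p)$ is projective, the inclusion $H^{-1}(\p) = \ker p \hookrightarrow P^{-1}$ together with the fact that $\p$ is a two-term silting complex (so in particular $\Hom_{\DA}(\p, \p[1]) = 0$, which controls the component of $p$) shows that the direct summand $H^{-1}(\p)[1]$ splits off $\p$, i.e. $H^{-1}(\p)[1] \in \add \p_L$ by Lemma~\ref{lem:summand} applied to each projective summand of $H^{-1}(\p)$: indeed $\Hom_A(H^{-1}(\p), H^0(\p)) = 0$ because any such map would contradict $\Hom_{\DA}(\p,\p[1])=0$ (a map $\ker p \to \coker p$ lifts to a degree-$1$ chain map $\p \to \p$).

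The dual statement, that $H^0(\nu\p)$ is injective with $H^0(\nu\p) \in \add \nu\p_R$, follows by applying the Nakayama-functor duality: $\nu\p \in K^b(\inj A)$ is a two-term silting object with terms in degrees $0$ and $-1$, namely $\nu P^{-1} \to \nu P^0$, and $H^0(\nu\p) = \coker(\nu p)$ is a quotient of the injective $\nu P^0$. Running the argument above in the opposite category $\mod A^{\op}$ (where shod is a self-dual property and the silting complex $\nu\p$ corresponds to $D(\p)$ over $A^{\op}$), one gets that $H^0(\nu\p)$ is injective and splits off as claimed; alternatively, apply $\Hom_A(-,I)$-style syzygy arguments directly and use $\id H^{-1}(\nu\p)_A \leq 1$ from Proposition~\ref{prop:tf}(d). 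The hard part I expect is the squeezing argument in the middle: turning "second syzygy, injective dimension $\leq 1$, global dimension $\leq 3$" (or the torsionfree, $\pd \leq 1$, submodule-of-projective case) into "projective" cleanly, without circularity against the shod hypothesis — this is where one must be most careful about which indecomposable summand lands in which part of the split torsion pair, and I would handle the two cases separately as sketched, keeping track of the identifications $\Ext^{i}_A(H^{-1}(\p), N) \cong \Ext^{i+2}_A(H^0(\p), N)$ coming from the two short exact sequences above.
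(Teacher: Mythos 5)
There is a genuine gap in the first (and harder) half of your argument. You decompose $H^{-1}(\p)$ into indecomposable summands lying in $\T(\p)$ or in $\F(\p)$ and try to ``squeeze'' projectivity out of $\pd\leq 1$ or $\id\leq 1$ plus the fact that $H^{-1}(\p)$ is a second syzygy; but this squeeze simply does not close. A summand in $\F(\p)$ has $\pd\leq 1$ and sits inside the projective $P^{-1}$, yet a submodule of a projective with projective dimension one need not be projective over a shod algebra (e.g.\ take $\k A_4$ with linear orientation and one zero relation in the middle; the simple at the third vertex has $\pd=1$ and is the socle of $P_1$ but is not projective). Likewise, for a summand in $\T(\p)$, $\id\leq 1$ together with the second-syzygy identities gives at best $\pd\leq 1$, not $\pd=0$. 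The idea you are missing is to apply the split torsion pair not to $H^{-1}(\p)=\Omega^2 H^0(\p)$ but to the \emph{first} syzygy $K=\im p=\ker c_p$, where $c_p\colon P^0\to H^0(\p)$ is the projective cover (this uses minimality of $\p$). Because $H^0(\p)$ is $\Ext$-projective in $\T(\p)$ by Proposition~\ref{prop:tf}(b) and $K\subseteq\rad P^0$, no nonzero direct summand of $K$ can lie in $\T(\p)$; since $(\T(\p),\F(\p))$ splits, $K\in\F(\p)$, so $\pd K\leq 1$ by hypothesis, and therefore $H^{-1}(\p)=\Omega K$ is projective. This is the step that actually forces projectivity, and it is entirely absent from your sketch.

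Your treatment of the second half (showing $H^{-1}(\p)[1]\in\add\p_L$) does go through the correct reduction to $\Hom_A(H^{-1}(\p),H^0(\p))=0$ via Lemma~\ref{lem:summand}, but the justification offered --- that a nonzero map $\ker p\to\coker p$ ``lifts to a degree-1 chain map $\p\to\p$'' --- is not valid as stated: such a lift would require extending a map $H^{-1}(\p)\to P^0$ along the inclusion $H^{-1}(\p)\hookrightarrow P^{-1}$, and that extension exists precisely when this inclusion splits, which is what we are trying to prove. The paper avoids this circularity by applying $\Hom_{\DA}(-,H^0(\p)[1])$ to the truncation triangle $H^{-1}(\p)[1]\to\p\to H^0(\p)\to H^{-1}(\p)[2]$ and using that $\Hom_{\DA}(\p,H^0(\p)[1])=0$ (since $H^0(\p)\in\T(\p)$) together with $\Ext^2_A(H^0(\p),H^0(\p))=0$ (since $\id H^0(\p)\leq 1$). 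You should replace your lifting argument with that long-exact-sequence computation.
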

\begin{proof}
We prove the first part. The proof of the second part is similar. Since $\p$ is given by the map $P^{-1}\xrightarrow{p} P^0$,
we have an exact sequence \[0\to H^{-1}(\p)\to P^{-1}\xrightarrow{p} P^0\xrightarrow{c_p} H^{0}(\p)\to 0.\]
By definition, the map $c_p$ is a projective cover of $H^0(\p)$.
Let $K=\ker c_p$.
By Proposition \ref{prop:tf}(b),
$H^0(\p)$ is Ext-projective in $\T(\p)$.
Therefore any direct summand of $K$ is not in $\T(\p)$,
and since $(\T(\p),\F(\p))$ is split, by the assumption on $\p$, we have that $K$ belongs to $\F(\p)$.
So $\pd K_A \leq 1$ by the assumption, and hence
$H^{-1}(\p)$ is projective.

We now prove that $H^{-1}(\p)[1]$ is in $\add \p_L$.
By Lemma \ref{lem:summand}, it is \sloppy sufficient to prove that $\Hom_A(H^{-1}(\p), H^0(\p))= 0$.
Applying $\Hom_\DA(-,H^0(\p))$ to the triangle
\[H^{-1}(\p)[1]\to\p\to H^0(\p)\to H^{-1}(\p)[2], \]
we have an exact sequence
\[\Hom_\DA(\p,H^0(\p)[1])\to \Hom_\DA(H^{-1}(\p)[1],H^0(\p)[1])\to \Hom_\DA(H^{0}(\p),H^0(\p)[2]),\]
where the first term is zero by $H^0(\p)\in\T(\p)$ and the third term is zero by $\id H^0(\p)_A\leq 1$. Then we have $\Hom_A(H^{-1}(\p), H^0(\p))= 0$.
\end{proof}

By this we get the following
relations between the algebras $\End_A(H^0(\p))$, $\End_A(H^{-1}(\nu \p))$ and $\End_\DA(\p)$.

\begin{lemma}\label{lem:surj}
The functor $H^0(-)$ gives a surjective homomorphism of algebras
\[H^0(-)\colon \End_\DA(\p)\rightarrow\End_A(H^0(\p))\]
whose kernel is the space consisting of morphisms which factor through $\add \p_L$. The functor $H^{-1}(\nu-)$ gives a surjective homomorphism of algebras
\[H^{-1}(\nu-)\colon \End_\DA(\p)\rightarrow\End_A(H^{-1}(\nu \p))\]
whose kernel is the space consisting of morphisms which factor through $\add \p_R$.
\end{lemma}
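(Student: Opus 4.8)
The plan is to realize each of the two algebra homomorphisms as a connecting map in a long exact sequence coming from a standard truncation triangle, and then to read off surjectivity and the kernel from that sequence together with Lemma~\ref{lem:pd2}.

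First, $H^0(-)\colon K^b(\proj A)\to\mod A$ is an additive functor (cohomology is homotopy invariant), and $H^{-1}(\nu-)\colon K^b(\proj A)\xrightarrow{\nu}K^b(\inj A)\xrightarrow{H^{-1}}\mod A$ is a composite of additive functors, so applying either to endomorphisms of $\p$ already yields a unital algebra homomorphism; it remains to prove surjectivity and to identify the kernel. For $H^0(-)$ I would use the truncation triangle $H^{-1}(\p)[1]\xrightarrow{\iota}\p\xrightarrow{\pi}H^0(\p)\to H^{-1}(\p)[2]$ in $\DA$. Since $H^{-1}(\p)$ is projective (Lemma~\ref{lem:pd2}), the complex $H^{-1}(\p)[2]$ lies in $K^b(\proj A)$, and as $\p$ has no term in degree $-2$ there are no nonzero morphisms $\p\to H^{-1}(\p)[2]$; applying $\Hom_\DA(\p,-)$ to the triangle thus gives exactness of
\[\Hom_\DA(\p,H^{-1}(\p)[1])\xrightarrow{\iota_*}\End_\DA(\p)\xrightarrow{\pi_*}\Hom_\DA(\p,H^0(\p))\to0.\]
Applying instead $\Hom_\DA(-,H^0(\p))$ to the triangle and using $\Hom_\DA(H^{-1}(\p)[j],H^0(\p))=\Ext_A^{-j}(H^{-1}(\p),H^0(\p))=0$ for $j=1,2$ shows that $\pi^*\colon\End_A(H^0(\p))\to\Hom_\DA(\p,H^0(\p))$, $\phi\mapsto\phi\pi$, is an isomorphism. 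As $H^0(f)\pi=\pi f$ in $\DA$ for every $f\in\End_\DA(\p)$, the composite $(\pi^*)^{-1}\circ\pi_*$ is precisely $H^0(-)$; hence $H^0(-)$ is surjective with kernel $\im\iota_*$, which consists of endomorphisms of $\p$ factoring through $H^{-1}(\p)[1]\in\add\p_L$. So $\ker H^0(-)$ lies in the ideal of maps factoring through $\add\p_L$, and the reverse inclusion is immediate since $H^0$ annihilates $\add\p_L$ (its objects are concentrated in degree $-1$). This proves the first assertion.

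For $H^{-1}(\nu-)$ I would argue dually, using that $\nu$ is an equivalence $K^b(\proj A)\to K^b(\inj A)$, so $\End_\DA(\p)\cong\End_\DA(\nu\p)$ via $f\mapsto\nu f$, and that $\nu\p$ is a $2$-term complex of injectives in degrees $-1,0$ with $H^0(\nu\p)$ injective and contained in $\add\nu\p_R$ by Lemma~\ref{lem:pd2}. From the truncation triangle $H^{-1}(\nu\p)[1]\xrightarrow{\iota'}\nu\p\xrightarrow{\pi'}H^0(\nu\p)\to H^{-1}(\nu\p)[2]$ one gets, applying $\Hom_\DA(-,\nu\p)$ and using the degree vanishing $\Hom_\DA(H^0(\nu\p),\nu\p[1])=0$, exactness of
\[\Hom_\DA(H^0(\nu\p),\nu\p)\xrightarrow{(\pi')^*}\End_\DA(\nu\p)\xrightarrow{(\iota')^*}\Hom_\DA(H^{-1}(\nu\p)[1],\nu\p)\to0;\]
while applying $\Hom_\DA(H^{-1}(\nu\p)[1],-)$ to the same triangle and using vanishing of negative $\Ext$-groups identifies $\End_A(H^{-1}(\nu\p))$ with $\Hom_\DA(H^{-1}(\nu\p)[1],\nu\p)$ via $g\mapsto\iota'g$, under which $(\iota')^*$ becomes $H^{-1}(\nu-)$. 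Thus $H^{-1}(\nu-)$ is surjective with kernel the set of $h$ with $\nu h$ factoring through $H^0(\nu\p)\in\add\nu\p_R$, which via the equivalence $\nu$ is exactly the ideal of maps factoring through $\add\p_R$ (the reverse inclusion using that $H^{-1}(\nu-)$ kills $\add\p_R$, as $\nu\p_R$ is concentrated in degree $0$).

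The genuinely substantive input is Lemma~\ref{lem:pd2}, which is what lets one replace the single summand $H^{-1}(\p)[1]$ by all of $\add\p_L$ (and dually $H^0(\nu\p)$ by $\add\p_R$); everything else is bookkeeping with long exact sequences and degree reasons. The one slightly delicate step is checking that the connecting maps $\pi_*$ and $(\iota')^*$ really correspond to $H^0(-)$ and $H^{-1}(\nu-)$ under the isomorphisms $\pi^*$ and $g\mapsto\iota'g$, which comes down to the identities $H^0(f)\pi=\pi f$ and $\iota'\cdot H^{-1}(\nu f)=(\nu f)\cdot\iota'$; I expect this to be the main (though still routine) obstacle.
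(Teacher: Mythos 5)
Your proof is correct and proves exactly the lemma, but it takes a different technical route from the paper's own argument. The paper works at the level of honest chain maps between two-term complexes of projectives: it writes $f=(f^{-1},f^0)$, observes that $H^0(f)=0$ forces $f^0$ to factor through $p$ and hence be null-homotopic, and then reads off that the remaining $f^{-1}$ factors through $H^{-1}(\p)$, at which point Lemma~\ref{lem:pd2} finishes the job. You instead never touch a homotopy: you apply $\Hom_\DA(\p,-)$ and $\Hom_\DA(-,H^0(\p))$ to the standard truncation triangle and extract surjectivity and the kernel purely from degree-reason vanishings (for negative $\Ext$'s and for morphisms into shifts that land outside the support of $\p$), identifying the algebra map $H^0(-)$ with the connecting map $\pi_*$ via naturality of $\pi$. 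Both arguments hinge on the same substantive input — Lemma~\ref{lem:pd2}, i.e.\ that $H^{-1}(\p)$ is projective and $H^{-1}(\p)[1]\in\add\p_L$ — so neither is more or less fundamental. The payoff of your version is that the second statement then becomes literally dual (conjugate by the equivalence $\nu$ and exchange covariant and contravariant $\Hom$), whereas the paper just says "dual"; the cost is that you need to be a little careful that the various $\Hom$-group vanishings you invoke really hold in $D^b(A)$, which you justify by noting the relevant complexes all lie in $K^b(\proj A)$ (resp.\ $K^b(\inj A)$). One small point worth spelling out if you were to write this up: the identification $(\pi^*)^{-1}\circ\pi_*=H^0(-)$ uses that $\pi\colon\p\to H^0(\p)$ is the good truncation morphism $\p\to\tau_{\geq 0}\p$ and hence natural in $\p$, so $H^0(f)\circ\pi=\pi\circ f$ for every $f$; this is exactly the delicate step you flagged and it is fine.
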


\begin{proof}
Since $H^0(-)$ is a $\k$-linear functor, it gives a homomorphism of the algebras. Using that $\p$ is a projective presentation of $H^0(\p)$, we obtain that this homomorphism is surjective. By $H^0(\p_L)=0$, we have that $H^0(f)=0$ for any morphism $f$ which factors through $\p_L$. Now we assume that $H^0(f)=0$ for a chain map $f=(f^{-1}\ f^0)\in\End_\DA(\p)$. Considering the following diagram:
\[\xymatrix@R=1pc{
P^{-1}\ar[rr]^{p}\ar[dd]^{f^{-1}}&&P^0\ar[dd]^{f^0}\ar[rr]&&H^0(\p)\ar[dd]^{H^0(f)}\ar[rr]&&0\\
\\
P^{-1}\ar[rr]^{p}&&P^0\ar[rr]&&H^0(\p)\ar[rr]&&0
}\]
we have that $f^0$ factors through $p$. So we may assume that $f^0=0$ up to homotopy. In this case, $f^{-1}$ factors through $H^{-1}(\p)$. Due to Lemma~\ref{lem:pd2}, we have $H^{-1}(\p)[1]\in\add \p_L$. Hence $f$ factors through $\add \p_L$. Thus, the proof of the first statement
is complete. The second statement is dual to the first one.
\end{proof}

The following will be crucial for proving that $Q_B$
is acyclic.

\begin{lemma}\label{lem:here}
Both $\End_A(H^0(\p))$ and $\End_A(H^{-1}(\nu \p))$ are hereditary
algebras.
\end{lemma}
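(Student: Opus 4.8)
The plan is to prove that $\End_A(H^0(\p))$ is hereditary (the statement for $\End_A(H^{-1}(\nu\p))$ being dual), by showing that every submodule of a projective $\End_A(H^0(\p))$-module is again projective, or equivalently that $\Ext^2$ over this algebra vanishes. Write $C = \End_A(H^0(\p))$. The key structural input is Proposition~\ref{prop:tf}(b,c): the indecomposable summands of $H^0(\p)$ are precisely the $\Ext$-projectives in the torsion class $\T(\p)$, and every $X \in \T(\p)$ has an epimorphism $T_0 \to X$ with $T_0 \in \add H^0(\p)$ and kernel again in $\T(\p)$. Since $(\T(\p),\F(\p))$ is split by our standing hypothesis (condition (d) of Proposition~\ref{prop:CL}), we moreover know $\id X_A \le 1$ for every $X \in \T(\p)$.

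First I would set up the correspondence between $\mod C$ and $\T(\p)$: the functor $\Hom_A(H^0(\p), -)$ restricted to $\T(\p)$ lands in $\mod C$, sends $H^0(\p)$ to the regular module $C$, and — using that $H^0(\p)$ is $\Ext$-projective in $\T(\p)$ together with part (c) — identifies $\add H^0(\p)$ with $\proj C$ and is exact on short exact sequences in $\T(\p)$ with middle or end terms in $\add H^0(\p)$. (This is essentially the classical ``projectivization'' of Auslander, or one can cite that $\T(\p) = \Fac H^0(\p)$ from Proposition~\ref{prop:tf}(a).) Under this identification, computing $\pd$ of a $C$-module amounts to building a projective resolution inside $\T(\p)$ using iterated applications of part (c). The second step is then: given any $C$-module, lift it to $X \in \T(\p)$, apply part (c) to get $0 \to L \to T_0 \to X \to 0$ with $L \in \T(\p)$ and $T_0 \in \add H^0(\p)$, then apply part (c) again to $L$ to get $0 \to L' \to T_1 \to L \to 0$. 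I would then argue that $L'$ is itself in $\add H^0(\p)$ — i.e. $\Ext$-projective in $\T(\p)$ — by the same device used in the proof of Proposition~\ref{prop:basic}(c): for any $N \in \T(\p)$, the sequence $\Ext^1_A(T_1,N) \to \Ext^1_A(L',N) \to \Ext^2_A(L,N)$ has left term zero ($T_1 \in \add H^0(\p)$ is $\Ext$-projective) and right term zero (here is where I use $\id X_A \le 1$ for the torsion modules, or rather the analogous vanishing; one has to massage this since $A$ need not be hereditary — but the split torsion pair condition (d) gives $\id$ bounds on the torsion side, and a dimension shift turns an $\id$ bound into the required $\Ext^2$-vanishing against $\T(\p)$). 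Hence $\Ext^1_A(L',N) = 0$ for all $N \in \T(\p)$, so $L' \in \add H^0(\p)$ by Proposition~\ref{prop:tf}(b), giving a length-$\le 1$ projective resolution of $L$ over $C$, hence $\pd_C (\text{any module}) \le 2$, which is not yet what we want — so one more turn of the crank, applied to the \emph{first} syzygy, shows the syzygy of any $C$-module is already projective, i.e. $\gld C \le 1$.

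The main obstacle I anticipate is the bookkeeping in the previous paragraph: since $A$ is only assumed shod (not hereditary), the naive ``$\Ext^2_A = 0$'' is unavailable, and I must instead exploit the $\id \le 1$ condition on $\T(\p)$ from the split torsion pair, converting it — via the isomorphism $\Hom_{\DA}(\x,\nu\y) \cong D\Hom_{\DA}(\y,\x)$ and a degree shift — into the statement that $\Ext^2_A(-, \T(\p))$ vanishes on $\T(\p)$. A clean way to package this: for $X, N \in \T(\p)$ with $\id N_A \le 1$ one has $\Ext^2_A(X,N) \cong \Hom_{\DA}(X, N[2]) \cong D\Hom_{\DA}(\nu^{-1}N[-2], X)$ or simply $\Ext^2_A(X,N)=0$ outright from $\id N \le 1$. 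That is in fact immediate: injective dimension at most one kills all $\Ext^i$ for $i \ge 2$ regardless of the first argument. So the ``obstacle'' largely dissolves once one remembers that the split torsion pair hypothesis hands us $\id N_A \le 1$ for every $N$ in the torsion class, and the argument of Proposition~\ref{prop:basic}(c) transplants almost verbatim, with ``$H$ hereditary'' replaced by ``$\id N_A \le 1$''. The dual statement for $\End_A(H^{-1}(\nu\p))$ follows by the symmetric argument using Proposition~\ref{prop:tf}(d,e) and the $\pd \le 1$ half of condition (d).
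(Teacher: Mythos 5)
Your identification of the central mechanism is correct: Proposition~\ref{prop:tf}(b,c) supplies projective covers with syzygies inside $\T(\p)$, and the split-torsion-pair hypothesis hands you $\id N_A\le 1$ for $N\in\T(\p)$, which kills $\Ext_A^2(-,N)$ and makes the syzygy $\Ext$-projective, hence in $\add H^0(\p)$. This is exactly the vanishing the paper also exploits. (Incidentally, you can apply this argument directly to the first syzygy $L$ in $0\to L\to T_0\to X\to 0$ rather than taking two steps and then ``one more turn of the crank''; the sequence $\Ext^1_A(T_0,N)\to\Ext^1_A(L,N)\to\Ext^2_A(X,N)$ already has vanishing outer terms.)

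However, there is a genuine gap at the very first step: ``given any $C$-module, lift it to $X\in\T(\p)$.'' This lifting does not exist in general. The functor $\Hom_A(H^0(\p),-)\cong\Hom_{\DA}(\p,-)$ restricted to $\T(\p)$ is fully faithful with essential image $\Y(\p)$, which is a \emph{proper} torsion-free class of $\mod B_0$ (where $B_0=\End_A(H^0(\p))$). The category $\mod B_0$ also contains the nonzero torsion class $\X(\p)\cap\mod B_0$, whose objects arise from $\F(\p)$ via $\Hom_{\DA}(\p,-[1])$ and do \emph{not} come from $\T(\p)$. Your argument therefore only bounds the projective dimension of modules in $\Y(\p)$ and says nothing about the torsion part.

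The paper closes this gap by first reducing to the simple $B_0$-modules $S$, which by the torsion pair lie entirely in $\X(\p)\cap\mod B_0$ or in $\Y(\p)\cap\mod B_0$. For $S\in\Y(\p)$ your argument applies essentially verbatim. For $S\in\X(\p)$, one writes $S=\Hom_{\DA}(\p,Y[1])$ with $Y\in\F(\p)$; the syzygy of $S$ in $\mod B_0$ still has the form $\Hom_{\DA}(\p,L)$ for some $L\in\T(\p)$, but now $L$ sits in a \emph{triangle} $Y\to L\to T_0\to Y[1]$ in $\DA$ rather than a short exact sequence in $\mod A$. The vanishing $\Ext^1_A(L,\T(\p))=0$ then comes from $\Ext^1_A(Y,\T(\p))=0$, which is precisely the split-torsion-pair condition (rather than from $\id\le 1$). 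So the two cases use genuinely different sources of vanishing, and your proposal only implements one of them. To repair it you would need to either add the $\X(\p)$ case with the triangle argument, or find a different reduction that avoids lifting arbitrary $B_0$-modules to $\T(\p)$.
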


\begin{proof}
We prove the statement for $B_0 =\End_A(H^0(\p))$.
The proof for $\End_A(H^{-1}(\nu \p))$ is similar.
To complete the proof, it is sufficient to prove that $\pd S_{B_0}\leq 1$ for any simple $B_0$-module $S$.
Since, by Lemma \ref{lem:surj},
the algebra $B_0$ is a factor algebra of $B$, we have
that $\mod B_0$ is a full subcategory of $\mod B$ closed under both submodules and
factor modules. Therefore, the torsion pair $(\X(\p),\Y(\p))$
in $\mod B$, gives rise to a torsion pair
$(\X(\p)\cap\mod B_0,\Y(\p)\cap\mod B_0)$ in $\mod B_0$. It is straightforward to verify that we have $\Hom_\DA(\p,H^0(\p))\cong\Hom_A(H^0(\p),H^0(\p))$, so $\Hom_\DA(\p,H^0(\p))$ is a projective generator of $\mod B_0$.

Let $S$ be a simple $B_0$-module and $\Hom_\DA(\p,T_0)\s{p_S}\rightarrow S\rightarrow 0$ be a projective cover of $S$, where $T_0\in\add H^0(\p)$. We have that $\Hom_\DA(\p, T_0)$ is in $\Y(\p)$, and since
$\Y(\p)$ is closed under submodules, also the kernel
of $p_S$ is in $\Y(\p)$. Hence there is an $L$ in $\T(\p)$,
such that there is an exact sequence
\begin{equation}\label{eq:ses2}
0 \to \Hom_\DA(\p,L) \to \Hom_\DA(\p,T_0) \to S \to 0.
\end{equation}
We claim that $\Ext^1_A(L,\T(\p))=0$.
Indeed, since $S$ is simple,
either $S\in\X(\p)\cap\mod B_0$ or $S\in\Y(\p)\cap\mod B_0$.
If $S\in\Y(\p)\cap\mod B_0$, then, by definition,
there is an $X\in\T(\p)$ such that $S=\Hom_\DA(\p, X)$.
By the silting theorem, the exact sequence \eqref{eq:ses2} gives rise to an exact sequence
\begin{equation}\label{eq:ses3}
0\rightarrow L\rightarrow T_0\rightarrow X\rightarrow 0
\end{equation}
in $\mod A$. By the assumption on $\T(\p)$, we have that \sloppy
$\Ext_A^2(-,\T(\p))=0$. In particular $\Ext_A^2(X,\T(\p))=0$, and hence
it follows from the sequence (\ref{eq:ses3}) that
$\Ext_A^1(L,\T(\p))=0$.

Now consider the case with $S\in\X(\p)\cap\mod B_0$.
Then, by definition, there is a $Y\in\F(\p)$ such that $S=\Hom_\DA(\p,Y[1])$.
So we have a triangle (see \cite[Theorem 2.3]{bz}) in $\DA$:
\[Y\rightarrow L\rightarrow T_0\rightarrow Y[1].\]
Since $(\T(\p),\F(\p))$ is split, it follows
that $\Ext^1(Y,\T(\p))=0$. Therefore $\Ext^1(L,\T(\p))=0$. Thus, we have finished the proof of the claim that $\Ext^1_A(L,\T(\p))=0$. This implies that $L\in\add H^0(\p)$ by Proposition~1.1(b).
Hence the exact sequence \eqref{eq:ses2} is a projective resolution of $S$,
which shows that $\pd S_{B_0}\leq 1.$
\end{proof}



\providecommand{\Irr}{\mathop{\rm Irr}\nolimits}%
\def\B{\mathbf{B}}%

We first give a preliminary description of
$Q_B$ and $J_B$ for $B = \End_\DA(\p)$. This will be improved in Lemma \ref{lem:ac}.

Let $V_?$ be the set of the vertices of $Q_B$ corresponding to the direct summands of $\p_?$ and let $e_?$ be the sum of primitive orthogonal idempotents $e_v$ with $v\in V_?$, where $?=L$, $M$ or $R$. Let $Q_B^{LM}$ (resp. $Q_B^{MR}$) be the full subquiver of $Q_B$ consisting of the vertices in $V_L\cup V_M$ (resp. $V_M\cup V_R$) and the arrows between them. For any algebra $\Lambda$, we use $Q_{\Lambda}$ to denote its (Gabriel) quiver.

\begin{lemma}\label{lem:qb-ver1}
With the above notation, the following hold. 
\begin{itemize}
\item[(a)] Each path in $Q_B$ from $V_L$ to $V_R$, or from $V_R$ to $V_L$, is in $J_B$. In particular, there are no arrows from $V_L$ to $V_R$ and no arrows from $V_R$ to $V_L$.
\item[(b)] The surjective algebra morphisms in Lemma~\ref{lem:surj} induce isomorphisms
of quivers $Q_B^{MR}\cong Q_{\End_A(H^0(\p))}$ and $Q_B^{LM}\cong Q_{\End_A(H^{-1}(\nu \p))}$.
\item[(c)] The algebra $B$ is monomial,
and the ideal $J_B$ is generated by the paths from $V_L$ to $V_R$ and the paths from $V_R$ to $V_L$.
\end{itemize}
\end{lemma}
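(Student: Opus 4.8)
The three parts of Lemma~\ref{lem:qb-ver1} should be proved in the order (a), (b), (c), since (b) and (c) both rely on (a). Throughout write $F = \Hom_{\DA}(\p,-)$ and recall that the indecomposable projective $B$-modules are exactly the $F(\p_i)$ for $\p_i$ an indecomposable summand of $\p$, with the radical of $F(\p_i)$ read off from the irreducible maps into $\p_i$ in $\add\p$. The key input is that the idempotent $e_M$ (together with $e_L$ or $e_R$) cuts out the factor algebras of Lemma~\ref{lem:surj}: more precisely, $\End_A(H^0(\p)) \cong B/Be_LB$ and $\End_A(H^{-1}(\nu\p)) \cong B/Be_RB$, where $e_L$ (resp.\ $e_R$) is the idempotent supported on $V_L$ (resp.\ $V_R$).

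\emph{Part (a).} A path in $Q_B$ from $V_L$ to $V_R$ corresponds to a nonzero element of $e_R(\rad B/\rad^2 B \cdots) e_L$-type composites; concretely it is represented by a composition of irreducible maps in $\add\p$ starting at a summand of $\p_L = P[1]$ (with $P$ projective, by definition of $\p_L$) and ending at a summand of $\p_R$ (which is of the form $Q[1]$ as well? no --- $\p_R$ has zero $-1$st term, so it is a genuine projective module $Q$ in degree $0$). So such a path is a nonzero map $P[1] \to Q$ in $K^b(\proj A) \subseteq \DA$ factoring through intermediate summands of $\p$. I would argue directly that $\Hom_{\DA}(P[1], \p) $ restricted appropriately forces any such composite to vanish: a map $P[1] \to \p_i$ in $\DA$ for $\p_i$ a summand with nonzero $0$th term factors, up to the $\add\p_L$-part, through $H^0(\p_i)$ by a chain-level argument, and then one uses that $\Hom_A(P, H^0(\p_R)) $ together with the fact that $H^0$ of a summand of $\p_R$ is the projective $Q$ itself --- but $\Hom_A(P,Q)$ need not vanish. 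The cleaner route: a path from $V_L$ to $V_R$ would give, after applying $H^0$, a nonzero map $H^0(\p_L) = 0 \to H^0(\p_R)$, contradiction, once one checks the path is not killed earlier; dually, applying $H^{-1}(\nu-)$ to a path from $V_R$ to $V_L$ kills it via $H^{-1}(\nu\p_R)=0$. The honest statement is that any such path, written as a product of arrows $a_1 a_2 \cdots a_k$ with source in $V_L$ and target in $V_R$, lies in $J_B$ because its image under the composite $B \twoheadrightarrow \End_A(H^0(\p))$ is zero (the source idempotent maps to $0$ there) and simultaneously its image in $\End_A(H^{-1}(\nu\p))$ is zero (the target idempotent maps to $0$ there); I then need the lemma that $J_B \supseteq Be_LB \cap Be_RB \cap (\text{arrow ideal})$ suffices — i.e.\ that the intersection of the two kernels, intersected with paths of length $\geq 1$ going $L\to R$, is all of it. This is where the main work sits: showing a nonzero element of $B$ represented by an $L$-to-$R$ path cannot survive in \emph{both} factor algebras, which follows because by Lemma~\ref{lem:surj} the two kernels are exactly the maps factoring through $\add\p_L$ and through $\add\p_R$ respectively, and an indecomposable summand cannot be simultaneously in $\add\p_L$ and $\add\p_R$ unless it is zero. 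The ``no arrows'' clause is the length-one case.

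\emph{Part (b).} By part (a) there are no arrows between $V_L$ and $V_R$, so the full subquiver on $V_M\cup V_R$ is unaffected by killing $e_L$: the surjection $B \twoheadrightarrow \End_A(H^0(\p)) \cong B/Be_LB$ of Lemma~\ref{lem:surj} restricts to an isomorphism $e_{MR} B e_{MR}/(\text{stuff through }e_L) \xrightarrow{\sim} \End_A(H^0(\p))$, and since no arrow of $Q_B$ enters or leaves $V_L$ from $V_R$, and $V_L$ contributes no vertices to $\End_A(H^0(\p))$ (its summands $\p_L$ map to $0$ under $H^0$), the vertex sets and arrow sets match: vertices of $Q_{\End_A(H^0(\p))}$ are $V_M \cup V_R$, arrows are the arrows of $Q_B$ among those, i.e.\ exactly $Q_B^{MR}$. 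One must check no new relations among $Q_B^{MR}$-arrows appear and none disappear — but the kernel $Be_LB$ is spanned by paths through $V_L$, which by (a) are disjoint from paths supported on $V_M\cup V_R$, so $Q_B^{MR} \cong Q_{\End_A(H^0(\p))}$ as quivers (we are only claiming a quiver isomorphism, not an algebra isomorphism, so relations are irrelevant beyond confirming the arrow set). The statement for $Q_B^{LM}$ is dual via $H^{-1}(\nu-)$ and $e_R$.

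\emph{Part (c).} Here I combine (a) and Lemma~\ref{lem:here}. By (b), $Q_B^{MR} \cong Q_{\End_A(H^0(\p))}$ and $Q_B^{LM} \cong Q_{\End_A(H^{-1}(\nu\p))}$, and both these endomorphism algebras are hereditary by Lemma~\ref{lem:here}, hence their quivers carry \emph{no} relations; equivalently, $e_{MR} J_B e_{MR} \subseteq Be_LB$ and $e_{LM}J_B e_{LM} \subseteq Be_RB$. Now take any relation $\rho \in J_B$, i.e.\ a $\k$-linear combination of parallel paths; decompose by source–target idempotents into components $e_i \rho e_j$. Each such component is a combination of paths from vertex $j$ to vertex $i$. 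By (a), any path crossing between $V_L$ and $V_R$ is already in $J_B$ (indeed in the ideal generated by $L$-to-$R$ and $R$-to-$L$ paths), so modulo that ideal we may assume every path in $\rho$ stays within $V_L\cup V_M$ or within $V_M\cup V_R$. A path staying within $V_L \cup V_M$: since $\End_A(H^{-1}(\nu\p))$ is hereditary, $Q_B^{LM}$ has no relations, so any relation supported there is zero in $e_{LM}Be_{LM}$, meaning it lies in $Be_RB$ — but a path inside $V_L\cup V_M$ doesn't pass through $V_R$, so it must actually be $0$ in $B$. Symmetrically for $V_M\cup V_R$ using hereditariness of $\End_A(H^0(\p))$. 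Hence the only relations are consequences of the $L$-to-$R$ and $R$-to-$L$ vanishing paths, and since each such path is itself a single monomial, $B$ is monomial and $J_B$ is generated by those paths.

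\emph{Expected main obstacle.} The crux is part (a), specifically the claim that a path from $V_L$ to $V_R$ (or back) is zero in $B$, as opposed to merely zero in each of the two Nakayama-type factor algebras. The subtlety: an element could conceivably be nonzero in $B$ yet factor through $\add\p_L$ \emph{and} (separately, via a different factorization) through $\add\p_R$. Ruling this out requires using the specific structure — that $\p_L = P[1]$ with $P$ projective and $\p_R$ a projective module in degree $0$, that there are no nonzero maps $P[1] \to Q$ and no nonzero maps $Q \to P[1]$ in $\DA$ that factor through summands with mixed support, and ultimately Lemma~\ref{lem:pd2} which pins down $H^{-1}(\p)$ and $H^0(\nu\p)$. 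I would handle this by chasing a would-be nonzero $L$-to-$R$ composite through the triangle $H^{-1}(\p_i)[1] \to \p_i \to H^0(\p_i) \to$, peeling off $\p_L$-summands at each stage, to reduce to a map $H^0(\p_L) = 0 \to H^0(\p_R)$, and dually for the other direction; the bookkeeping that this peeling terminates and genuinely lands in $J_B$ is the technical heart.
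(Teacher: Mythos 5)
There is a genuine gap in your argument for part (a), and you seem to sense it yourself. You try to show that an $L$-to-$R$ path dies by chasing it through the two factor algebras $\End_A(H^0(\p))$ and $\End_A(H^{-1}(\nu\p))$, and then need to rule out that an element of $B$ could factor through $\add\p_L$ and also through $\add\p_R$ while being nonzero. The remark that ``an indecomposable summand cannot be simultaneously in $\add\p_L$ and $\add\p_R$'' does not deliver this: it does not show $Be_LB\cap Be_RB=0$, and in general an element of the intersection of the two kernels can well be nonzero. Your proposed rescue (peeling off $\p_L$-summands through the triangle $H^{-1}(\p_i)[1]\to\p_i\to H^0(\p_i)$ and invoking Lemma~\ref{lem:pd2}) is never carried out, and it is also not needed.

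The paper's argument for (a) is far more direct and bypasses the factor algebras entirely. A path from a vertex $v\in V_L$ to a vertex $w\in V_R$ represents, in $B=\End_{\DA}(\p)$, an element of the Hom-space between the corresponding indecomposable summands, that is, of $\Hom_{\DA}(\p_L,\p_R)$ or $\Hom_{\DA}(\p_R,\p_L)$. Both of these vanish for elementary reasons: $\p_L\in\add\left(\proj A\right)[1]$ is concentrated in degree $-1$ and $\p_R\in\add\proj A$ in degree $0$, so $\Hom_{\DA}(\p_L,\p_R)\cong\Ext^{-1}_A(-,-)=0$ and $\Hom_{\DA}(\p_R,\p_L)\cong\Ext^1_A(\p_R,-)=0$ since $\p_R$ is projective. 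Hence any such path is zero in $B$, i.e.\ lies in $J_B$; the ``no arrows'' clause then follows because $J_B$ is admissible, hence contained in $\rad^2 \k Q_B$. You should replace the entire discussion in your part (a) by this two-line computation.

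Parts (b) and (c) of your proposal are essentially correct and agree with the paper's approach: (b) uses that the Gabriel quiver of $B/Be_?B$ is the full subquiver on the remaining vertices, and (c) combines the hereditariness of the two factor algebras from Lemma~\ref{lem:here} (forcing $J_B\subseteq I_L\cap I_R$, where $I_?$ is the ideal of paths through $V_?$) with part (a). Your phrasing around ``zero in $e_{LM}Be_{LM}$'' for (c) is muddled but the underlying reasoning is sound; the cleaner formulation is that since $B/Be_LB$ and $B/Be_RB$ are hereditary, $J_B$ lies in both $I_L$ and $I_R$, so every path in a minimal relation passes through both $V_L$ and $V_R$ and hence contains an $L$-to-$R$ or $R$-to-$L$ subpath, which by (a) already lies in $J_B$.
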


\begin{proof}
The first assertion in (a) follows from $\Hom_\DA(\p_R,\p_L)=0$ and $\Hom_\DA(\p_L,\p_R)=0$. Then the second assertion follows since $J_B$ is admissible.

The surjective algebra morphisms in Lemma~\ref{lem:surj} induce \sloppy algebra-isomorphisms
$\End_A(H^0(\p))\cong B/Be_LB$ and $\End_A(H^{-1}(\nu\p))\cong B/Be_RB$. The (Gabriel) quiver of $B/Be_?B$ is obtained from $Q_B$ by removing the vertices in $V_?$ and the arrows adjacent to these vertices, where $?=L, R$. Then (b) follows.

Moreover, it follows from the above algebra-isomorphisms that $J_B/J_Be_LJ_B=0=J_B/J_Be_RJ_B$ since, by Lemma~\ref{lem:here}, $\End_A(H^0(\p))$ and $\End_A(H^{-1}(\nu\p))$ are hereditary. So for any minimal relation $\sum_i\lambda_if_i$ with $\lambda_i\in\k$ nonzero and $f_i$ a path in $Q_B$, each $f_i$ factors through $V_L$ and $V_R$. Hence, by (a), the assertions in (c) follow.
\end{proof}

Before we can show that $Q_B$ is acyclic we need some more
properties of the torsion pair $(\X(\p), \Y(\p))$.

\begin{lemma}\label{lem:left-right}
With the above notation, the following hold. 
\begin{itemize}
\item[(a)] For any $X$ in $\mod A$, we have
that the $B$-module $\Hom_{\DA}(\p,X[1])$ is in $\X(\p)$, and that
$\Hom_{\DA}(\p,X)$ is in $\Y(\p)$.
\item[(b)] We have that the projective $B$-module $\Hom_{\DA}(\p,\p_L)$
is in $\X(\p)$ and that the injective $B$-module
$\Hom_{\DA}(\p, \nu \p_R)$
is in $\Y(\p)$.
\item[(c)] Let $C_M$ be the projective cover
of a $B$-module $M$. If $C_M$ is in
$\add \Hom_{\DA}(\p,\p_L)$, then $M$ is in $\X(\p)$.
\item[(d)] Let $E_M$ be the injective envelope
of a $B$-module $M$. If $E_M$ is in
$\add \Hom_{\DA}(\p, \nu \p_R)$, then $M$ is in $\Y(\p)$.
\end{itemize}
\end{lemma}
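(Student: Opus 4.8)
The plan is to prove the four statements in order, using the silting theorem freely. For (a), recall that by the silting theorem $\X(\p) = \T(\q)$ and $\Y(\p) = \F(\q)$ for a 2-term silting complex $\q$ in $D^b(\mod B)$, and $(\X(\p),\Y(\p))$ is a torsion pair in $\mod B$. I would invoke Proposition~\ref{prop:tf}(a) applied to $\q$: $\T(\q) = \Fac H^0(\q)$ and $\F(\q) = \Sub H^{-1}(\nu \q)$. Alternatively, and more directly, the defining property $\X(\p) = \Hom_{\DA}(\p,\F(\p)[1])$ and $\Y(\p) = \Hom_{\DA}(\p,\T(\p))$ gives it at once: given any $X \in \mod A$, split $X$ via the torsion pair $(\T(\p),\F(\p))$ into $0 \to tX \to X \to fX \to 0$ and apply $\Hom_{\DA}(\p,-)$. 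Since $\Hom_{\DA}(\p, fX) = 0$ and $\Hom_{\DA}(\p, tX[1]) = 0$, the long exact sequence collapses to a short exact sequence $0 \to \Hom_{\DA}(\p,tX) \to \Hom_{\DA}(\p,X) \to \Hom_{\DA}(\p, fX[1]) \to 0$ with left term in $\Y(\p)$ and right term in $\X(\p)$; similarly applying $\Hom_{\DA}(\p,-)$ to $X[1]$ gives $0 \to \Hom_{\DA}(\p, tX[1]) \to \Hom_{\DA}(\p,X[1]) \to \Hom_{\DA}(\p, fX[1][1]) = 0$, so $\Hom_{\DA}(\p,X[1]) = \Hom_{\DA}(\p, tX[1]) \in \X(\p)$. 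For $\Hom_{\DA}(\p,X) \in \Y(\p)$ one argues dually, or notes that $\Hom_{\DA}(\p,X)$ has no submodule in $\X(\p)$: any such would come from an object of $\T(\p)$ mapping to $X$ and, $(\T(\p),\F(\p))$ being split, this forces it to land in $\Y(\p)$.

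For (b), apply (a) with $X = P^0$ (where $\p_L = (P^{-1}_L \to 0)$, i.e. the relevant projective sits in degree $-1$): more precisely, if $\p_L = (P_L[1])$ for a projective $P_L$, then $\Hom_{\DA}(\p,\p_L) = \Hom_{\DA}(\p, P_L[1])$, which is in $\X(\p)$ by part (a) with $X = P_L \in \mod A$. Dually, $\Hom_{\DA}(\p, \nu\p_R)$: writing $\nu\p_R = (\nu P_R)$ concentrated in degree $0$ for an injective $\nu P_R$, part (a) gives $\Hom_{\DA}(\p, \nu\p_R) = \Hom_{\DA}(\p, \nu P_R) \in \Y(\p)$. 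That $\Hom_{\DA}(\p, \p_L)$ is projective and $\Hom_{\DA}(\p,\nu\p_R)$ injective is standard: $\Hom_{\DA}(\p,\p)$ is the regular module, $\p_L$ is a summand of $\p$, and $\Hom_{\DA}(\p,\nu\p) \cong D\Hom_{\DA}(\p,\p)$ is the injective cogenerator with $\nu\p_R$ a summand of $\nu\p$.

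For (c) and (d), I would use that $(\X(\p),\Y(\p))$ is a torsion pair together with parts (a) and (b). For (c): let $0 \to K \to C_M \to M \to 0$ be the projective cover, with $C_M \in \add\Hom_{\DA}(\p,\p_L) \subseteq \X(\p)$. Since $\X(\p)$ is closed under quotients (it is a torsion class), $M \in \X(\p)$. For (d): let $0 \to M \to E_M \to C \to 0$ be the injective envelope, with $E_M \in \add\Hom_{\DA}(\p,\nu\p_R) \subseteq \Y(\p)$; since $\Y(\p)$ is closed under submodules (it is a torsion-free class), $M \in \Y(\p)$. The only thing to double-check is that $\X(\p)$ is genuinely a torsion class and $\Y(\p)$ a torsion-free class, which is part of the silting theorem statement recalled above.

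The step I expect to require the most care is part (a), specifically pinning down which of $\Hom_{\DA}(\p,X)$ and $\Hom_{\DA}(\p,X[1])$ lands in which half, and making the collapsing of the long exact sequence fully rigorous — one must verify that $\Hom_{\DA}(\p, tX[1]) = 0$ (from $tX \in \T(\p)$) and $\Hom_{\DA}(\p, fX) = 0$ (from $fX \in \F(\p)$), and also that $\Hom_{\DA}(\p, fX[2]) = 0$ so that $\Hom_{\DA}(\p, X[1]) \to \Hom_{\DA}(\p, fX[1][1])$ has the claimed form; since $\p$ is 2-term, $\Hom_{\DA}(\p, Z[i]) = 0$ for $Z$ a module and $i \geq 2$, which handles this. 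Everything else is a routine consequence of the torsion-pair closure properties and parts (a), (b) already established.
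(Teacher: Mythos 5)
Your approach is essentially the one the paper uses: part (a) is the key fact (the paper outsources it to \cite[Lemma~3.6]{bz}, whose argument is precisely the torsion-pair decomposition you invoke), and then (b) specializes (a), while (c) and (d) follow from closure of $\X(\p)$ under quotients and of $\Y(\p)$ under submodules. Parts (b), (c), (d) are fine. However, the execution of part (a) contains a concrete error that needs fixing.

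Apply $\Hom_{\DA}(\p,-)$ to the triangle $tX \to X \to fX \to tX[1]$. The relevant terms of the long exact sequence are
\[
\Hom(\p, fX[-1]) \to \Hom(\p, tX) \to \Hom(\p, X) \to \Hom(\p, fX) \to \Hom(\p, tX[1]) \to \Hom(\p, X[1]) \to \Hom(\p, fX[1]) \to \Hom(\p, tX[2]).
\]
You correctly observe $\Hom(\p, fX)=0$ and $\Hom(\p, tX[1])=0$; in addition $\Hom(\p, fX[-1])=0$ and $\Hom(\p, tX[2])=0$ for degree reasons, since $\p$ is 2-term. But note that the two vanishings you single out sit \emph{between} the degree-0 and degree-1 pieces, so the long exact sequence does not collapse to the four-term sequence
$0 \to \Hom(\p,tX) \to \Hom(\p,X) \to \Hom(\p, fX[1]) \to 0$
that you wrote -- there is simply no such connecting map. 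What one actually gets is two separate isomorphisms: $\Hom(\p,X) \cong \Hom(\p, tX)$, which lies in $\Y(\p)=\Hom(\p,\T(\p))$, and $\Hom(\p,X[1]) \cong \Hom(\p, fX[1])$, which lies in $\X(\p)=\Hom(\p,\F(\p)[1])$. The second point is where your write-up goes genuinely wrong: you conclude $\Hom(\p,X[1]) = \Hom(\p, tX[1]) \in \X(\p)$, but $\Hom(\p, tX[1]) = 0$ by definition of $\T(\p)$, so as stated your argument would prove the false claim that $\Hom(\p,X[1])$ always vanishes. The correct identification is with $\Hom(\p, fX[1])$, which is where membership in $\X(\p)$ actually comes from. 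Once that is corrected, your alternative ``argue dually or use splitness'' aside for $\Hom(\p,X) \in \Y(\p)$ is unnecessary -- the isomorphism $\Hom(\p,X) \cong \Hom(\p, tX)$ already does it -- and it is better avoided anyway since (a) should not need the splitness hypothesis.
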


\begin{proof}
Part (a) is contained in \cite[Lemma~3.6]{bz}.
Part (b) follows directly from (a).
Then parts (c) and (d) follow from the
facts that $\X(\p)$ is closed under factor modules,
and $\Y(\p)$ is closed under submodules.
\end{proof}

\begin{lemma}\label{lem:splits}
We have $\Ext^2_B(\X(\p), \Y(\p)) =0$.
\end{lemma}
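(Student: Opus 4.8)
The goal is to show $\Ext^2_B(X, Y) = 0$ for all $X \in \X(\p)$ and $Y \in \Y(\p)$. The plan is to reduce the statement to simple modules and then exploit the structure of projective resolutions over $B$ that comes from the silting theorem. First, since $\Ext^2_B(-, Y)$ is half-exact and $\X(\p)$ is closed under extensions and factor modules, and since $\Ext^2_B(-, -)$ vanishes on projectives in the first variable (and $\gld B \leq 3$ by Proposition~\ref{prop:basic}(d)), it suffices to prove the vanishing when $X = S$ is a simple $B$-module lying in $\X(\p)$. (One can dualize and also reduce $Y$ to a simple in $\Y(\p)$, but reducing $X$ should be the cleaner move.) By Proposition~\ref{prop:basic}(c), every $X \in \X(\p)$ satisfies $\id X_B \leq 1$; this already kills $\Ext^i_B(-, X)$ for $i \geq 2$, but here $X$ sits in the first variable, so we instead need control of $\pd$ of the simples in $\X(\p)$, or equivalently of the first syzygy.

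The key input is the silting theorem together with Lemma~\ref{lem:left-right}. Let $S \in \X(\p)$ be simple with projective cover $C_S \twoheadrightarrow S$. Write $C_S = \Hom_{\DA}(\p, T_0')$ for an appropriate $T_0' \in \add H^0(\p)$; the projective $B$-modules are exactly the $\Hom_{\DA}(\p, T)$ with $T \in \add H^0(\p)$ together with the summands $\Hom_{\DA}(\p, \p_L)$, and by Lemma~\ref{lem:left-right}(b),(c) a simple in $\X(\p)$ whose projective cover lies in $\add\Hom_{\DA}(\p,\p_L)$ causes no trouble — such $S$ has $\pd S_B \leq 1$ since $\Hom_{\DA}(\p,\p_L)$ is projective and the radical of $\p_L$-type projectives behaves well under $F = \Hom_{\DA}(\p,-)$. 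So the interesting case is $S$ with projective cover $C_S = F(T_0)$, $T_0 \in \add H^0(\p)$, $T_0$ having no $\p_L$-summand. As in the proof of Lemma~\ref{lem:here}, the kernel $\Omega S$ of $C_S \twoheadrightarrow S$ lies in $\Y(\p)$ (since $\Y(\p)$ is closed under submodules and $F(T_0) \in \Y(\p)$ by Lemma~\ref{lem:left-right}(a)), so $\Omega S = F(L)$ for some $L \in \T(\p)$, fitting into a short exact sequence $0 \to L \to T_0 \to X \to 0$ in $\mod A$ for a suitable $A$-module $X$ (via the fully faithful $F|_{\T(\p)}$ — one has to check $S$ arises as $F$ of a $\T(\p)$-module or as $F((-)[1])$ of an $\F(\p)$-module, exactly the two cases split out in Lemma~\ref{lem:here}).

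Now I apply $\Hom_B(-, Y)$ to $0 \to F(L) \to F(T_0) \to S \to 0$ and use that $\Ext^2_B(F(T_0), Y) = 0$ since $F(T_0)$ is projective; hence $\Ext^2_B(S, Y)$ is a quotient of $\Ext^1_B(F(L), Y)$. To finish I want $\Ext^1_B(F(L), Y) = 0$. Here is where the assumption on $\p$ enters: $Y \in \Y(\p)$ has $\pd Y_B \leq 1$ by Proposition~\ref{prop:basic}(c)?? — wait, we are over a general $A$ here, not hereditary, so I should instead argue that $F(L) = \Hom_{\DA}(\p, L)$ with $L \in \add H^0(\p)$ (exactly as concluded at the end of the proof of Lemma~\ref{lem:here}, using $\Ext^1_A(L, \T(\p)) = 0$, which follows from the split torsion pair and condition (d): $\Ext^2_A(X, \T(\p)) = 0$ since $\id$ of things in $\T(\p)$ is $\leq 1$, or $\Ext^1_A(Y, \T(\p)) = 0$ since the pair is split). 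Once $L \in \add H^0(\p)$, the module $F(L)$ is projective-or-close-to-it; more precisely the computation in Lemma~\ref{lem:here} already shows $\Omega S = F(L)$ with $L \in \add H^0(\p)$, so $\pd S_B \leq 1$ for every simple $S \in \X(\p)$, and therefore $\Ext^2_B(S, -) = 0$ identically. Combined with the reduction in the first paragraph, this gives $\Ext^2_B(\X(\p), \Y(\p)) = 0$.

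The main obstacle I anticipate is the bookkeeping in the reduction to simples: $\X(\p)$ is closed under factor modules and extensions, so a dévissage using the long exact sequence of $\Ext_B(-, Y)$ works, but one must be careful that the submodules appearing in a composition series of $X \in \X(\p)$ need not lie in $\X(\p)$. The fix is to filter by the radical series and induct, peeling off a simple top $S \in \X(\p)$ (the top of anything in $\X(\p)$ is in $\X(\p)$ since $\X(\p)$ is closed under quotients) and applying $\Hom_B(-,Y)$ to $0 \to \rad X \to X \to \bigoplus S_i \to 0$: the outer $\Ext^2$-terms vanish by induction on length and by the simple case, so $\Ext^2_B(X, Y) = 0$. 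The genuine content, that $\pd S_B \leq 1$ for simple $S \in \X(\p)$, is essentially already present in the proof of Lemma~\ref{lem:here}, so the proof should be short.
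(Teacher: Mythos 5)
Your proposal takes a genuinely different route from the paper's, but it has two substantive gaps that I do not think can be patched without essentially re-inventing the paper's argument. The paper proves this lemma in two lines: since $(\T(\p),\F(\p))$ is split, $\p$ is a tilting complex by \cite[Proposition~5.7]{bz}, so $\Hom_{\DA}(\p,-)$ is a derived equivalence, and then
$\Ext^2_B(\X(\p),\Y(\p))\cong\Hom_{\DA}(\F(\p)[1],\T(\p)[2])\cong\Ext^1_A(\F(\p),\T(\p))=0$
directly from split-ness. You instead try to reduce to simples and compute syzygies.

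The first gap is the d\'evissage. You acknowledge that composition factors of $X\in\X(\p)$ need not lie in $\X(\p)$, and you propose peeling off $X/\rad X$ and inducting via $0\to\rad X\to X\to X/\rad X\to 0$. But the long exact sequence for $\Hom_B(-,Y)$ gives $\Ext^2_B(X/\rad X,Y)\to\Ext^2_B(X,Y)\to\Ext^2_B(\rad X,Y)$, so you still need $\Ext^2_B(\rad X,Y)=0$, and $\rad X$ is in general not in $\X(\p)$ (torsion classes are closed under quotients and extensions, not submodules). Induction on length does not apply. If you chase this d\'evissage to the end, it bottoms out at $\Ext^2_B(S_v,S_w)$ with $v\in V_L$ and $w\in V_R$, which is precisely the nontrivial case: it is the case used in Lemma~\ref{lem:nopath} to rule out paths from $V_L$ to $V_R$.

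The second, more serious, gap is the assertion that $\pd S_B\leq 1$ for every simple $S\in\X(\p)$. Your argument via the proof of Lemma~\ref{lem:here} does go through for simples at vertices in $V_M$: under the hypotheses of this section $H^0(\p)$ lies in $\add\p$ as an object of $\DAA$, so $\Hom_{\DA}(\p,T)$ is $B$-projective for $T\in\add H^0(\p)$, the $B_0$-projective cover coincides with the $B$-projective cover, and the resolution $0\to\Hom_{\DA}(\p,L)\to\Hom_{\DA}(\p,T_0)\to S\to 0$ with $L\in\add H^0(\p)$ shows $\pd S_B\leq 1$. However, for simples at vertices in $V_L$ (those whose projective cover lies in $\add\Hom_{\DA}(\p,\p_L)$), the $B_0$-argument does not apply since such $S$ is not a $B_0$-module, and your claim that ``the radical of $\p_L$-type projectives behaves well'' is not substantiated and is in fact false in general: since $\pd S_v\leq 1$ for $v\in V_M\cup V_R$, a silted algebra of global dimension $3$ must have some $v\in V_L$ with $\pd S_v=3$. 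So these are exactly the simples in $\X(\p)$ that do not have projective dimension $\leq 1$. (Note also the minor misstep of citing Proposition~\ref{prop:basic}(d) for $\gld B\leq 3$; that proposition assumes the source algebra is hereditary, which $A$ is not here.)

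In short, the case $S_v$ at $v\in V_L$ against $S_w$ at $w\in V_R$ has no handle in your write-up, and this is the crux. The paper's derived-equivalence argument handles all pairs $(X,Y)$ uniformly without a case split, which is why it avoids this obstruction.
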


\begin{proof}
Since $(\T(\p), \F(\p))$ is split by assumption,
we have that $\p$ is a tilting complex by \cite[Proposition 5.7]{bz}. Then we have \[\Ext_B^2(\X(\p),\Y(\p))\cong \Hom_{D^b(A)}(\F(\p)[1],\T(\p)[2])\cong\Ext^1_A(\F(\p),\T(\p))=0.\]
\end{proof}

\begin{lemma}\label{lem:nopath}
There are no paths from $V_L$ to $V_R$ in $Q_B$.
\end{lemma}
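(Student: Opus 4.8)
The goal is to rule out paths in $Q_B$ from $V_L$ to $V_R$. By Lemma~\ref{lem:qb-ver1}(a) such a path already lies in $J_B$, so the point is really to exclude an arrow $L \to M \to \cdots \to M \to R$ whose composite is a nonzero relation; since $B$ is monomial by Lemma~\ref{lem:qb-ver1}(c), the relations are monomials, so I must show that \emph{no} path from $V_L$ to $V_R$ exists at all, not merely that it vanishes. I would argue by contradiction: suppose there is a shortest path $\gamma$ from a vertex $\ell \in V_L$ to a vertex $r \in V_R$, say $\gamma \colon \ell = v_0 \to v_1 \to \cdots \to v_n = r$ with $v_1,\dots,v_{n-1}\in V_M$ by Lemma~\ref{lem:qb-ver1}(a). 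Minimality of $\gamma$ means no proper subpath realizes an arrow-composite that is already a shorter path between the same endpoints, and it means $\gamma \notin J_B$ on any proper initial or terminal segment.

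The key is to translate the existence of $\gamma$ into a statement about the split torsion pair $(\X(\p),\Y(\p))$ and then derive an $\Ext^2$ that contradicts Lemma~\ref{lem:splits}. By Lemma~\ref{lem:left-right}(b), the indecomposable projective $P(\ell) = \Hom_\DA(\p,\p_L^{(\ell)})$ lies in $\X(\p)$, and by Lemma~\ref{lem:left-right}(b) the indecomposable injective $I(r) = \Hom_\DA(\p,\nu\p_R^{(r)})$ lies in $\Y(\p)$. I would then build, from the path $\gamma$, a $B$-module $W$ of Loewy length roughly $n$ whose top is $S(\ell)$ (the simple at $\ell$) and whose socle is $S(r)$; more precisely, I would take $W$ to be the representation of $Q_B$ supported on the path $\gamma$. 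Since the arrows at the $V_L$ end go $\ell \to v_1$ and Lemma~\ref{lem:qb-ver1}(b) identifies the $LM$-part of $Q_B$ with the quiver of the hereditary algebra $\End_A(H^{-1}(\nu\p))$, and the $MR$-part with that of $\End_A(H^0(\p))$, the module $W$ has a two-step structure: a submodule $W'$ living over $Q_B^{LM}$ with top containing $S(\ell)$, and a quotient living over $Q_B^{MR}$ with socle $S(r)$. Using Lemma~\ref{lem:left-right}(c) (projective cover in $\add\Hom_\DA(\p,\p_L)$ forces membership in $\X(\p)$) applied to the $\X$-part, and Lemma~\ref{lem:left-right}(d) dually, I would arrange that $S(\ell)$ contributes to $\X(\p)$ while $S(r)$ contributes to $\Y(\p)$, and that the extension realized by the path $\gamma$ produces a nonzero class in $\Ext^2_B(X,Y)$ for suitable $X\in\X(\p)$, $Y\in\Y(\p)$. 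Concretely, splice the two short exact sequences coming from the top-quotient and the socle-sub of $W$: a nonzero $\gamma$ means the Yoneda splice $0\to S(r)\to W/(\cdots)\to \cdots\to S(\ell)\to 0$ does not collapse, giving $\Ext^2_B(S(\ell),S(r))\neq 0$, with $S(\ell)\in\X(\p)$ and $S(r)\in\Y(\p)$ because the torsion pair is split and these simples lie on the appropriate sides by the projective-cover / injective-envelope criteria. This contradicts Lemma~\ref{lem:splits}.

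The main obstacle I anticipate is the bookkeeping that makes the $\Ext^2$ class genuinely nonzero: a single path in the quiver does not automatically give a nonsplit $2$-extension between the endpoint simples, because the intermediate module $W$ may have the extension ``built in'' in a way that splits off. To handle this I would not use the full path-module $W$ but rather peel off a minimal piece: choose $\gamma$ minimal and use monomiality (Lemma~\ref{lem:qb-ver1}(c)) to see that the uniserial-like module on the first two arrows $\ell\to v_1\to v_2$ has a nonsplit structure, then do an inductive splicing along $\gamma$, at each stage invoking that $\X(\p)$ is closed under quotients and $\Y(\p)$ under submodules (Lemma~\ref{lem:left-right}) to keep the ends on the correct sides of the split torsion pair. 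Alternatively — and this may be cleaner — I would invoke Proposition~\ref{prop:basic}(d)/(c) style reasoning \emph{dually}: realize that a path $V_L\to V_R$ forces a simple $B$-module with both $\pd > 1$ and $\id > 1$, contradicting that $B$ is shod (which we already know from $(\T(\p),\F(\p))$ being split, via Proposition~\ref{prop:basic}(c) applied to $(\X(\p),\Y(\p))$); the path from $\ell\in V_L$ to $r\in V_R$ would exhibit such a simple sitting strictly between the projective-dimension-$\le 1$ part and the injective-dimension-$\le 1$ part. Either route reduces to the same homological contradiction; I would present whichever makes the minimality argument shortest, most likely the direct $\Ext^2_B(\X(\p),\Y(\p))=0$ contradiction via Lemma~\ref{lem:splits}.
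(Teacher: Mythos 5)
Your overall strategy matches the paper's: take a minimal path from $V_L$ to $V_R$, deduce $\Ext^2_B(S_1,S_2)\neq 0$ for the end-vertex simples, place $S_1\in\X(\p)$ and $S_2\in\Y(\p)$ via Lemma~\ref{lem:left-right}(c,d), and contradict Lemma~\ref{lem:splits}. However, the crucial middle step — producing the nonzero $\Ext^2$ — is not correctly handled, and this is where your proposal has a genuine gap. The ``representation of $Q_B$ supported on the path $\gamma$'' that you want to use is not a $B$-module: if $\gamma$ is a minimal relation, then $\gamma\in J_B$, so the uniserial module in which every arrow along $\gamma$ acts nontrivially would have the full composite acting by a nonzero map, violating $\gamma\in J_B$. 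You do flag this obstacle yourself, but the proposed fix (``inductive splicing,'' ``peel off a minimal piece'') is left entirely vague, and it is exactly the content that needs to be proved. The paper avoids this by a citation: Proposition~3.4 of Buan--Iyama--Reiten--Smith asserts directly that a minimal relation between vertices $v_1,v_2$ forces $\Ext^2_B(S_1,S_2)\neq 0$ (the minimality you carefully set up is precisely what is needed for that result). Once that fact is in hand, the rest of your argument goes through verbatim.

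Your alternative route is also not sound as stated: you propose to use ``Proposition~\ref{prop:basic}(c) applied to $(\X(\p),\Y(\p))$'' to conclude $B$ is shod, but Proposition~\ref{prop:basic} is hypothesized for a \emph{hereditary} algebra $A$, which is not the situation here ($A$ is only assumed shod, and $B=\End_\DA(\p)$); so you cannot invoke it to get the split structure on $(\X(\p),\Y(\p))$ or to conclude $B$ is shod at this stage of the argument. The direct $\Ext^2$ route with the BIRS reference is the one that actually closes the gap.
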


\begin{proof}
Assume there is a path $p$ from $v_1$ in $V_L$ to $v_2$ in $V_R$, and assume
it contains no proper subpath from $V_L$ to $V_R$. Then, by
Lemma \ref{lem:qb-ver1} (c), it follows that $p$ is a minimal relation. Let $S_i$ for $i=1,2$
be the simple $B$-module corresponding to vertex $v_i$.
Then $\Ext^2_B(S_1, S_2) \neq 0$, by \cite[Proposition~3.4]{birs} (note that we use right modules but they use left modules).

By Lemma \ref{lem:left-right} (c,d), we have that $S_1$ is in $\X(\p)$
and $S_2$ is in $\Y(\p)$.
Now the claim follows from Lemma \ref{lem:splits}.
\end{proof}

Summarizing, we have the following
description of $Q_B$ and $J_B$.

\begin{lemma}\label{lem:ac}
The quiver $Q_B$ is acyclic and the ideal $J_B$ is generated by the paths from $V_R$ to $V_L$.
\end{lemma}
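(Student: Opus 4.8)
The plan is to combine the results obtained so far. From Lemma~\ref{lem:nopath} we already know there are no paths from $V_L$ to $V_R$ in $Q_B$; by the symmetry in the construction — the algebra $\End_A(H^{-1}(\nu\p))$ plays the dual role to $\End_A(H^0(\p))$, via the dual of Lemma~\ref{lem:surj} and the dual parts of Lemmas~\ref{lem:left-right} and \ref{lem:splits} — the same argument shows there are no paths from $V_R$ to $V_L$ either. Wait: that would make $Q_B$ disconnected between the $L$ and $R$ parts, which is too strong; the correct reading is that Lemma~\ref{lem:nopath} is what we prove, and the \emph{dual} statement is exactly the one we do \emph{not} get for free — instead the absence of paths $V_R\to V_L$ is what must fail or be analyzed differently. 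Let me reconsider: the honest structure is that Lemma~\ref{lem:nopath} kills paths $V_L\to V_R$, while paths $V_R\to V_L$ are permitted (they generate $J_B$). So acyclicity must come from ruling out oriented cycles, and any oriented cycle, by Lemma~\ref{lem:qb-ver1}(a) together with Lemma~\ref{lem:nopath}, would have to lie entirely inside $Q_B^{LM}$ or entirely inside $Q_B^{MR}$ (a cycle cannot pass between $V_L$ and $V_R$ in either direction once paths $V_L\to V_R$ are forbidden and paths the other way, while present, cannot be closed up without a returning path $V_L\to V_R$).

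So the key steps, in order, are as follows. First, I would record the dual of Lemma~\ref{lem:nopath}: wait — actually we should not claim a dual of Lemma~\ref{lem:nopath}; rather, I would argue directly that any oriented cycle $c$ in $Q_B$ avoids using both a vertex of $V_L$ and a vertex of $V_R$. Indeed, if $c$ visited some $v_1\in V_L$ and some $v_2\in V_R$, then $c$ contains a subpath from $v_1$ to $v_2$, contradicting Lemma~\ref{lem:nopath}. Hence $c$ lies in $Q_B^{LM}$ or in $Q_B^{MR}$. Second, by Lemma~\ref{lem:qb-ver1}(b), $Q_B^{MR}\cong Q_{\End_A(H^0(\p))}$ and $Q_B^{LM}\cong Q_{\End_A(H^{-1}(\nu\p))}$, and both of these algebras are hereditary by Lemma~\ref{lem:here}, hence have acyclic quivers. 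Therefore $c$ cannot exist, and $Q_B$ is acyclic. Third, for the statement about $J_B$: by Lemma~\ref{lem:qb-ver1}(c), $J_B$ is generated by paths from $V_L$ to $V_R$ and paths from $V_R$ to $V_L$; by Lemma~\ref{lem:nopath} there are no paths from $V_L$ to $V_R$ at all, so the first family is empty and $J_B$ is generated by the paths from $V_R$ to $V_L$.

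I do not expect any step here to be a genuine obstacle; the content has been front-loaded into Lemmas~\ref{lem:here}, \ref{lem:qb-ver1}, and \ref{lem:nopath}, and this lemma is the bookkeeping that assembles them. The one point requiring a moment's care is the reduction of an arbitrary oriented cycle to one living inside $Q_B^{LM}$ or $Q_B^{MR}$: one must note that a cycle through a vertex of $V_L$ and a vertex of $V_R$ yields, by reading the cycle starting at the $V_L$-vertex, an honest path terminating at the $V_R$-vertex, which is precisely what Lemma~\ref{lem:nopath} forbids; and that a cycle avoiding $V_R$ is a cycle in $Q_B^{LM}$ while a cycle avoiding $V_L$ is a cycle in $Q_B^{MR}$, so in all cases we reach a contradiction with the hereditariness established in Lemma~\ref{lem:here}.
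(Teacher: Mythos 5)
Your argument is correct and is essentially the paper's: any oriented cycle meeting both $V_L$ and $V_R$ would contain a path from $V_L$ to $V_R$ (forbidden by Lemma~\ref{lem:nopath}), so it lives in $Q_B^{LM}$ or $Q_B^{MR}$, which are acyclic by Lemmas~\ref{lem:here} and~\ref{lem:qb-ver1}(b); the description of $J_B$ then drops out of Lemma~\ref{lem:qb-ver1}(c) combined with Lemma~\ref{lem:nopath}. The only difference is presentational --- you spell out the reduction of a general cycle to a cycle in one of the two subquivers, which the paper leaves implicit.
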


\begin{proof}
By Lemma~\ref{lem:here} and \ref{lem:qb-ver1}(b),
there are no cycles in the subquivers $Q_B^{LM}$ and $Q_B^{MR}$.
On the other hand, there are no paths from $V_L$ to $V_R$ by Lemma~\ref{lem:nopath}.
Hence, there are no cycles in $Q_B$.
It follows from Lemma~\ref{lem:qb-ver1}(c) and Lemma~\ref{lem:nopath}
that $J_B$ is generated by the paths from $V_R$ to $V_L$.
\end{proof}

The following facts, see \cite[Theorem~0.5 and Proposition~1.1]{air}, will be crucial.
Recall that a torsion pair $(\T,\F)$ in $\mod \Lambda$ is called functorially finite
if both $\T$ and $\F$ are functorially finite subcategories.

\begin{proposition}\label{prop:air}
Let $(\T,\F)$ be a torsion pair in
$\mod \Lambda$, for a finite dimensional algebra
$\Lambda$.
\begin{itemize}
\item[(a)] $(\T,\F)$ is functorially finite
if and only if there is a 2-term silting complex $\c$
in $K^b(\proj \Lambda)$, such that
$(\T,\F) = (\T(\c),\F(\c))$.
\item[(b)] $(\T,\F)$ is functorially finite
if and only if $\T = \Fac U$, where $U$ is $\Ext$-projective in $\T$.
\end{itemize}
\end{proposition}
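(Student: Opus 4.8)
\emph{Overview.} This is essentially \cite[Theorem~0.5, Proposition~1.1]{air}, so the plan is to recall the Adachi--Iyama--Reiten correspondence between functorially finite torsion pairs and $2$-term silting complexes. I would prove (b) first, and then derive (a) from it together with an explicit construction.

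\emph{Proof of (b).} For the implication ``$\T=\Fac U$ with $U$ $\Ext$-projective in $\T$ $\Rightarrow$ $(\T,\F)$ functorially finite'': first recall that $\Fac U$ is always covariantly finite in $\mod\Lambda$ (a standard fact for artin algebras; one combines a left $\add U$-approximation with a surjection to obtain a left $\Fac U$-approximation). For contravariant finiteness one uses that $\Ext^1_\Lambda(U,\Fac U)=0$ forces $\Fac U$ to be closed under extensions: given $0\to A\to B\to C\to 0$ with $A,C\in\Fac U$, lift a surjection $U^n\twoheadrightarrow C$ along $B\to C$ and add it to a surjection onto $A$ to realize $B$ as a quotient of a module in $\add U$. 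Hence $\T$ is a torsion class, and then for any $M\in\mod\Lambda$ the inclusion $tM\hookrightarrow M$ of the torsion submodule is a right $\T$-approximation, because any map $T'\to M$ with $T'\in\T$ has image a quotient of $T'$, hence in $\T$, hence contained in $tM$. Conversely, if $(\T,\F)$ is functorially finite, choose for each indecomposable projective $P$ a minimal left $\T$-approximation $g_P\colon P\to U_P$ and set $U=\bigoplus_P U_P$. Then $\T=\Fac U$: any $X\in\T$ receives an epimorphism from a projective $P'$, which factors through a left $\T$-approximation $P'\to U'\in\add U$, so $U'\to X$ is still epi. And $U$ is $\Ext$-projective in $\T$: applying $\Hom_\Lambda(-,N)$ for $N\in\T$ to the short exact sequences extracted from $g_P$ and using that $P$ is projective, that $g_P$ is a left $\T$-approximation, and the minimality of $g_P$, one deduces $\Ext^1_\Lambda(U_P,N)=0$. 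This last computation is the technical heart of the direction.

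\emph{Proof of (a).} If $\c$ is a $2$-term silting complex in $K^b(\proj\Lambda)$, then by Proposition~\ref{prop:tf}(a),(b) (applied with the base algebra being $\Lambda$) we have $\T(\c)=\Fac H^0(\c)$ with $H^0(\c)$ $\Ext$-projective in $\T(\c)$, so $(\T(\c),\F(\c))$ is functorially finite by part (b). For the converse, given a functorially finite torsion pair $(\T,\F)$, take the module $U$ produced in (b), a minimal projective presentation $P^{-1}\xrightarrow{\;q\;}P^0\to U\to 0$, and set
\[\c=\bigl(P^{-1}\xrightarrow{\;q\;}P^0\bigr)\ \oplus\ \bigoplus\{Q[1]\mid Q\text{ indecomposable projective},\ \Hom_\Lambda(Q,U)=0\}.\]
One then checks that $\Hom_{K^b(\proj\Lambda)}(\c,\c[i])=0$ for $i>0$, using the $\Ext$-projectivity of $U$ in $\T=\Fac U$ for the degree-$1$ self-extensions of the presentation part and $\Hom_\Lambda(Q,U)=0$ for the interaction with the shifted projective summands; that $H^0(\c)=U$, so that $\T(\c)=\Fac U=\T$ and hence $\F(\c)=\F$; and finally that $\c$ generates $K^b(\proj\Lambda)$ as a triangulated category.

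\emph{Main obstacle.} The difficult point is the last assertion: that the presilting complex $\c$ above is actually silting. In \cite{air} this is obtained by showing that an $\Ext$-projective generator of a functorially finite torsion class has a controlled number of indecomposable summands, so that $\c$ (whose total number of indecomposable summands is the number $n$ of simple $\Lambda$-modules, after adjoining the shifted projectives) is a maximal presilting complex and therefore silting; together with the $\Ext$-projectivity computation in the proof of (b), this is the substance of the Adachi--Iyama--Reiten theory, and I would invoke \cite{air} for it rather than reproduce it here.
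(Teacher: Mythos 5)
The paper offers no proof of this proposition: it simply cites \cite[Theorem~0.5 and Proposition~1.1]{air} for both parts. Your sketch is a correct outline of the Adachi--Iyama--Reiten argument (minimal left $\T$-approximations of the indecomposable projectives to manufacture the $\Ext$-projective generator $U$, then the minimal projective presentation of $U$ together with the shifted projectives $Q[1]$ with $\Hom_\Lambda(Q,U)=0$ to build the $2$-term complex $\c$), and you correctly isolate the two genuinely technical points --- $\Ext$-projectivity of $U$ and the fact that the presilting complex $\c$ is actually silting --- which you then, like the paper, defer to \cite{air}. So the two ``proofs'' coincide in substance; yours simply unpacks more of the mechanism before citing the same source.
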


We need one more observation before finishing the proof
of Proposition~\ref{prop:key}.

\begin{lemma}\label{lem:xy}
We have
 $\X(\p)  \subset\mod\End_A(H^{-1}(\nu \p))$
 and  $\Y(\p)  \subset\mod\End_A(H^{0}(\p))$.
\end{lemma}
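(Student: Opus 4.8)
The plan is to identify the two module subcategories on the nose using Lemma~\ref{lem:surj} and Lemma~\ref{lem:qb-ver1}. Recall from Lemma~\ref{lem:surj} that $B_1 := \End_A(H^{-1}(\nu\p))$ is the quotient of $B$ by the two-sided ideal $Be_RB$, so $\mod B_1$ sits inside $\mod B$ as the full subcategory of modules annihilated by $e_R$, i.e. the $B$-modules $N$ with $Ne_R = 0$; equivalently, those $N$ whose top (or equivalently whose socle, or composition factors) involve no simple $S_v$ with $v \in V_R$. Dually, $\mod B_0$ with $B_0 := \End_A(H^0(\p))$ is the full subcategory of $B$-modules annihilated by $e_L$. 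So the statement to prove is exactly: every module in $\X(\p)$ is annihilated by $e_R$, and every module in $\Y(\p)$ is annihilated by $e_L$.

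First I would prove the second inclusion, $\Y(\p)\subset\mod B_0$. By Lemma~\ref{lem:left-right}(b), the injective $B$-module $I := \Hom_{\DA}(\p,\nu\p_R)$ lies in $\Y(\p)$, and since $\Y(\p)$ is closed under submodules, the simple socle of each indecomposable summand of $I$ lies in $\Y(\p)$. These simple summands are exactly the simple $B$-modules $S_v$ with $v\in V_R$. Wait — I actually want the complementary statement, that the simples indexed by $V_L$ are \emph{not} in $\Y(\p)$: by Lemma~\ref{lem:left-right}(b), $\Hom_{\DA}(\p,\p_L)$ is a projective $B$-module lying in $\X(\p)$, and $\X(\p)$ is closed under factor modules, so each simple $S_v$ with $v\in V_L$ is a quotient of a summand of $\Hom_{\DA}(\p,\p_L)$ and hence lies in $\X(\p)$. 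Since $(\X(\p),\Y(\p))$ is a torsion pair (so $\Hom_B(\X(\p),\Y(\p))=0$) and the $S_v$ are simple, no $S_v$ with $v\in V_L$ can occur as a composition factor of any module in $\Y(\p)$: more precisely, if $Y\in\Y(\p)$ had $Ye_L\neq 0$ then $Y$ would have a subquotient isomorphic to some $S_v$, $v\in V_L$; but $\Y(\p)$ is closed under submodules and factor modules (being one half of a torsion pair it is at least closed under submodules, and I need the quotient part), so one would get $S_v\in\Y(\p)$, contradicting $S_v\in\X(\p)$ and the orthogonality. Thus $Ye_L=0$, i.e. $Y\in\mod B_0$. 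The inclusion $\X(\p)\subset\mod B_1$ is completely dual, using that $\Y(\p)$ is closed under submodules and that $\Hom_{\DA}(\p,\nu\p_R)\in\Y(\p)$ forces each $S_v$, $v\in V_R$, into $\Y(\p)$, hence out of any module in $\X(\p)$.

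The one technical point I must be careful about is the claim that $\Y(\p)$ is closed under factor modules (and $\X(\p)$ under submodules) — a torsion class is automatically closed under quotients and extensions, and a torsionfree class under submodules and extensions, so in fact $\Y(\p)$ being the torsionfree class is closed under submodules, and $\X(\p)$ the torsion class is closed under quotients; this is exactly what I used above for the simples, and it is all I need: to detect $Ye_L=0$ it suffices to rule out $S_v$, $v\in V_L$, as a \emph{subquotient}, and any subquotient of $Y$ lies in $\add$ of things obtained by one submodule-then-one-quotient operation, so I argue via a submodule $Y'\subseteq Y$ having $S_v$ as a quotient; then $Y'\in\Y(\p)$ and $S_v\in\Y(\p)$, contradiction. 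Hence the main (mild) obstacle is just keeping the torsion/torsionfree closure directions straight; everything else is bookkeeping with the idempotents $e_L,e_R$ and Lemmas~\ref{lem:surj}, \ref{lem:qb-ver1}(a), and \ref{lem:left-right}(b).
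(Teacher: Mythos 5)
Your proof has a genuine gap in the step where you pass from ``$Y \in \Y(\p)$ has $S_v$ ($v \in V_L$) as a subquotient'' to ``$S_v \in \Y(\p)$''. You correctly identify that you can take a submodule $Y' \subseteq Y$ with $S_v$ as a quotient, and $Y' \in \Y(\p)$ because torsionfree classes are closed under submodules. But then you need $S_v \in \Y(\p)$, and $S_v$ is a \emph{quotient} of $Y'$: a torsionfree class is not closed under quotients, so this step is unjustified. In fact it is false in general that a torsionfree module has all its composition factors torsionfree: for $\k(1\to 2)$ with $\T = \add S_1$ and $\F = \add(P_1\oplus S_2)$, the module $P_1\in\F$ has $S_1\in\T$ as a quotient. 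So the purely torsion-theoretic bookkeeping you are attempting cannot close the argument, and you flagged this yourself (``I need the quotient part'') but then asserted the wrong closure direction anyway. (The split/functorial-finiteness hypotheses in this section do not rescue it either; the example above is already a split torsion pair.)

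The paper's proof of Lemma~\ref{lem:xy} takes a different, more concrete route that does not try to track composition factors. By Lemma~\ref{lem:surj}, the kernel of $B \to \End_A(H^{-1}(\nu\p))$ consists of endomorphisms $\gamma$ of $\p$ that factor through $\add\p_R$, so it suffices to show that such a $\gamma$ acts as zero on any $X\in\X(\p)$. Write $X = \Hom_\DA(\p, M[1])$ with $M\in\F(\p)$. The right $B$-action is precomposition, and any map $\p \to M[1]$ precomposed with $\gamma$ factors through some $\Hom_\DA(\p', M[1])$ with $\p'\in\add\p_R$; but $\p_R$ is a projective $A$-module concentrated in degree $0$, so $\Hom_\DA(\p_R, M[1]) = \Ext^1_A(\p_R, M) = 0$. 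If you want to repair your argument in this spirit, show directly that $X e_R = 0$ by noting $e_R$ corresponds to the identity of $\p_R$, which trivially factors through $\add\p_R$; you cannot get away with torsion-pair orthogonality alone.
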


\begin{proof}
We only prove the first inclusion, the second can
be proved dually.
 By the second part of Lemma \ref{lem:surj}, we only need
to prove that for any element $\gamma$ in $\End_{\DA}(\p)$
which factors through $\add \p_R$, we have $X\gamma =0$
for any $X$ in $\X(\p)$. This holds since $\X(\p) = \Hom_{\DA}(\p, \F(\p)[1])$ and clearly $\Hom_{\DA}(\p_R, M[1]) = 0$ for any
 $A$-module $M$.
\end{proof}

\begin{proof}[Proof of Proposition \ref{prop:key}]
Let $\p$ be a 2-term silting complex in $K^b(\proj A)$ such that the induced torsion pair
$(\T(\p),\F(\p))$ satisfies condition (d) in Proposition~\ref{prop:CL} and let $B=\End_\DA(\p)=\k Q_B/J_B$. By Lemma~\ref{lem:ac},  the quiver $Q_B$ is acyclic and the ideal $J_B$ is generated by the paths from the vertices in $V_R$ to the vertices in $V_L$. Let $H=\k Q_B$.  We then have an induced embedding $\mod B\subset\mod H$. We claim that $(\X(\p),\Y(\p))$ is also a torsion pair in $\mod H$.


For any $H$-module $M$, we only need to prove that $M$ is in $\X(\p)\ast\Y(\p)$.
We first note that $Me_LH$ is in $\X(\p)$
by Lemma \ref{lem:left-right}(c). Consider
the short exact sequence
\begin{equation}\label{M-seq}
      0\rightarrow Me_LH\rightarrow M\rightarrow M/Me_LH\rightarrow 0.
\end{equation}
By the description of $J_B$
it is clear that $MJ_B \subset Me_LH$, and hence that
$N= M/Me_LH$ is in $\mod B$.
Now, using that $(\X(\p),\Y(\p))$ is a torsion pair in $\mod B$,
we have $N\in\X(\p)\ast\Y(\p)$.
Since $\X(\p)$ is closed under extensions in $\mod B$,
by Lemma~\ref{lem:xy},
it is also an extension-closed subcategory of $\mod\End_A(H^{-1}(\nu\p))$.
By Lemma~\ref{lem:here}, Lemma~\ref{lem:qb-ver1}(b) and the definition of $H$, we have
$\End_A(H^{-1}(\nu\p))\cong H/He_RH$.
Hence $\X(\p)$ is also closed under extensions in $\mod H$.
Using the sequence \eqref{M-seq},
we have that $M\in\X(\p)\ast\X(\p)\ast\Y(\p)=\X(\p)\ast\Y(\p)$.
Therefore, $(\X(\p),\Y(\p))$ is also a torsion pair in $\mod H$.

We also claim that for any $X\in\X(\p)$ and $Y\in\Y(\p)$, we have a functorial isomorphism $\Ext^1_H(X,Y)\cong\Ext^1_B(X,Y)$.
For this, it is sufficient to prove that for any short exact sequence in $\mod H$:
      \[0\rightarrow Y\rightarrow E\rightarrow X\rightarrow 0,\]
we have that $E\in\mod B$. For any vertex $v_r$ in $V_R$,
we have $Xe_{v_r} = 0$, by Lemma \ref{lem:xy}, and
similarly $Ye_{v_l} = 0$ for any vertex $v_l$ in $V_L$.
For an arbitrary path $e_{v_r} p e_{v_l}$ in $J_B$, we
then have that $X(e_{v_r} p e_{v_l}) = 0$, so
$E(e_{v_r} p e_{v_l}) \subset Y e_{v_l} = 0$.
Hence $EJ_B =0$, and $E$ is in $\mod B$.

The torsion pair $(\X(\p),\Y(\p))$ is a functorially finite torsion pair
in $\mod B$, by \cite[Corollary~3.9]{bz}, and it then follows from Proposition \ref{prop:air}(b) that it is also
functorially finite in $\mod H$.

Hence, by Proposition
\ref{prop:air}(a), it follows that there is a 2-term
silting complex $\rr$ in $K^b(\proj H)$ such that
$(\T(\rr),\F(\rr)) = (\X(\p),\Y(\p))$.
Since $H$ is hereditary, the torsion pair
$(\X(\rr),\Y(\rr))$ is split by Proposition~\ref{prop:basic}(a).
By the silting theorem, we have $\X(\rr) \simeq \F(\rr)=\Y(\p) \simeq \T(\p)$ and
similarly $\Y(\rr) \simeq \F(\p)$.



So we have split torsion pairs $(\T(\p),\F(\p))$ in $\mod A$, and $(\X(\rr),\Y(\rr))$ in $\mod \End_{D^b(H)}(\rr)$.
We claim that we actually have
$\mod A \simeq \mod \End_{D^b(H)}(\rr)$.
For this, we need in addition a functorial isomorphism
$\Hom_{\End_{D^b(H)}(\rr)}(\Y(\rr), \X(\rr)) \cong \Hom_A(\F(\p), \T(\p))$. Indeed, we have
\begin{align*}
\Hom_A(\F(\p), \T(\p)) & \cong \Hom_{\DA}(\F(\p)[1][-1], \T(\p))  \\
& \cong \Ext^1_{B}(\X(\p), \Y(\p)) \\
& \cong \Ext^1_{H}(\X(\p), \Y(\p)) \\
& = \Ext^1_{H}(\T(\rr),\F(\rr)) \\
&\cong \Hom_{D^b(H)}(\T(\rr), \F(\rr)[1]) \\
& \cong \Hom_{\End_{D^b(H)}(\rr)}(\Y(\rr), \X(\rr))
\end{align*}
It now follows that $\mod A \simeq \mod \End_{D^b(H)}(\rr)$.
Hence $A \cong \End_{D^b(H)}(\rr)$, and we have proved
that $A$ is silted.
\end{proof}

The following lemma gives a sufficient condition for
a finite dimensional algebra to be tilted.

\begin{lemma}\label{lem:tilted}
Let $A$ be a finite dimensional algebra and $\p$ a 2-term silting complex, such that the condition of Proposition \ref{prop:key} holds. If, in addition,
 $\T(\p)$ contains all the injective $A$-modules, or $\F(\p)$ contains all the projective $A$-modules, then $A$ is a tilted algebra.
\end{lemma}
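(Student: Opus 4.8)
The plan is to combine Proposition~\ref{prop:key}, which already tells us that $A$ is silted, with the extra hypothesis to upgrade the silting complex $\rr\in K^b(\proj H)$ constructed in that proof to an honest tilting $H$-module (in degree $0$), so that $A=\End_{D^b(H)}(\rr)$ becomes a tilted algebra in the classical sense. Recall from the proof of Proposition~\ref{prop:key} that $(\T(\rr),\F(\rr))=(\X(\p),\Y(\p))$ as torsion pairs in $\mod H$, and that under the silting theorem $\Y(\rr)\simeq\F(\p)$ and $\X(\rr)\simeq\Y(\p)\simeq\T(\p)$. Since $H$ is hereditary, $\rr$ being a $2$-term silting complex concentrated in degrees $0$ and $-1$ is a tilting module precisely when it has no summand of the form $P[1]$; by Lemma~\ref{lem:summand}, equivalently when $\Hom_H(P,H^0(\rr))=0$ forces $P=0$, i.e. when $\Fac H^0(\rr)=\T(\rr)$ contains every indecomposable non-projective-shifted object, or more usefully when $\F(\rr)$ contains no nonzero projective-shifted summand.

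First I would translate the hypothesis across the silting theorem. Suppose $\F(\p)$ contains all projective $A$-modules. Since $\Y(\rr)\simeq\F(\p)$ under the equivalence $\Hom_{D^b(H)}(\rr,-)$ composed with the earlier equivalences, and since by Proposition~\ref{prop:basic}(a) the torsion pair $(\X(\rr),\Y(\rr))$ is split with $A\cong\End_{D^b(H)}(\rr)$, the condition that $\F(\p)$ contain all projectives should correspond, via the description of $(\X(\rr),\Y(\rr))$ and Proposition~\ref{prop:basic}(c), to the statement that $\X(\rr)$ contains all injective $A$-modules; equivalently, on the $H$-side, that $\T(\rr)=\X(\p)$ contains all injective $H$-modules — this last point is where Lemma~\ref{lem:xy} and the identification $\End_A(H^{-1}(\nu\p))\cong H/He_RH$ from the proof of Proposition~\ref{prop:key} come in, since the injective $H$-modules decompose according to whether their socle lies at a vertex of $V_L$, $V_M$ or $V_R$. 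The dual hypothesis, that $\T(\p)$ contain all injectives, should symmetrically give that $\F(\rr)$ contains all projective $H$-modules. In either case I would then invoke the elementary fact that for a $2$-term silting complex $\rr$ over a hereditary algebra $H$, if $\F(\rr)$ contains all the projective $H$-modules then $\rr$ has no summand $P[1]$ with $P\neq0$: indeed such a summand would give $P[1]\in\add\rr$, and $\Hom_{D^b(H)}(\rr,P[1])=0$ would be contradicted against $\Hom_{D^b(H)}(P[1],P[1])\neq0$ together with the fact that a nonzero projective $P\in\F(\rr)=\Y(\rr)$ satisfies $\Hom(\rr,P)=0$ while $P$ is a summand of $H^0$ of... — more cleanly, $P\in\F(\rr)$ means $\Hom_{D^b(H)}(\rr,P)=0$, but if simultaneously $P[1]\mid\rr$ then the summand $P$ of $H^{-1}(\rr)$ forces $P\in\add H^{-1}(\rr)$, and one checks $\Hom_H(P,P)\hookrightarrow\Hom_{D^b(H)}(\rr,P)$ is nonzero, a contradiction. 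So $\rr$ is a tilting complex with zero differential, i.e. a tilting $H$-module, and hence $A\cong\End_{D^b(H)}(\rr)=\End_H(\rr)$ is tilted.

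The main obstacle I anticipate is the bookkeeping in the first step: correctly matching up which of the two symmetric hypotheses ("$\T(\p)$ contains all injectives" versus "$\F(\p)$ contains all projectives") produces which conclusion on the $H$-side ("$\F(\rr)$ contains all projective $H$-modules" versus "$\T(\rr)$ contains all injective $H$-modules"), keeping straight the two applications of the silting theorem (one for $\p$ over $A$, one for $\rr$ over $H$) and the chain of equivalences $\T(\p)\simeq\X(\rr)$, $\F(\p)\simeq\Y(\rr)$, together with the identifications $\End_A(H^0(\p))\cong H/He_LH$ and $\End_A(H^{-1}(\nu\p))\cong H/He_RH$ from the proof of Proposition~\ref{prop:key}. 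Once the correspondence is pinned down, the final implication "$\F(\rr)\supseteq\proj H$ (or the injective analogue) $\Rightarrow$ $\rr$ is a tilting module $\Rightarrow$ $A$ tilted" is routine via Lemma~\ref{lem:summand} and the classical definition of tilted algebra. I would also remark that the two hypotheses are exchanged by the duality $D$ (equivalently by passing from $\p$ to $\nu\p$ and from $H$ to $H^{\op}$), so it genuinely suffices to treat one of them and quote duality for the other.
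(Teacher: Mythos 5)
Your approach detours through the hereditary algebra $H$ and the silting complex $\rr$ constructed in Proposition~\ref{prop:key}, and the argument breaks at the crucial step. The claim that ``$\F(\rr)$ contains all projective $H$-modules implies $\rr$ has no summand $P[1]$'' is false: take $\rr = H[1]$; then $\T(\rr)=0$, $\F(\rr)=\mod H\supseteq\proj H$, yet every indecomposable summand of $\rr$ is of the form $P[1]$. The purported contradiction is also computed incorrectly: if $\rr = P[1]\oplus\rr'$, then $\Hom_{D^b(H)}(\rr,P)=\Hom_{D^b(H)}(P[1],P)\oplus\Hom_{D^b(H)}(\rr',P)$, and the first summand is $\Hom_H(P,P[-1])=0$, not $\Hom_H(P,P)$; so there is no embedding $\Hom_H(P,P)\hookrightarrow\Hom_{D^b(H)}(\rr,P)$ and no contradiction. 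The earlier ``translation'' step is also suspect: the equivalence $\mod A\simeq\mod\End_{D^b(H)}(\rr)$ carries projectives to projectives, so ``$\F(\p)\supseteq\proj A$'' transports to ``$\X(\rr)$ contains all projective $\End(\rr)$-modules,'' not to a statement about injectives or about $\F(\rr)$ versus $\proj H$ as you assert.

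The paper's proof stays entirely on the $A$-side and is much shorter. From ``$\T(\p)$ contains all injective $A$-modules'' one gets $\nu\p_L[-1]\in\T(\p)$, hence $0=\Hom_{\DA}(\p,\nu\p_L)\cong D\Hom_{\DA}(\p_L,\p)$, forcing $\p_L=0$. Then $H^0(\p)$ has $|A|$ nonisomorphic indecomposable summands; it lies in $\T(\p)$, so $\id H^0(\p)\leq 1$ by the shod hypothesis, and it is Ext-projective in $\T(\p)$, so $\Ext^1_A(H^0(\p),H^0(\p))=0$. Thus $H^0(\p)$ is a cotilting $A$-module, and by Lemma~\ref{lem:here} its endomorphism algebra is hereditary, so $A$ is tilted; the other hypothesis is handled dually. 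If you want to salvage your approach, the correct transport of ``$\F(\p)\supseteq\proj A$'' is $H^0(\rr)=0$, i.e.\ $\rr\cong H[1]$ and $A\cong H$ is hereditary (in fact the paper's argument already gives $\p_R=0$ and hence $B=H$ in this case) — but that is a roundabout way to reach the conclusion, and in any case the reasoning you actually gave does not establish it.
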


\begin{proof}
Assume $\T(\p)$ contains all the injective $A$-modules. Then, we have $\nu \p_L[-1]\in\T(\p)$.
So $0=\Hom_\DA(\p,\nu \p_L[-1][1])=\Hom_\DA(\p,\nu \p_L) \cong
D\Hom_\DA(\p_L,\p)$. In particular, $\Hom_\DA(\p_L,\p_L)=0$.
Hence $\p_L = 0$. Then $|H^{0}(\p)|=|\p|=|A|$. Since $H^0(\p)\in\T(\p)$, we have  $\id H^0(\p)_A\leq 1$ by assumption. It is clear that $\Ext^1_A(H^0(\p),H^0(\p))=0$. So $H^0(\p)$ is a cotilting $A$-module. By Lemma~\ref{lem:here},
the algebra $\End_A(H^0(\p))$ is hereditary. Therefore, the algebra $A$ is tilted. The other case can be proved dually.
\end{proof}

Now we prove the main result in this section.

\begin{theorem}\label{thm:1-silt}
Let $A$ be a connected finite dimensional algebra over
an algebraically closed field $\k$. Then the following are equivalent:
\begin{itemize}
  \item[(a)] $A$ is a silted algebra;
  \item[(b)] there is a split functorially finite torsion pair $(\T,\F)$ in $\mod A$ such that $\id_A X\leq 1$ for any $X\in\T$ and $\pd_A Y\leq1$ for any $Y\in\F$;
  \item[(c)] $A$ is a shod algebra with $(\add(\R\setminus\L),\add\L)$ functorially finite;
  \item[(d)] $A$ is a tilted algebra or a strictly shod algebra.
\end{itemize}
In this case, the global dimension of $A$ is at most 3.
\end{theorem}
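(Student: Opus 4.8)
The plan is to establish the cycle of implications (a)$\Rightarrow$(b)$\Rightarrow$(c)$\Rightarrow$(d)$\Rightarrow$(a), drawing on the machinery already assembled in this section. The implication (a)$\Rightarrow$(b) should be essentially a restatement of what we already know: if $A=\End_{D^b(H)}(\p)$ for a 2-term silting complex $\p$ over a hereditary algebra $H$, then the torsion pair $(\X(\p),\Y(\p))$ in $\mod A$ is split by Proposition~\ref{prop:basic}(a), it is functorially finite by \cite[Corollary~3.9]{bz} (it is the torsion pair of a 2-term silting complex $\q$ by the silting theorem together with Proposition~\ref{prop:air}(a)), and it satisfies the injective/projective dimension bounds by Proposition~\ref{prop:basic}(c). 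For (b)$\Rightarrow$(c): a split torsion pair with $\id X\le 1$ on the torsion class and $\pd Y\le 1$ on the torsion-free class is exactly condition (d) of Proposition~\ref{prop:CL}, so $A$ is shod; then by Proposition~\ref{prop:CL} the canonical torsion pair $(\add(\R\setminus\L),\add\L)$ is split, and one needs to argue it is functorially finite. The natural route is to show that any split torsion pair $(\T,\F)$ satisfying the dimension bounds must coincide with $(\add(\R\setminus\L),\add\L)$, or at least sandwich it, using the characterization of $\L$ and $\R$ via predecessors and successors together with the dimension hypotheses; then functorial finiteness transfers. I expect this comparison of torsion pairs to be the first point requiring genuine care.

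For (c)$\Rightarrow$(d), the idea is: $A$ shod means $\gld A\le 3$ by Proposition~\ref{prop:basic}(d)-type reasoning (more precisely by \cite{cl}/\cite[Prop.~2.1.1]{hrs}). If $\gld A=3$ then $A$ is strictly shod by definition and we are done. If $\gld A\le 2$, then $A$ is quasi-tilted by \cite{hrs}; one must then upgrade ``quasi-tilted'' to ``tilted'' using the hypothesis that the torsion pair $(\add(\R\setminus\L),\add\L)$ is functorially finite. Here the expected mechanism is: functorial finiteness of this torsion pair, via Proposition~\ref{prop:air}, produces a 2-term silting complex realizing it, and one checks that under $\gld A\le 2$ this complex can be taken to be a genuine tilting module over a hereditary algebra — equivalently, via Lemma~\ref{lem:tilted}, that either all injectives lie in $\T$ or all projectives lie in $\F$. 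Establishing that dichotomy (or a direct argument that a connected quasi-tilted algebra with this extra finiteness is tilted) is where I expect the real work to be; it should use connectedness of $A$ essentially, since quasi-tilted-but-not-tilted algebras are governed by the presence of a ``tubular'' or canonical-type component obstructing such a section.

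Finally (d)$\Rightarrow$(a): if $A$ is strictly shod, then $A$ is shod with $\gld A=3$, so by Proposition~\ref{prop:CL}(d) there is a split torsion pair in $\mod A$ with the required dimension bounds; its functorial finiteness follows from \cite[Theorem~3.1]{rs}-type structure (the faithful double section gives an $\Ext$-progenerator for the torsion class, whence Proposition~\ref{prop:air}(b) applies), and then Proposition~\ref{prop:air}(a) yields a 2-term silting complex $\p$ with $(\T(\p),\F(\p))$ satisfying Proposition~\ref{prop:CL}(d); Proposition~\ref{prop:key} then gives that $A$ is silted. If instead $A$ is tilted, say $A=\End_H(T)$ for a tilting module $T$ over a hereditary $H$, then $T$ viewed as a stalk complex is a 2-term (indeed 1-term) silting complex in $K^b(\proj H)$, so $A$ is silted by definition. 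The closing sentence about $\gld A\le 3$ is immediate: in case (a) it is Proposition~\ref{prop:basic}(d), and it is also visible from the other equivalent conditions via \cite{hrs}. The main obstacle overall is the quasi-tilted-to-tilted step in (c)$\Rightarrow$(d); everything else is a matter of citing and assembling the results already proved.
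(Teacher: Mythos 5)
Your plan runs the cycle (a)$\Rightarrow$(b)$\Rightarrow$(c)$\Rightarrow$(d)$\Rightarrow$(a), whereas the paper runs (a)$\Rightarrow$(b)$\Rightarrow$(d)$\Rightarrow$(c)$\Rightarrow$(b)$\Rightarrow$(a). The steps you correctly flag as requiring ``genuine care'' or ``real work'' are exactly the ones where your route and the paper's diverge, and they are genuine gaps in your proposal.

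For (b)$\Rightarrow$(c): you propose to show that any split torsion pair with the dimension bounds coincides with, or sandwiches, $(\add(\R\setminus\L),\add\L)$, and then ``functorial finiteness transfers.'' This does not go through as stated: neither equality nor a sandwiching is automatic, and functorial finiteness is not inherited along containments of torsion classes. The paper sidesteps this entirely by first proving (b)$\Rightarrow$(d), using the dichotomy from Lemma~\ref{lem:tilted} (either $\F(\p)$ contains all projectives, giving tilted, or some projective lies in $\T(\p)\subset\add\R$, and then \cite[Theorem~II.3.3]{hrs} forces ``quasi-tilted $\Rightarrow$ tilted''), and only afterwards deducing (c) from (d) by citing Coelho--Happel--Unger \cite[Theorem~3.6]{chu}; (c)$\Rightarrow$(b) is then trivial.

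For (c)$\Rightarrow$(d): you correctly identify the quasi-tilted-to-tilted upgrade as the obstacle, and you do not have an argument for it. The paper does not attempt this implication directly; it is absorbed into (b)$\Rightarrow$(d) via \cite[Theorem~II.3.3]{hrs} as above. Your remark about tubular/canonical components is on the right intuitive track, but the clean statement you are missing is precisely \cite[Theorem~II.3.3]{hrs}.

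For (d)$\Rightarrow$(a): your tilted case is fine. For the strictly shod case you propose to extract functorial finiteness of the canonical torsion pair from the Reiten--Skowro\'nski double-section structure and then feed this into Proposition~\ref{prop:key}; this is plausible, but it re-proves part of \cite[Theorem~3.6]{chu} and is harder than the paper's route of (d)$\Rightarrow$(c)$\Rightarrow$(b)$\Rightarrow$(a), where (b)$\Rightarrow$(a) follows directly from Propositions~\ref{prop:key} and~\ref{prop:air}.

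In short: your identification of Proposition~\ref{prop:key} and Proposition~\ref{prop:air} as the engine is correct, and your (a)$\Rightarrow$(b) and the tilted half of (d)$\Rightarrow$(a) match the paper. The missing ingredients are the two external classification results, \cite[Theorem~II.3.3]{hrs} and \cite[Theorem~3.6]{chu}; once those are in hand, the natural ordering of the implications is (b)$\Rightarrow$(d)$\Rightarrow$(c)$\Rightarrow$(b) rather than the linear chain you chose, and the two steps you could not complete disappear.
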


\begin{proof}
(a)$\Rightarrow$(b): This follows from combining Proposition \ref{prop:basic}(a,c)
and Proposition~\ref{prop:air}(a).

\noindent (b)$\Rightarrow$(d): Assuming (b), it follows from Proposition~\ref{prop:air}(a) that there is a 2-term silting complex $\p\in K^b(\proj A)$ such that $(\T,\F)=(\T(\p),\F(\p))$. If $\F(\p)$ contains all the projective $A$-modules, then $A$ is a tilted algebra by Lemma~\ref{lem:tilted}. If there is a projective $A$-module in $\T(\p)$, since $\T(\p)$ is contained in $\add\R$, then $\R$ contains an $\Ext$-projective module. By \cite[Theorem~II.3.3]{hrs}, if $A$ is quasi-tilted, then $A$ is tilted. Thus the proof is complete.

\noindent(d)$\Rightarrow$(c): See \cite[Theorem~3.6]{chu}.

\noindent(c)$\Rightarrow$(b): This is trivial.

\noindent (b)$\Rightarrow$(a): This follows from
  combining Propositions \ref{prop:key} and
  \ref{prop:air}.

Finally, recall that it was proved in Proposition \ref{prop:basic}(e),
that the global dimension of a silted algebra is at most 3.
  \end{proof}

Note that we have now proved part (a) of Theorem \ref{main}.

\section{Double sections}\label{sec:double}

In \cite{rs}, Reiten and Skowro\'{n}ski characterized strictly shod algebras as strict {\em double tilted algebras}, which are algebras whose AR-quiver contains a strict faithful double section with certain conditions.
Let $B$ be an algebra which is tilted or strictly shod.
By the previous section, we know that
$B\cong \End_{D^b(H)}(\p)$ for a 2-term silting complex $\p$ in
the bounded derived category of
some hereditary
algebra $H$. In this section,
we will use this fact to give an alternative proof of why
$B$ has a faithful double section $\Delta$, by identifying the
modules in $\Delta$ as images of some injective or projective
$A$-modules under the functors $\Hom_{D^b(H)}(\p,-)$ or $\Hom_{D^b(H)}(\p,-[1])$.
Furthermore, we have that
$\Delta$ is a section when $B$ is tilted, while it is a strict double section when $B$ is strictly shod. The construction of the double section $\Delta$ in a silted algebra is an analogue of the construction of a section in a tilted algebra. For the latter, we refer to \cite[Section~VIII.3]{ass}.

We recall some definitions concerning AR-quivers. For an algebra $\Lambda$, denote by $\Gamma_\Lambda$ the AR-quiver of $\Lambda$ and by $\tau_\Lambda=D\text{Tr}$ and $\tau_\Lambda^{-1}=\text{Tr}D$ the AR-translations in $\Gamma_\Lambda$. A $\tau_\Lambda$-orbit of a module $M\in\mod \Lambda$ is the collection $\{\tau_\Lambda^m M\mid m\in\mathbb{Z}\}$. A path $x_0\rightarrow x_1\rightarrow \cdots \rightarrow x_{s-1}\rightarrow x_{s}$ in $\Gamma_\Lambda$ is called \emph{sectional} if there is no $i$ with $1\leq i\leq s-1$ such that $x_{i-1}=\tau_\Lambda x_{i+1}$ and is called \emph{almost sectional} if there is exactly one $i$ with $1\leq i\leq s-1$ such that $x_{i-1}=\tau_\Lambda x_{i+1}$.

Let $\cc$ be a connected component of $\Gamma_\Lambda$.  A connected full subquiver $\Delta$ of $\cc$ is called a \emph{double section} in $\cc$ if the following conditions hold:
\begin{itemize}
  \item[-] $\Delta$ is acyclic, i.e. there is no oriented cycles in $\Delta$;
  \item[-] $\Delta$ is convex, i.e. for each path $x_1\rightarrow x_2\rightarrow\cdots\rightarrow x_s$ in $\cc$ with $x_1,x_s\in\Delta$ we have $x_i\in\Delta$ for all $1\leq i\leq s$;
  \item[-] For each $\tau_\Lambda$-orbit $\oo$ in $\cc$ we have $1\leq|\Delta\cap\oo|\leq 2$;
  \item[-] If $\oo$ is a $\tau_\Lambda$-orbit in $\cc$ and $|\Delta\cap\oo|= 2$ then $\Delta\cap\oo=\{X,\tau_\Lambda X\}$ for some $X\in\cc$ and there are sectional paths $I\rightarrow\cdots\rightarrow \tau_\Lambda X$ and $X\rightarrow\cdots\rightarrow P$, with $I$ injective and $P$ projective.
\end{itemize}
A double section $\Delta$ in $\cc$ is called {\em strict} if there exists a $\tau_\Lambda$-orbit $\oo$ in $\cc$ with $|\Delta\cap\oo|= 2$ and is called a {\em section} if for any $\tau_\Lambda$-orbit $\oo$ in $\cc$ we have $|\Delta\cap\oo|= 1$. A double section is called
{\em faithful}, if the direct sum of the corresponding
modules is faithful.

Now let $B$ be a connected silted algebra, that is, $B$ is connected and there is a hereditary algebra $H$ and a 2-term silting complex $\p\in K^b(\proj H)$ such that $B=\End_\dh(\p)$. Let $F(-) =\Hom_\dh(\p,-) \colon \dh \rightarrow \mod B$.

Let $\pp$ be a complete set of non-isomorphic indecomposable projective $H$-modules. Let $\pp_l$ be the subset of $\pp$ consisting of $P$ with $P\in\add\p$ and let $\pp_r$ be the subset of $\pp$ consisting of $P$ with $P[1]\in\add\p$. It is clear that $\pp_l\cap\pp_r=\emptyset.$

\begin{lemma}\label{lem:conn}
With the above notation, the following hold. 
\begin{itemize}
  \item[(a)] For any $P\in\pp$, we have $F(P[1])\in\X(\p)$ and $F(\nu P)\in\Y(\p)$; $F(P[1])= 0$ if and only if $P\in\pp_l$; $F(\nu P)= 0$ if and only if $P\in\pp_r$.
  \item[(b)] For any $P\in\pp\setminus\left(\pp_l\cup\pp_r\right)$, we have that both of $F(\nu P)$ and $F(P[1])$ are indecomposable and there is an AR-sequence
  \[0\rightarrow F(\nu P)\rightarrow F(\nu P/\soc(\nu P)) \oplus F((\rad P)[1])\rightarrow F(P[1])\rightarrow0.\]
  In particular, in this case, $F(\nu P)$ is not injective and $F(P[1])$ is not projective.
  \item[(c)] An indecomposable $B$-module in $\X(\p)$ is projective if and only if it is isomorphic to $F(P[1])$ for $P\in\pp_r$. In this case, there is a right minimal almost split map in $\mod B$
      \[F(\nu P/\soc (\nu P))\oplus F((\rad P)[1])\rightarrow F(P[1]).\]
  \item[(d)] An indecomposable $B$-module in $\Y(\p)$ is injective if and only if it is isomorphic to $F(\nu P)$ for $P\in\pp_l$. In this case, there is a left minimal almost split map in $\mod B$
      \[F(\nu P)\rightarrow F(\nu P/\soc(\nu P))\oplus F((\rad P)[1]).\]
\end{itemize}
\end{lemma}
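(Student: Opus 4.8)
The plan is to deduce the statement from the structure of the indecomposable summands of a $2$-term silting complex over a hereditary algebra, from \cite[Lemma~3.6]{bz} (that is, Lemma~\ref{lem:left-right}(a)), and from Proposition~\ref{prop:basic}(e). First I would record the following structural fact: since $H$ is hereditary, an indecomposable object of $K^b(\proj H)$ concentrated in degrees $-1$ and $0$ is either a stalk projective module in degree $0$, or a complex $Q[1]$ with $Q$ indecomposable projective, or a complex $(Q'\xrightarrow{q}Q'')$ with $Q',Q''\neq 0$ and $q$ monic --- for $\ker q$ is a submodule of the projective $Q'$, hence projective, hence splits off $Q'$, and unless $\ker q=0$ this forces a nontrivial decomposition. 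In particular every indecomposable summand of $\p_M$ has monic differential and is quasi-isomorphic in $\dh$ to its $H^0$, a module; and $\pp\setminus(\pp_l\cup\pp_r)$ is exactly the set of indecomposable projective $H$-modules $P$ with neither $P$ nor $P[1]$ a summand of $\p$, which is the index set in Proposition~\ref{prop:basic}(e).

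For part (a), the membership statements are \cite[Lemma~3.6]{bz} applied to the $H$-modules $P$ and $\nu P$, giving $F(P[1])\in\X(\p)$ and $F(\nu P)\in\Y(\p)$. For the vanishing of $F(\nu P)$ I would use $\Hom_\dh(\p,\nu P)\cong D\Hom_\dh(P,\p)$ together with $\Hom_\dh(P,\p)\cong\Hom_H(P,H^0(\p))$ (valid since $P$ is projective); by Lemma~\ref{lem:summand} this vanishes precisely when $P[1]\in\add\p$, i.e.\ when $P\in\pp_r$. For $F(P[1])$: if $P\in\pp_l$ then $P\in\add\p$, so $P[1]$ is a summand of $\p[1]$ and $F(P[1])$ is a summand of $\Hom_\dh(\p,\p[1])=0$. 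Conversely, as $\pp=\pp_l\sqcup\pp_r\sqcup(\pp\setminus(\pp_l\cup\pp_r))$, if $P\notin\pp_l$ then either $P\in\pp_r$, so $P[1]$ is a nonzero summand of $\p$ and $F(P[1])$ is a nonzero indecomposable projective summand of $B$ (since $\Hom_\dh(\p,-)$ restricts to an equivalence $\add\p\simeq\proj B$), or $P\in\pp\setminus(\pp_l\cup\pp_r)$, so $F(P[1])$ is the nonzero right-hand term of a connecting almost split sequence by Proposition~\ref{prop:basic}(e); in either case $F(P[1])\neq 0$.

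Part (b) is then immediate: for $P\in\pp\setminus(\pp_l\cup\pp_r)$, Proposition~\ref{prop:basic}(e) produces the displayed connecting exact sequence and asserts it is an almost split sequence in $\mod B$, so its end terms $F(\nu P)$ and $F(P[1])$ are indecomposable, with $F(\nu P)$ non-injective and $F(P[1])$ non-projective (an almost split sequence starting at an injective or ending at a projective splits). The dichotomies in (c) and (d) follow from (a) and the structural fact: if $P\in\pp_r$ then $P[1]\in\add\p$, so $F(P[1])$ is an indecomposable projective $B$-module, lying in $\X(\p)$ by (a); conversely, an indecomposable projective $B$-module in $\X(\p)$ is $F(\p')$ for an indecomposable summand $\p'$ of $\p$, and by the structural fact $\p'$ is either $Q[1]$ with $Q\in\pp_r$, or quasi-isomorphic to an $H$-module $N$, in which case $F(\p')\cong\Hom_\dh(\p,N)\in\Y(\p)$ by \cite[Lemma~3.6]{bz} --- impossible for a nonzero module in $\X(\p)$. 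So $\p'\cong P[1]$ with $P\in\pp_r$; part (d) is dual.

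It remains to identify the minimal right (resp.\ left) almost split maps in (c) and (d), which are the degenerate cases of the connecting sequence of Proposition~\ref{prop:basic}(e). Applying $F$ to the triangle $\nu P\to(\nu P/\soc\nu P)\oplus(\rad P)[1]\to P[1]\to(\nu P)[1]$ in $\dh$ underlying that connecting sequence gives an exact sequence of $B$-modules; when $P\in\pp_r$ we have $F(\nu P)=0$ and $F(P[1])$ is the indecomposable projective, so the sequence should display $F(\nu P/\soc\nu P)\oplus F((\rad P)[1])$ as $\rad F(P[1])$ with the map its inclusion, and dually $F(\nu P/\soc\nu P)\oplus F((\rad P)[1])\cong F(\nu P)/\soc F(\nu P)$ when $P\in\pp_l$. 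Verifying that this map really is right (resp.\ left) minimal almost split --- equivalently that $\rad F(P[1])$ (resp.\ $F(\nu P)/\soc F(\nu P)$) is precisely $F$ of the displayed middle term --- is the main obstacle; I would do it either by specializing the argument of \cite[Proposition~5.6]{bz} to these boundary summands of $\p$, or directly from the triangle above, using that $\X(\p)$ is closed under quotients and $\Y(\p)$ under submodules to handle the two summands of the middle term separately.
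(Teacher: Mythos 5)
Your treatment of parts (a), (b), and the dichotomy in (c) and (d) is correct and close to the paper's. For the converse in (c) the paper uses a projective-cover argument (apply $F$ to $P_M\to M$, observe the epimorphism $F(P_M[1])\to F(M[1])$ splits), whereas you invoke the structural decomposition of indecomposable 2-term complexes over a hereditary algebra to restrict the possible shapes of an indecomposable summand $\p'$ with $F(\p')$ projective in $\X(\p)$. Both are valid; your structural fact (monic differential for the $\p_M$ summands, via projectivity of $\im q$ and the resulting splitting of $Q'$ compatibly with $q$) is correct.

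However, there is a genuine gap in the last step, and you explicitly flag it yourself. For the identification of the maps in (c) and (d) as right/left minimal almost split, you reduce to showing that $F(\nu P/S)\oplus F((\rad P)[1])$ is exactly $\rad F(P[1])$ (resp.\ $F(\nu P)/\soc F(\nu P)$), observe the left-exactness gives injectivity, and then write that the remaining verification ``is the main obstacle,'' offering only two unverified suggestions. Neither suggestion is carried out, and the second one (handling the two middle-term summands separately using closure of $\X(\p)$ under quotients and $\Y(\p)$ under submodules) does not obviously yield the radical identification. The paper closes precisely this gap: after applying $F$ to the AR-triangle $\nu P\to(\nu P/S)\oplus(\rad P)[1]\to P[1]\xrightarrow{u}(\nu P)[1]$, one notes that $u$ factors through $F(S[1])$, then computes, for each indecomposable summand $\p'$ of $\p$, the space $\Hom_\dh(\p',S[1])$ (when $\p'=P'[1]$ with $P'\in\pp_r$: one-dimensional iff $P'\cong P$) or $\Hom_\dh(\p',\nu P[1])\cong D\Hom_\dh(P[1],\p')=0$ (when $H^{-1}(\p')=0$), concluding that $\im u$ is supported only at the vertex of $F(P[1])$, hence equals the simple top of $F(P[1])$; this identifies the kernel of $u$ as $\rad F(P[1])$. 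You should supply this (or an equivalent) computation rather than leaving it as an acknowledged obstacle.
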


\begin{proof}
Statement (a) follows from Lemma \ref{lem:left-right} and the definitions of $\pp_l$ and $\pp_r$ and (b) follows from Proposition~\ref{prop:basic}(f). We will prove (c). Statement (d) can be proved similarly. For any $P\in\pp_r$, we have $P[1]\in\add\p$. So $F(P[1])$ is projective. Now we prove that all indecomposable projective modules in $\X(\p)$ are of this from. Let $F(M[1])$ be an indecomposable projective $B$-module in $\X(\p)$ with $M\in\F(\p)$. Applying the functor $F(-)$ to the projective cover $P_M\xrightarrow{p_M} M$ of $M$ in $\mod H$, we obtain an epimorphism $F(P_M[1])\to F(M[1])$ since $F(\ker p_M[2])=0$. Since $F(M[1])$ is projective, this epimorphism is split. Hence $F(M[1])$ is a direct summand of $F(P_M[1])$. Therefore, by (a) and (b), $F(M[1])$ has to have the form $F(P[1])$ for some $P\in\pp_r$.

By \cite[Chap. 4]{h}, there is an AR-triangle
\[\nu P\rightarrow \nu P/S \oplus(\rad P)[1]\rightarrow P[1]\rightarrow (\nu P)[1] \]
in $\dh$, where $S=\soc(\nu P)$.
Applying the functor $F$ to this triangle,
we obtain an exact sequence
 \[0\rightarrow F(\nu P/S)\oplus F(\rad P[1])\rightarrow F(P[1])
 \xrightarrow{u} F((\nu P)[1])\]
 in $\mod B$.
 Note that the last map $u$ in this exact sequence factors through $F(S[1])$.
For each indecomposable
 summand $\p'$ of $\p$, if $\p'$ is of the form $P'[1]$ for some $P'\in\pp_r$, then \sloppy $\Hom_\dh(\p',S[1])$ is 1-dimensional for $P'\cong P$, and 0-dimensional for $P'\ncong P$. If $\p'$ is not of such form, then
 $H^{-1}(\p') = 0$ and hence $\Hom_\dh(\p',\nu P[1])\cong D\Hom_\dh(P[1],\p')=0$. So the image of $u$ is the simple top of $F(P[1])$. Hence $F(\nu P/S)\oplus F(\rad P[1])\rightarrow F(P[1])$ is a right minimal almost split map.

\end{proof}

Let $\pp'_r$ be the subset of $\pp$ consisting of modules from which there are nonzero morphisms to modules in $\pp_r$ which do not factor through modules in $\pp_l$. Dually, let $\pp'_l$ be the subset of $\pp$ consisting of modules to which there are nonzero morphisms from modules in $\pp_l$ which do not factor through modules in $\pp_r$.


\begin{theorem}\label{thm:sec}
Let $H$ be a finite dimensional hereditary algebra and $\p$ be a 2-term silting complex in $K^b(\proj H)$ such that $B=\End_\dh(\p)$ is connected. Then the full subquiver $\Delta$ of $\Gamma_B$ formed by $F(P[1])$ for $P\in\pp'_r$ and by $F(\nu P)$ for $P\in\pp'_l\cup\left(\pp\setminus\pp'_r\right)$, is a faithful double section in a component $\Psi_\p$ of $\Gamma_B$. Moreover, $\Delta$ is a section if and only if $B$ is tilted, while $\Delta$ is a strict double section if and only if $B$ is strictly shod.
\end{theorem}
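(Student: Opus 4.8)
The plan is to assemble the double section $\Delta$ out of the modules that Lemma~\ref{lem:conn} identifies as ``boundary'' objects, and to verify the four defining axioms of a double section directly, transporting structure from the hereditary category $D^b(H)$ via the functors $F(-) = \Hom_\dh(\p,-)$ and $F(-[1])$. The starting point is that $\Gamma_H$ has a well-understood shape, and that the split torsion pair $(\X(\p),\Y(\p))$ of Proposition~\ref{prop:basic}(a) together with the ``closed under successors/predecessors'' property of Proposition~\ref{prop:basic}(b) forces the AR-quiver $\Gamma_B$ to decompose compatibly. The connecting sequences of Proposition~\ref{prop:basic}(e) (equivalently the AR-sequences in Lemma~\ref{lem:conn}(b)) glue the $\Y(\p)$-part and the $\X(\p)$-part together: each indecomposable $P\in\pp\setminus(\pp_l\cup\pp_r)$ contributes a ``bridge'' $F(\nu P)\to F(\nu P/\soc) \oplus F((\rad P)[1]) \to F(P[1])$ with $F(\nu P)$ in $\Y(\p)$ and $F(P[1])$ in $\X(\p)$. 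I would let $\Psi_\p$ be the component of $\Gamma_B$ containing the image (under $F(-)$ and $F(-[1])$) of a chosen component of $\Gamma_H$ containing all the projectives --- using that $H$ is connected, hence $\Gamma_H$ is connected, so all of $\pp$ and all of $\{P[1]\}$ land in one component of $\Gamma_B$ after applying $F$ on each torsion class.

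The key steps, in order:

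\emph{Step 1 (the modules of $\Delta$ are well-defined and indecomposable).} By Lemma~\ref{lem:conn}(a,b), $F(P[1])\in\X(\p)$ and $F(\nu P)\in\Y(\p)$, and both are indecomposable for $P\in\pp\setminus(\pp_l\cup\pp_r)$; for $P\in\pp_r$ one uses $F(P[1])$ projective indecomposable, for $P\in\pp_l$ one uses $F(\nu P)$ injective indecomposable. The definitions of $\pp'_r$ and $\pp'_l$ are chosen precisely so that the nonzero-morphism conditions there correspond, under $F$, to irreducible-map or sectional-path conditions in $\Gamma_B$; one checks $\pp'_r\subseteq\pp\setminus\pp_l$ and $\pp'_l\subseteq\pp\setminus\pp_r$ so the objects listed are the ones from Lemma~\ref{lem:conn}.

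\emph{Step 2 ($\Delta$ is connected, acyclic, convex).} Acyclicity: any oriented cycle in $\Delta$ would give a cycle of nonzero nonisomorphisms among the corresponding $H$-objects, impossible since $H$ is hereditary and the objects are (shifts of) indecomposable modules with no self-extensions in the relevant degrees. Connectedness: follows from connectedness of $B$ (hence of $\Psi_\p$) together with the fact that every indecomposable of $\Psi_\p$ is a predecessor of some $F(P[1])$ or a successor of some $F(\nu P)$, which in turn reduces to the corresponding statement in $\Gamma_H$ about predecessors of projectives / successors of injectives. Convexity: a path in $\Gamma_B$ between two modules of $\Delta$ stays in $\Delta$ because it translates, via the silting theorem and Proposition~\ref{prop:basic}(b), into a path in $D^b(H)$ between two objects of the form $\nu(\text{proj})$ or $\text{proj}[1]$, and the convex hull of these in $D^b(H)$ is controlled by the hereditary structure --- here one uses that for $Y,Y'\in\Y(\p)$ a path from $Y$ to $Y'$ through $\X(\p)$ is impossible (no maps $\X\to\Y$), and symmetrically.

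\emph{Step 3 ($\tau_B$-orbit counting).} This is the crux. For a $\tau_B$-orbit $\oo$ in $\Psi_\p$ one must show $1\le |\Delta\cap\oo|\le 2$, with $|\Delta\cap\oo|=2$ exactly when $\oo=\{X,\tau_B X\}$ sits on a ``connecting'' bridge. Using the split torsion pair and Proposition~\ref{prop:basic}(e), an orbit either lies entirely in $\X(\p)$, entirely in $\Y(\p)$, or is the two-element orbit $\{F(\nu P), F(P[1])\}$ of a bridge sequence with $\tau_B F(P[1]) = F(\nu P)$ for $P\in\pp\setminus(\pp_l\cup\pp_r)$. For an orbit inside $\Y(\p)$: its preimage under $F(-)$ (which is fully faithful on $\T(\p)$) is a $\tau_H$-orbit (up to the usual boundary corrections) in $\mod H$, and each $\tau_H$-orbit of $\mod H$ meets $\{\text{injectives}\}$ in exactly one point; this gives $|\Delta\cap\oo|=1$, realized by the unique $F(\nu P)$ with $P\in\pp'_l\cup(\pp\setminus\pp'_r)$ lying on it. Symmetrically for orbits inside $\X(\p)$ via projectives of $H$ and the functor $F(-[1])$. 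For the bridge orbits, $|\Delta\cap\oo|=2$ since both $F(\nu P)$ and $F(P[1])$ were put into $\Delta$, and Lemma~\ref{lem:conn}(b,c,d) supplies exactly the required sectional paths $I\to\cdots\to\tau_B X$ (coming from $\Gamma_H$-sectional paths of injectives down to $\nu P$) and $X\to\cdots\to P_{\mathrm{proj}}$ (from projectives), after applying $F$. I expect this orbit bookkeeping --- particularly making the ``up to boundary corrections'' precise and showing no $\tau_B$-orbit is hit three or more times --- to be the main obstacle; it requires carefully tracking how $\tau_B$ acts across the torsion pair and matching it with $\tau_H$ on each side.

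\emph{Step 4 (faithfulness).} The direct sum of the modules in $\Delta$ contains, among its summands, $F(\nu P)$ for all $P\in\pp_l$ (these are all the indecomposable injective $B$-modules in $\Y(\p)$, by Lemma~\ref{lem:conn}(d)) and $F(P[1])$ for all $P\in\pp_r$ (all indecomposable projective $B$-modules in $\X(\p)$, by Lemma~\ref{lem:conn}(c)); together with the connecting-bridge modules one checks the annihilator of $\bigoplus_{M\in\Delta} M$ is zero. Concretely, $\Hom_\dh(\p,\nu\p)\cong D\End_\dh(\p)$ is an injective cogenerator for $\mod B$ (noted after the silting theorem), and its indecomposable summands are the $F(\nu P)$, $P\in\pp$; each such summand is either in $\Delta$ or, when $P\notin\pp_l$, is a successor of a $\Delta$-module $F(\nu P)$ lying on its orbit --- hence $\bigoplus\Delta$ is faithful because a faithful injective cogenerator's orbit-representatives are faithful.

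\emph{Step 5 (section vs.\ strict double section).} Finally, $\Delta$ is a section iff no bridge orbit occurs iff $\pp = \pp_l\cup\pp_r$ iff $H^{-1}(\p)=H^0(\nu\p)$ has ``full rank'', which by Lemma~\ref{lem:pd2} and the argument of Lemma~\ref{lem:tilted} is equivalent to $B$ being tilted; and $\Delta$ is strict iff at least one bridge orbit occurs iff $\pp\setminus(\pp_l\cup\pp_r)\ne\emptyset$, which by Theorem~\ref{thm:1-silt} (a silted, non-tilted algebra is strictly shod) is equivalent to $B$ being strictly shod. This last equivalence is where we cash in part (a) of the main theorem, already proved.
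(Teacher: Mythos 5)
Your high-level plan (verify the double-section axioms for the set of modules $\Delta$ using the torsion pair structure and the connecting sequences) is the right shape, but several steps as sketched do not actually close, and Step 5 is wrong on a definitional point.

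The most concrete problem is in Step 5. You characterize ``$\Delta$ is a strict double section'' by $\pp\ne\pp_l\cup\pp_r$. That is not what $\Delta$ being strict means: a $\tau_B$-orbit is hit twice by $\Delta$ precisely when both $F(\nu P)$ and $F(P[1])$ belong to $\Delta$ for the same $P$, which by the very definition of $\Delta$ happens exactly when $P\in\pp'_l\cap\pp'_r$. Having $P\in\pp\setminus(\pp_l\cup\pp_r)$ only guarantees that both $F(\nu P)$ and $F(P[1])$ are nonzero, not that both lie in $\Delta$; in general only one of them does. (The paper records the correct criterion in Corollary~\ref{cor:strict}: strictness is equivalent to the existence of $P_1\in\pp_l$, $P_2\in\pp\setminus(\pp_l\cup\pp_r)$, $P_3\in\pp_r$ and nonzero $f\colon P_1\to P_2$, $g\colon P_2\to P_3$ with $f$ not factoring through $\pp_r$ and $g$ not factoring through $\pp_l$ --- i.e., $\pp'_l\cap\pp'_r\ne\emptyset$.) Relatedly, you call the two points $\{F(\nu P),F(P[1])\}$ a ``two-element orbit''; the definition of double section only requires the \emph{intersection} $\Delta\cap\oo$ to consist of two consecutive points, the full $\tau_B$-orbit will typically be much larger. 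The paper derives section/strict via the subquivers $\Delta_l,\Delta_r$ and the Hom-vanishing $\Hom_B(\Delta_r,\tau_B\Delta_l)=0$, feeding this into [RS, Thm.~8.2], [L, Thm.~1.6] and [S, Thm.~3].

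Step 3 also has a genuine gap. You reduce the claim that every $\tau_B$-orbit in $\Psi_\p$ meets $\Delta$ to the statement that ``each $\tau_H$-orbit of $\mod H$ meets the injectives in exactly one point.'' That fails for any hereditary algebra of infinite representation type: regular $\tau_H$-orbits meet no injective and no projective. Moreover, $F$ on $\T(\p)$ does not send AR-sequences in $\mod H$ to AR-sequences in $\mod B$ in a way that identifies $\tau_H$-orbits with $\tau_B$-orbits, so the reduction to $\Gamma_H$ cannot be made literal. The paper instead runs an induction inside $\Gamma_B$ itself, along arrows between the given orbit and a $\Delta'$-orbit, mimicking the argument in the tilting case (cf.\ [ASS, Thm.~VIII.3.5]), combined with the fact that any AR-sequence which crosses from $\Y(\p)$ to $\X(\p)$ must be a connecting sequence (Proposition~\ref{prop:basic}(e)).

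Step 4 (faithfulness) as written is not a proof. The assertion that ``a faithful injective cogenerator's orbit-representatives are faithful'' is not a theorem, and the modules in $\Delta$ are not orbit-representatives of the summands of $D(B)$. The paper establishes faithfulness and connectedness by explicitly exhibiting a tilting $B$-module $T=\bigl(\bigoplus_{P\in\pp_r}F(P[1])\bigr)\oplus\bigl(\bigoplus_{P\in\pp\setminus\pp_r}F(\nu P)\bigr)$ (tilting modules are faithful and their convex hull $\Delta_T$ is connected and faithful), and then trimming $\Delta_T$ down to $\Delta$ one source at a time while preserving faithfulness and connectedness. Step 2's acyclicity sketch (``cycles would give cycles of nonzero maps in $\Gamma_H$'') is also too loose: irreducible maps in $\Gamma_B$ do not transport directly to $\Gamma_H$; the paper gets acyclicity by embedding $\Delta$ in the larger full subquiver $\Delta'$ and observing that $\Delta'$ is already acyclic and convex by Lemma~\ref{lem:conn}.
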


\begin{proof}
We first note that, since $A$ is hereditary,
then for a projective $P$ with
$P/\rad P \cong S$, we
have that $\rad P$ is projective and $\nu P/S$ is injective.
By Lemma~\ref{lem:conn}, $\pp'_r$ is the set of modules $P\in\pp$ such that there is a path in $\Gamma_B$ from $F(P[1])$ to $F(P'[1])$ for some $P'\in\pp_r$, and $\pp'_l$ is the set of $P\in\pp$ such that there is a path from $F(\nu P')$ to $F(\nu P)$ for some $P'\in\pp_l$.

Let $Z_1\rightarrow Z_2\rightarrow\cdots\rightarrow Z_s$ be a path in $\Gamma_B$ with $Z_1,Z_s\in\Delta$. To prove that $\Delta$ is convex, it is clearly sufficient to prove that $Z_2$ is in
$\Delta$, and proceed by induction.


By assumption, the module $Z_1$ is either of the form $F(P[1])$ 
for $P \in \pp'_r$, or
of the form $F(\nu P)$ for $P\in\pp_l' \cup (\pp \setminus \pp_r')$.

First assume that $Z_1$ is of the form $F(P[1])$. Since $F(P[1])$ belongs to $\X(\p)$, by Proposition~\ref{prop:basic}(b) we
have that none of the $Z_2, \dots Z_s$ are in $\Y(\p)$, and hence none
are of the form $F(\nu Q)$ for a projective $Q$. In particular
$Z_s =F(P'[1])$ for some projective $P'$ in $\pp'_r$, and
using repeatedly Lemma~\ref{lem:conn}(b,c) also $Z_{s-1}, Z_{s-2},
\dots, Z_2$ must have this
property. Hence, the claim that $Z_2$ is in $\Delta$ holds in this case.

Now assume $Z_1$ is of the form $F(\nu P)$ for some
$P$ in $\pp'_l\cup\left(\pp\setminus\pp'_r\right)$.
By Lemma~\ref{lem:conn}(b, d) we then have that either
$Z_2 = F(Q[1])$ or $Z_2 = F(\nu Q)$ for a projective $Q$.
In case $Z_2 = F(Q[1])$, we can use the argument for case I, to conclude
that $Q$ is in $\pp'_r$ and hence $Z_2$ is in $\Delta$. Therefore
assume $Z_2 = F(\nu Q)$. Then by Lemma~\ref{lem:conn}(b,d), the map $Z_1\to Z_2$ is induced by an irreducible map $P\to Q$. If $P\in\pp'_l$, then $Q\in\pp'_l$ and we are done. If $P\notin\pp'_l$, then
since $P$ is by assumption not in $\pp'_r$, we must also have
that $Q$ is not in $\pp'_r$. This finishes the proof for the claim that $Z_2$ is in $\Delta$ for case II. Hence, we have that $\Delta$ is convex.


We next prove that $\Delta$ is acyclic.
Let $\Delta'$ be the full subquiver of $\Gamma_B$ formed by $F(P[1])$ for $P\in\pp\setminus\pp_l$ and by $F(\nu P)$ for $P\in\pp\setminus\pp_r$. It
follows from Lemma~\ref{lem:conn} that $\Delta'$ is convex and acyclic. It is clear that $\Delta$ is a full subquiver of $\Delta'$. So $\Delta$ is also acyclic.

We proceed to show that $\Delta$ is faithful and connected. For this,
consider the $B$-modules $T_a =\oplus_{P\in\pp_r}F(P[1])\in\X(\p)$ and $T_b = \oplus_{P\in\pp\setminus\pp_r}F(\nu P)\in\Y(\p)$. We claim that $T = T_a \oplus T_b$ is a tilting module. Indeed, by Proposition~\ref{prop:basic}(d) we have $\pd T_b \leq 1$, and by
Lemma~\ref{lem:conn}(c) it follows
that $T_a$ is projective. So we have that $\Ext^1(T_b,T_a)
\cong D\Hom(T_a, \tau_B T_b)$.
Since by Proposition~\ref{prop:basic}(b) we have that
$\Y(\p)$ is closed under predecessors, we must
have that $\tau_B T_b$ is also in
$\Y(\p)$. Hence, since $T_a$ is in $\X(\p)$,
we have that $\Ext^1(T_b,T_a)\cong D\Hom_B(T_a, \tau_B T_b) = 0$.
The $B$-module $F(\nu \p)$ is $\Ext$-injective
in $\Y(\p)$ by \cite[Proposition 2.8(3)]{bz}. Hence, we 
have $\Ext^1_B(T_b,T_b) = 0$. Hence $\Ext^1_B(T,T) = 0$.
Since clearly $|T|=|A|=|B|$, we have that $T$ is a tilting $B$-module.

Now, let $\Delta_T$ be the smallest convex full subquiver of $\Gamma_B$ which contains all indecomposable summands of $T$.
Since $T$ is a tilting module, we have that $\Delta_T$ is connected and faithful. It is easy to check that $\Delta_T$ is the full subquiver of $\Delta'$ formed by $F(\nu P)$ for $P\in\pp\setminus\pp_r$ and by $F(P[1])$ for $P\in\pp'_r$. So $\Delta$ is a full subquiver of $\Delta_T$ and $\Delta_T\setminus\Delta$ is contained in $\Y(\p)\cap\Delta'$.



We will construct recursively a sequence of faithful connected full subquivers $\Delta_0=\Delta_T$, $\Delta_1$, $\cdots$, $\Delta_m=\Delta$ of $\Delta'$ such that all of them contain $\Delta$ as a full subquiver and $\Delta_{s+1}$ is a full subquiver of $\Delta_{s}$ with one less vertex for each $0\leq s\leq m-1$. Assume that $\Delta_s$ has been constructed for some $s$. By assumption we have $\Delta\subset\Delta_{s}\subset\Delta_T$ and so $\Delta_s\setminus\Delta\subset\Y(\p)\cap\Delta'$. Then for each vertex $Z=F(\nu P)$ in $\Delta_{s} \setminus \Delta$, there is no path from $F(\nu P')$ to $Z$ for any $P'\in\pp_l$, but there is a path from $\tau_B^{-1}Z = F(P[1])$ to $F(P''[1])$ for some $P''\in\pp_r$. So $\tau_B^{-1}Z\in\Delta_s$ and one can choose a vertex $Z\in\Delta_{s} \setminus \Delta$ which is a source in $\Delta_s$. Now let $\Delta_{s+1}$ be the full subquiver of $\Delta'$ obtained from $\Delta_s$ by removing $Z$ and the arrows adjacent to  $Z$. By Lemma~\ref{lem:conn}(b), we have that $\Delta_{s+1}$ is also faithful and connected. This finishes the construction and the
proof that $\Delta$ is faithful and connected.

Now let $\Psi_\p$ be the connected component of $\Gamma_B$ which contains $\Delta$.
By the construction of $\Delta$,
if a $\tau_B$-orbit $\oo$ in $\Psi_\p$ intersects $\Delta$,
then $|\oo\cap\Delta|\leq 2$.
When it equals $2$,
the last condition in the definition of double section holds.
So what we need to prove is that $\Delta$ intersects each $\tau_B$-orbit in $\Psi_\p$.
By definition, for each $P\in\pp$,
at least one of $F(\nu P)$ and $F(P[1])$ is in $\Delta$.
Hence what we need to prove is equivalent to that $\Delta'$ intersects each $\tau_B$-orbit in $\Psi_\p$.
This proof is similar to the proof for the tilting case (cf. e.g. the proof of \cite[Theorem~VIII.3.5]{ass}), but we provide details for completeness.
By induction, we only need to prove that for any $\tau_B$-orbit $\oo\subset\Psi_\p$, if there is an arrow $\tau^nY\to Z$ or $Z\to\tau^nY$ in $\Psi_\p$ for some $n\in\mathbb{Z}$, a module $Z\in\oo$ and a module $Y\in\Delta'$, then $\oo$ intersects $\Delta'$. We assume that $|n|$ is minimal, and consider the following three cases.

\begin{itemize}
  \item[-] The case $n<0$. We first claim that
  $Y=F(P[1])$ for some $P\in\pp$, since we otherwise can replace $Y$ by $\tau^{-1} Y$, and then this contradicts the minimality of $|n|$. We next claim that it follows that $Z\in\Delta'$ and then we are done. To prove this claim, assume first
  that $Z$ is in $\X(\p)$ but not in $\Delta'$. Then, by Lemma~\ref{lem:conn}(c), it is not projective. So $\tau Z\neq 0$ and there exists an arrow $\tau Z\to\tau^{n+1}Y $ or $\tau^{n+1}Y\to \tau Z$. This contradicts the minimality of $|n|$. Now assume $Z$ is in $\Y(\p)$ but not in $\Delta'$. Then, by Lemma~\ref{lem:conn}(d), it is not injective and then $\tau^{-1}Z\neq 0$. There is no arrow from $\tau^n Y$ to $Z$ since $\tau^n Y\in\X(\p)$. If there is an arrow  $Z\to \tau^n Y$, then $\tau^{-1}Z\in \X(\p)$ since it is a successor of $\tau^n Y$. Since $Z\in\Y(\p)$, the AR-sequence starting at $Z$ is a connecting sequence, which implies that $Z\in\Delta'$. This is a contradiction.
  \item[-] The case $n>0$. This is dual to the above case.
  \item[-] The case $n=0$. If there exists an arrow $Y\to Z$ with $Y=F(P[1])$ for some $P\in\pp$, then $Z\in\X(\p)$. If $Z$ is projective, then it is in $\Delta'$; if $Z$ is not projective, then $\tau Z\neq 0$ and there is an arrow $\tau Z\to Y$, which implies $\tau Z\in\Delta'$ by Lemma~\ref{lem:conn}. If there exists an arrow $Y\to Z$ with $Y=F(\nu P)$ for some $P\in\pp$, the claim follows directly
  from Lemma~\ref{lem:conn}. Similarly, for the case $Z\to Y$, we also have that $\oo$ intersects $\Delta'$.
\end{itemize}
Therefore, we have proved that $\Delta$ is a faithful double section.

To proceed, consider the following full subquivers of $\Delta'$
\[\Delta'_l=\{x\in\Delta\mid \text{there is an almost sectional path $x\to\cdots\to F(P[1])$ for some $P\in\pp_r$}\},\]
\[\Delta'_r=\{y\in\Delta\mid \text{there is an almost sectional path $F(\nu P)\to\cdots\to y$ for some $P\in\pp_l$}\},\]
\[\Delta_l=\left(\Delta\setminus\Delta'_r\right)\cup\tau_B\Delta'_r,\]
\[\Delta_r=\left(\Delta\setminus\Delta'_l\right)\cup\tau_B^{-1}\Delta'_l.\]
We claim that for any modules $X$ from $\Delta_r$ and $Y$ from $\Delta_l$, we have $\Hom_B(X,\tau_B Y)=0$. By definition, we have $\{F(P[1])\mid P\in\pp'_l\cap\pp'_r\}\subset\Delta'_r$
and $\{F(\nu P)\mid P\in\pp'_l\cap\pp'_r\}\subset\Delta'_l$.
So \[\begin{array}{ccccc}
\Delta_l\cap\X(\p)&=&\left(\Delta\setminus\Delta'_r\right)\cap\X(\p)&\subset&\{F(P[1])\mid P\in\pp'_r\setminus\left(\pp'_l\cap\pp'_r\right)\},\\
\Delta_l\cap\Y(\p)&\subset&\Delta'\cap\Y(\p)&=&\{F(\nu P)\mid P\in\pp\setminus\pp_r \},\\
\Delta_r\cap\Y(\p)&=&\left(\Delta\setminus\Delta'_l\right)\cap\Y(\p)&\subset&\{F(\nu P)\mid P\in\pp\setminus\pp'_r\}.
\end{array}\]
Hence by Lemma~\ref{lem:conn}, we have $\tau_B\left(\Delta_l\cap\X(\p)\right)\subset\{F(\nu P)\mid P\in\pp'_r\setminus\left(\pp'_l\cap\pp'_r\right)\}$. First, recall that we have proved that $T_b=\oplus_{P\in\pp}F(\nu P)$ is a direct summand of a tilting $B$-module. Then $\Hom_B(T_b,\tau_BT_b)=0$. It follows that
\begin{equation}\label{eq:31}
\Hom_B(\Delta_r\cap\Y(\p),\tau_B\left(\Delta_l\cap\Y(\p)\right))=0.
\end{equation}
Second, for any map $f$ from $P_1\in\pp\setminus\pp'_r$ to $P_2\in\pp'_r\setminus\left(\pp'_l\cap\pp'_r\right)$, since $P_2\in\pp'_r$ but $P_1\notin\pp'_r$, we have that $f$ factors through $\pp_l$ as $f_1f_2$. Furthermore, since $P_2\notin\pp'_l$, the map $f_1$ factors through $\pp_r$. Hence $F(\nu f)=0$. By the silting theorem, it is easy to check that $F(\nu-)$ induces an epimorphism $\Hom_A(P_1,P_2)\to\Hom_B(F(\nu P_1),F(\nu P_2))$. Therefore, we have that
\begin{equation}\label{eq:32}
\Hom_B\left(\Delta_r\cap\Y(\p),\tau_B\left(\Delta_l\cap\X(\p)\right)\right)=0.
\end{equation}
Third, note that $\tau_B\Delta_l\subset\Y(\p)$, so we have that
\begin{equation}\label{eq:33}
\Hom_B\left(\Delta_r\cap\X(\p),\tau_B\Delta_l\right)=0.
\end{equation}
Combining the equations \eqref{eq:31}, \eqref{eq:32} and \eqref{eq:33}, we complete the proof of the claim.

It now follows that
if $\Delta$ is a double section,
then $B$ is a strictly shod algebra by \cite[Theorem~8.2]{rs}.
On the other hand, if $\Delta$ is a section, which implies that $\Delta_l=\Delta_r=\Delta$, then by \cite[Theorem~1.6]{l} and \cite[Theorem~3]{sko}, it follows that $B$ is a tilted algebra. By Theorem~\ref{thm:1-silt},
the algebra $B$ is either tilted or strictly shod, therefore we have the last assertion.
\end{proof}


\begin{remark}
In the above proof, a tilting module is constructed for each functorially finite torsion pair $(\T,\F)$ satisfying condition (d) in Proposition~\ref{prop:CL} in a silted algebra, which is the one considered in \cite{cl} when $(\T,\F)=(\add(\R\setminus\L),\add\L)$.
\end{remark}

\begin{remark}
In general, $B$ is not necessarily connected even if $H$ is connected. In this case, the subquiver $\Delta$ constructed in the above theorem is a union of faithful double sections $\Delta_i$ in components $\Psi_\p^{i}$ of $\Gamma_{B_i}$, where each $B_i$ is a connected component of $B$.
\end{remark}

\begin{corollary}\label{cor:strict}
Let $H$ be a finite dimensional hereditary algebra and $\p$ be a 2-term silting complex in $K^b(\proj H)$ such that $B=\End_\dh(\p)$ is connected. Then $B$ is strictly shod if and only if there are nonzero morphisms $f \colon P_1\to P_2$ and $g \colon P_2\to P_3$ with $P_1\in\pp_l$, with  $P_2\in\pp\setminus(\pp_l\cap\pp_r)$ and with $P_3\in\pp_r$ such that $f$ does not factor through $\pp_r$ and $g$ does not factor through $\pp_l$.
\end{corollary}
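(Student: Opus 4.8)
The plan is to read the statement off Theorem~\ref{thm:sec}. By that theorem, $B$ is strictly shod exactly when the faithful double section $\Delta$ of $\Psi_\p$ is strict, i.e.\ exactly when some $\tau_B$-orbit $\oo$ in $\Psi_\p$ has $|\Delta\cap\oo|=2$. So the task reduces to showing that such an orbit exists if and only if $\pp'_l\cap\pp'_r\neq\emptyset$, and then unwinding the definitions of $\pp'_l$ and $\pp'_r$.

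Assume first $\pp'_l\cap\pp'_r\neq\emptyset$ and pick $P$ in it. The factorisation conditions built into the definitions of $\pp'_l,\pp'_r$ force $P\notin\pp_l\cup\pp_r$ (if $P\in\pp_l$, any morphism out of $P$ --- in particular one witnessing $P\in\pp'_r$ --- would factor through $\pp_l$; dually for $\pp_r$). Hence Lemma~\ref{lem:conn}(b) gives the connecting almost split sequence $0\to F(\nu P)\to F(\nu P/\soc(\nu P))\oplus F((\rad P)[1])\to F(P[1])\to 0$, so $F(\nu P)$ and $F(P[1])$ are distinct nonzero indecomposables --- one in $\Y(\p)$, one in $\X(\p)$ --- lying in one $\tau_B$-orbit $\oo$. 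By the construction of $\Delta$ in Theorem~\ref{thm:sec}, $F(P[1])\in\Delta$ since $P\in\pp'_r$, and $F(\nu P)\in\Delta$ since $P\in\pp'_l$; thus $|\Delta\cap\oo|\ge 2$, and as $\Delta$ is a double section $|\Delta\cap\oo|=2$.

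For the converse, suppose $|\Delta\cap\oo|=2$. I would use that $\Delta$ is a full subquiver of the larger subquiver $\Delta'$ from the proof of Theorem~\ref{thm:sec} (a direct comparison of the indexing sets gives $\Delta\subseteq\Delta'$), and that $\Delta'$ meets every $\tau_B$-orbit of $\Psi_\p$ in at least one and at most two points, with $\Delta'\cap\oo\subseteq\{F(\nu P),F(P[1])\}$ for the indecomposable projective $P$ attached to that orbit. This last point comes from the fact (established in the proof of Theorem~\ref{thm:sec}) that $\Delta'$ meets every orbit, together with Lemma~\ref{lem:conn}(b)--(d) and closure of $\X(\p)$ under successors and $\Y(\p)$ under predecessors, which rule out two vertices of the form $F(\,\cdot\,[1])$, or two of the form $F(\nu\,\cdot\,)$, in one orbit. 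Hence $\Delta\cap\oo=\Delta'\cap\oo=\{F(\nu P),F(P[1])\}$ for a single $P\notin\pp_l\cup\pp_r$; then $F(P[1])\in\Delta$ gives $P\in\pp'_r$, and $F(\nu P)\in\Delta$ together with $P\in\pp'_r$ forces $P\in\pp'_l$ (the $F(\nu\,\cdot\,)$-vertices of $\Delta$ being indexed by $\pp'_l\cup(\pp\setminus\pp'_r)$). So $\pp'_l\cap\pp'_r\neq\emptyset$.

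It remains to unwind: $P_2\in\pp'_l$ means there is a nonzero $f\colon P_1\to P_2$ with $P_1\in\pp_l$ not factoring through $\pp_r$, and $P_2\in\pp'_r$ means there is a nonzero $g\colon P_2\to P_3$ with $P_3\in\pp_r$ not factoring through $\pp_l$; conversely such $f,g$ exhibit $P_2\in\pp'_l\cap\pp'_r$. As observed, these factorisation conditions already force $P_2\notin\pp_l\cup\pp_r$, in particular $P_2\in\pp\setminus(\pp_l\cap\pp_r)$, which is exactly the phrasing in the statement. \textbf{The main obstacle} is the converse direction above: rigorously pinning down, from the Auslander--Reiten structure of $\Psi_\p$ and Lemma~\ref{lem:conn}, that a two-element intersection $\Delta\cap\oo$ is necessarily the connecting pair $\{F(\nu P),F(P[1])\}$ of a single projective, not a mismatched pair $\{F(\nu Q),F(P[1])\}$ with $Q\neq P$. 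An alternative is to work instead with the subquivers $\Delta'_l,\Delta'_r,\Delta_l,\Delta_r$ introduced in the proof of Theorem~\ref{thm:sec}, using the inclusion $\{F(\nu P)\mid P\in\pp'_l\cap\pp'_r\}\subseteq\Delta'_l$ proved there, at the cost of an analysis of almost sectional paths.
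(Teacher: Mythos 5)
Your proposal follows the same chain of equivalences as the paper: $B$ strictly shod $\iff$ $\Delta$ is a strict double section (Theorem~\ref{thm:sec}) $\iff$ some $\tau_B$-orbit meets $\Delta$ twice $\iff$ $\pp'_l\cap\pp'_r\neq\emptyset$ $\iff$ the factorization condition. The paper's own proof compresses the middle equivalence into the phrase ``by the construction of $\Delta$'', and you are right to notice that this step deserves more care: one must know that a two-element intersection $\Delta\cap\oo$ is necessarily of the form $\{F(\nu P),F(P[1])\}$ for a \emph{single} $P\notin\pp_l\cup\pp_r$.

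The place where your write-up is not yet a proof is exactly the spot you flag. Your proposed route through $\Delta'$ rests on the unproved claim that no $\tau_B$-orbit contains two vertices of $\Delta'$ of the same type (two of the form $F(\,\cdot\,[1])$ or two of the form $F(\nu\,\cdot\,)$), and as stated this would have to rule out such pairs at arbitrary $\tau_B$-distance, which does not follow directly from Lemma~\ref{lem:conn}(b)--(d). Fortunately, the hypothesis you actually have is stronger than ``$|\Delta\cap\oo|=2$'': Theorem~\ref{thm:sec} already tells you $\Delta$ is a double section, so by definition $\Delta\cap\oo=\{X,\tau_B X\}$, i.e.\ the two vertices are $\tau_B$-\emph{consecutive}. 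This reduces the question to a short case analysis using Lemma~\ref{lem:conn}(a,b,d) and Proposition~\ref{prop:basic}(b): if $X\in\X(\p)$ then $X=F(P[1])$ with $P\notin\pp_l$ (nonzero) and $P\notin\pp_r$ (not projective, since $\tau_B X$ exists), hence $\tau_B X=F(\nu P)\in\Y(\p)$, forcing $P\in\pp'_l\cap\pp'_r$; the case $X,\tau_B X$ both in $\X(\p)$ is impossible because $\tau_B F(P[1])=F(\nu P)\in\Y(\p)$; and the case $X=F(\nu P),\ \tau_B X=F(\nu Q)$ both in $\Y(\p)$ is impossible because $\tau_B^{-1}F(\nu Q)$ is either undefined ($Q\in\pp_l$, where $F(\nu Q)$ is injective), or zero ($Q\in\pp_r$), or $F(Q[1])\in\X(\p)$, none of which can equal $F(\nu P)\in\Y(\p)$. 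This closes the gap without invoking $\Delta'$. The rest of your argument (the forward direction $\pp'_l\cap\pp'_r\neq\emptyset\Rightarrow$ strict, the observation that $\pp'_l\cap\pp'_r\subseteq\pp\setminus(\pp_l\cup\pp_r)$, and the unwinding of $\pp'_l,\pp'_r$ into the factorization conditions) is correct and matches the paper.
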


\begin{proof}
By Theorem~\ref{thm:sec}, the algebra $B$ is strictly shod if and only if $\Delta$ is a strict double section, that is, there is a $P_2\in\pp\setminus\left(\pp_l\cup\pp_r\right)$ such that both $F(\nu P)$ and $F(P[1])$ belong to $\Delta$. By the construction of $\Delta$, this is equivalent to that $P_2\in\pp'_l\cap\pp'_r$. Then by definition, this is equivalent to that there is a morphism from a module $P_1\in\pp_l$ to $P_2$, which does not factor through $\pp_r$ and there is a morphism from $P_2$ to $P_3\in\pp_r$, which does not factor through $\pp_l.$ Thus, the proof is complete.
\end{proof}

\begin{corollary}\label{cor:tilted}
Let $H$ be a finite dimensional hereditary algebra and $\p$ be a 2-term tilting complex. Then $\End_\dh (\p)$ is a tilted algebra.
\end{corollary}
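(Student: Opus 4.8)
The plan is to observe that $B=\End_\dh(\p)$ is a silted algebra, hence by Theorem~\ref{thm:1-silt} it is either tilted or strictly shod, and then to rule out the strictly shod case by exploiting the extra vanishing enjoyed by a \emph{tilting} (as opposed to merely silting) complex, namely $\Hom_\dh(\p,\p[i])=0$ for all $i\neq 0$, not only for $i>0$. First I would reduce to the case that $B$ is connected: writing $B=\prod_i B_i$ as a product of connected algebras groups the indecomposable summands of $\p$, and the corresponding direct summand $\p_i$ of $\p$ is a 2-term tilting complex over a connected block $H_i$ of $H$ with $B_i\cong\End_{D^b(H_i)}(\p_i)$; since $B$ is tilted precisely when each $B_i$ is, we may assume $B$ is connected and $H\cong\k Q$ for a connected finite acyclic quiver $Q$. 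Then Theorem~\ref{thm:1-silt} and Corollary~\ref{cor:strict} apply, and it suffices to exclude the strictly shod case.

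So suppose $B$ is strictly shod. By Corollary~\ref{cor:strict} there exist nonzero morphisms $f\colon P_1\to P_2$ and $g\colon P_2\to P_3$ between indecomposable projective $H$-modules with $P_1\in\pp_l$ and $P_3\in\pp_r$. The decisive claim is that $gf\neq 0$. Identifying $\Hom_H(e_a\k Q,e_b\k Q)\cong e_b\k Q e_a$, composition of morphisms between indecomposable projectives becomes multiplication in $\k Q$; and since $Q$ has no oriented cycles, every path through a fixed vertex $b$ admits a unique factorization as a path into $b$ followed by a path out of $b$. Hence the product of two nonzero elements of $e_c\k Q e_b$ and $e_b\k Q e_a$ is a nonzero linear combination of pairwise distinct paths, so it is nonzero; therefore $gf\neq 0$ and $\Hom_H(P_1,P_3)\neq 0$.

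To conclude I would argue as follows. The condition $P_1\in\pp_l$ says that $P_1$, regarded as a stalk complex in degree $0$, is a direct summand of $\p$, and $P_3\in\pp_r$ says that $P_3[1]$ is a direct summand of $\p$, whence $P_3=P_3[1][-1]$ is a direct summand of $\p[-1]$. Consequently $\Hom_\dh(P_1,P_3)=\Hom_H(P_1,P_3)\neq 0$ occurs as a direct summand of $\Hom_\dh(\p,\p[-1])$, so the latter is nonzero — contradicting that $\p$ is a tilting complex. Hence $B$ is not strictly shod, so it is tilted.

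The step I expect to be most delicate is the claim $gf\neq 0$, i.e.\ that a composite of nonzero morphisms between indecomposable projective modules over a finite dimensional hereditary algebra is again nonzero; this is exactly where the acyclicity of $Q$ is indispensable, the analogous assertion failing over algebras whose quiver has oriented cycles. The reduction to the connected case, needed so that Theorem~\ref{thm:1-silt} and Corollary~\ref{cor:strict} can be invoked as stated, is routine.
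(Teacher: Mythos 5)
Your proof is correct and follows exactly the route the paper has in mind: invoke Theorem~\ref{thm:1-silt} and Corollary~\ref{cor:strict}, then derive a contradiction from the vanishing $\Hom_H(\pp_l,\pp_r)=0$ enjoyed by a tilting complex. The one step the paper leaves tacit and that you rightly single out as the delicate point is that $gf\neq0$; your path-counting argument in $\k Q$ is sound, but there is a cleaner, Morita-free way to see it: over a hereditary algebra any nonzero morphism $f\colon P\to Q$ between indecomposable projectives is a monomorphism (the image $\im f$ is a nonzero submodule of a projective, hence projective, so the epimorphism $P\twoheadrightarrow\im f$ splits, and indecomposability of $P$ forces $\ker f=0$), whence $gf$ is a monomorphism and in particular nonzero; this avoids the reduction to a connected path algebra for that step, though you still need connectedness of $B$ in order to apply Theorem~\ref{thm:1-silt} and Corollary~\ref{cor:strict} as stated, and your reduction there is fine.
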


\begin{proof}
This follows from Theorem~\ref{thm:1-silt} and Corollary~\ref{cor:strict}, using the fact that $\Hom_H(\pp_l,\pp_r)=0$ when $\p$ is tilting.
\end{proof}


%

\section{Abelian hereditary categories}\label{sec:abel}

In this section, we define and study 2-term silting complexes in
bounded derived categories of abelian categories.
Let $\A$ be an abelian $\k$-category.
Assume that $A$ is Ext-finite, i.e., for any objects $M,N\in\A$, we have that $\dim_\k\Ext^i_\A(M,N)$ is finite for all $i\geq0$.
Then $\DAA$ is Krull-Schmidt and Hom-finite (cf. \cite[Section~I.4]{hrs}).
In \cite{hkm}, the authors also study 2-term silting complexes in bounded derived categories of abelian categories.
We remark that the difference between our setting and \cite{hkm} is that we assume that $\A$ is Ext-finite while they assume that $\A$ admits arbitrary coproducts.

\begin{definition}\label{def:silting}
A complex $\p$ in $\DAA$ is called a \emph{2-term silting complex} if the following hold:
\begin{itemize}
  \item[(S1)] $\Hom_\DAA(\p,M[i])=0$ for any $M\in\A$ and $i\neq 0$ or $1$.
  \item[(S2)] $\Hom_\DAA(\p,\p[1])=0$.
  \item[(S3)] For any $M\in\A$, if $\Hom_\DAA(\p,M[i])=0$ for any $i\in\mathbb{Z}$, then $M =0$.
\end{itemize}
\end{definition}

\begin{remark}
When $\A$ is the module category of a finite dimensional $\k$-algebra $A$, we show in Corollary~\ref{cor:alg} that the 2-term silting complexes defined here are the same as the 2-term silting complexes in $K^b(\proj A)$ considered 
in the previous sections of this paper.
\end{remark}

We need the following well-known result concerning truncations in $\DAA$.

\begin{lemma}\label{lem:triangles}
Let $\x \in \DAA$ be an object with $H^i(\x) = 0$ for $i<m$ and $i>n$.
Then there exist triangles
\[\x_{i-1}\rightarrow\x_i\rightarrow H^i(\x)[-i]\rightarrow \x_{i-1}[1]\]
for $i\in\mathbb{Z}$ such that $\x_i=0$ for $i<m$ and $\x_n=\x$, where $H^i(\x)$ is the $n$-th cohomology of $\x$.
\end{lemma}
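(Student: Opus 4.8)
\textbf{Proof proposal for Lemma~\ref{lem:triangles}.}

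The plan is to construct the objects $\x_i$ and the triangles by an inductive application of the standard truncation functors associated with the natural $t$-structure on $\DAA$, reading them off degree by degree from the bottom of the complex. Concretely, for each integer $i$ I would set $\x_i \colon= \tau_{\leq i}\,\x$, the (cohomological) truncation of $\x$ in degrees $\leq i$. Since $H^j(\x) = 0$ for $j<m$, the truncation $\tau_{\leq i}\x$ is acyclic, hence isomorphic to $0$ in $\DAA$, for all $i<m$; and since $H^j(\x) = 0$ for $j>n$, the canonical morphism $\tau_{\leq n}\x \to \x$ is a quasi-isomorphism, so $\x_n = \x$. This takes care of the two boundary conditions claimed in the statement.

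Next I would produce the triangles. The key input is the canonical distinguished triangle relating consecutive truncations,
\[\tau_{\leq i-1}\x \longrightarrow \tau_{\leq i}\x \longrightarrow \tau_{\geq i}\tau_{\leq i}\x \longrightarrow (\tau_{\leq i-1}\x)[1],\]
together with the elementary fact that $\tau_{\geq i}\tau_{\leq i}\x$ is concentrated in a single cohomological degree $i$, so that it is (canonically) isomorphic in $\DAA$ to $H^i(\x)[-i]$. Substituting this identification into the triangle above yields exactly the asserted triangle
\[\x_{i-1}\rightarrow\x_i\rightarrow H^i(\x)[-i]\rightarrow \x_{i-1}[1]\]
for each $i\in\mathbb{Z}$. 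For $i<m$ both $\x_{i-1}$ and $\x_i$ vanish and $H^i(\x) = 0$, so the triangle is the trivial one; for $m\leq i\leq n$ it is the content of the statement; and for $i>n$ one has $\x_{i-1}=\x_i=\x$ and $H^i(\x)=0$, again trivially.

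I do not anticipate a genuine obstacle here: the statement is exactly the ``Postnikov tower'' of $\x$ with respect to the standard $t$-structure, and everything reduces to the defining axioms of a $t$-structure on a triangulated category (the existence and functoriality of $\tau_{\leq i}$ and $\tau_{\geq i}$, and the octahedral-type triangle linking them) applied to $\DAA$, which is triangulated because $\A$ is abelian. The only point worth a sentence of care is the identification $\tau_{\geq i}\tau_{\leq i}\x \cong H^i(\x)[-i]$: an object of $\DAA$ whose cohomology is concentrated in degree $i$ is isomorphic to $H^i(\x)[-i]$, which one sees by truncating once more (or by the fact that the heart of the standard $t$-structure is $\A$ itself, embedded via $M\mapsto M[0]$). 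Since $\A$ is only assumed $\Ext$-finite, and the $t$-structure in question is the tautological one on the derived category of any abelian category, no finiteness hypothesis is actually needed for this lemma — hence the ``well-known'' in the statement — and I would simply cite, e.g., \cite[Section~I.4]{hrs} or the standard references on $t$-structures for the truncation formalism.
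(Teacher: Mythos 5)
Your proof is correct, and it is exactly the standard argument: the paper itself gives no proof of Lemma~\ref{lem:triangles}, merely introducing it with the phrase ``We need the following well-known result concerning truncations,'' so the expected justification is precisely the truncation formalism for the standard $t$-structure on $D^b(\A)$ that you spell out (taking $\x_i = \tau_{\leq i}\x$, using the canonical triangle between consecutive truncations, and identifying the cone with $H^i(\x)[-i]$). Your observation that no finiteness hypothesis on $\A$ is needed here is also accurate.
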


We have the following immediate consequences.

\begin{lemma}\label{lem:homology}
Let $\p$ be a complex in $\DAA$ satisfying (S1). For any $\x\in\DAA$ and $i\in\mathbb{Z}$, there exists an exact sequence
\[0\rightarrow \Hom_\DAA(\p,H^{i-1}(\x)[1])\rightarrow \Hom_\DAA(\p,\x[i])\rightarrow \Hom_\DAA(\p,H^i(\x))\rightarrow0.\]
\end{lemma}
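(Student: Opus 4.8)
The plan is to prove Lemma~\ref{lem:homology} by applying Lemma~\ref{lem:triangles} to the object $\x$ and then analysing the long exact sequences obtained by applying $\Hom_\DAA(\p,-)$ to the resulting triangles, using condition (S1) to kill the unwanted terms. Concretely, I would first reduce to the case $i=0$ by shifting: replacing $\x$ by $\x[i]$ changes $H^j(\x)$ to $H^{j+i}(\x)$ appropriately, so it suffices to produce the exact sequence
\[0\rightarrow \Hom_\DAA(\p,H^{-1}(\x)[1])\rightarrow \Hom_\DAA(\p,\x)\rightarrow \Hom_\DAA(\p,H^0(\x))\rightarrow0.\]
By Lemma~\ref{lem:triangles} there are triangles $\x_{j-1}\rightarrow \x_j\rightarrow H^j(\x)[-j]\rightarrow \x_{j-1}[1]$ with $\x_j=0$ for $j\ll 0$ and $\x_n=\x$ for $n\gg 0$. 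Applying $\Hom_\DAA(\p,-)$ to each such triangle gives a long exact sequence in which the terms are of the form $\Hom_\DAA(\p,H^j(\x)[k-j])$; by (S1) such a term vanishes unless $k-j\in\{0,1\}$, i.e. unless $k=j$ or $k=j+1$.

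The key step is then to show by induction on $j$ that $\Hom_\DAA(\p,\x_j[k])=0$ for $k\neq 0,1$, and moreover that for $k=0$ the natural map $\Hom_\DAA(\p,\x_j)\to\Hom_\DAA(\p,\x_{j+1})$ behaves controllably. Starting from $\x_j=0$ for small $j$ and feeding the triangles through, the vanishing of the "off-diagonal" Hom-groups propagates: in the long exact sequence coming from $\x_{j-1}\to\x_j\to H^j(\x)[-j]\to$, the contribution $\Hom_\DAA(\p,H^j(\x)[k-j])$ is nonzero only for $k=j,j+1$, so only finitely many degrees are ever affected, and one checks that the connecting maps assemble so that, for the top object $\x=\x_n$, we get precisely a short exact sequence with left term $\Hom_\DAA(\p,H^{-1}(\x)[1])$ (the "$k=j+1$ with $j=-1$" contribution sitting in internal degree $0$), middle term $\Hom_\DAA(\p,\x)$, and right term $\Hom_\DAA(\p,H^0(\x))$ (the "$k=j$ with $j=0$" contribution). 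All other potential contributions $H^j(\x)[k-j]$ with $k=0$ land in degrees $j\neq 0,-1$ and are forced to cancel in pairs along the tower, or are zero outright by (S1) once one tracks which internal degree $k$ they occupy.

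Rather than carrying out the full induction, a cleaner route — and the one I would actually write — is to use the two-step truncation directly: let $\tau^{\le -1}\x$ and $\tau^{\ge 0}\x$ be obtained by iterating Lemma~\ref{lem:triangles}, giving a triangle $\tau^{\le -1}\x\to\x\to\tau^{\ge 0}\x\to(\tau^{\le -1}\x)[1]$. Applying $\Hom_\DAA(\p,-)$, the groups $\Hom_\DAA(\p,\tau^{\le -1}\x[k])$ vanish for $k\ge 1$ except $k=1$ where it equals $\Hom_\DAA(\p,H^{-1}(\x)[1])$ (all lower cohomologies contribute only in degrees $\ge 2$, killed by (S1)), and $\Hom_\DAA(\p,\tau^{\ge 0}\x[k])$ vanishes for $k\le 0$ except $k=0$ where it equals $\Hom_\DAA(\p,H^0(\x))$ (higher cohomologies contribute only in negative degrees, again killed by (S1)); one also needs $\Hom_\DAA(\p,\tau^{\le -1}\x)=0$ and $\Hom_\DAA(\p,\tau^{\ge 0}\x[-1])=0$, which follow the same way. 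The long exact sequence for the triangle then collapses to exactly the asserted four-term exact sequence. The main obstacle, and the only place requiring care, is justifying these "collapsed" identifications $\Hom_\DAA(\p,\tau^{\le -1}\x[1])\cong\Hom_\DAA(\p,H^{-1}(\x)[1])$ and $\Hom_\DAA(\p,\tau^{\ge 0}\x)\cong\Hom_\DAA(\p,H^0(\x))$ cleanly — this is a small induction on the length of the cohomology using Lemma~\ref{lem:triangles} and (S1) at each stage, and I would state it as a brief sublemma rather than belabour it.
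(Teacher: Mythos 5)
Your overall strategy --- truncate/filter $\x$ using Lemma~\ref{lem:triangles}, apply $\Hom_\DAA(\p,-)$, and use (S1) to kill terms --- is the same one the paper uses; the paper simply works directly with the triangles $\x_{i-2}\to\x_{i-1}\to\x_i$ and extracts the exact sequence from two consecutive ones, whereas you package the same bookkeeping into a single two-step truncation triangle. That reorganization is legitimate, and your first paragraph (which you decide not to write out) describes the correct picture.

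However, the "cleaner route" in your second paragraph contains an off-by-one error in the identifications, and as stated the argument does not deliver the lemma. The object $\tau^{\le -1}\x$ has cohomology concentrated in degrees $\le -1$; shifting by $[1]$ moves it to degrees $\le -2$, so $H^{-1}$ and $H^0$ of $\tau^{\le -1}\x[1]$ both vanish, and the filtration argument gives $\Hom_\DAA(\p,\tau^{\le -1}\x[1])=0$, not $\Hom_\DAA(\p,H^{-1}(\x)[1])$ as you claim. Conversely, for $k=0$ the only term that survives (S1) is the one coming from $H^{-1}(\x)$ (via $\Hom_\DAA(\p,H^{-1}(\x)[1])$), so $\Hom_\DAA(\p,\tau^{\le -1}\x)\cong\Hom_\DAA(\p,H^{-1}(\x)[1])$, not $0$. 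You have these two swapped. If one plugs your stated values into the long exact sequence of the triangle $\tau^{\le -1}\x\to\x\to\tau^{\ge 0}\x\to$, one gets
\[
0\to 0\to\Hom_\DAA(\p,\x)\to\Hom_\DAA(\p,H^0(\x))\to\Hom_\DAA(\p,H^{-1}(\x)[1])\to\cdots,
\]
which exhibits $\Hom_\DAA(\p,\x)$ as a \emph{sub}object of $\Hom_\DAA(\p,H^0(\x))$ --- not the asserted short exact sequence. With the corrected identifications ($\Hom_\DAA(\p,\tau^{\le -1}\x)\cong\Hom_\DAA(\p,H^{-1}(\x)[1])$, $\Hom_\DAA(\p,\tau^{\le -1}\x[1])=0$, together with your two correct statements for $\tau^{\ge 0}\x$), the same long exact sequence does collapse to the desired
\[
0\to\Hom_\DAA(\p,H^{-1}(\x)[1])\to\Hom_\DAA(\p,\x)\to\Hom_\DAA(\p,H^0(\x))\to 0.
\]
So the idea is right and the fix is purely to re-index, but the step ``the long exact sequence then collapses to exactly the asserted four-term exact sequence'' is false as written and needs this correction.
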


\begin{proof}
Applying $\Hom_\DAA(\p,-)$ to the triangles in Lemma~\ref{lem:triangles}, by (S1), we have
\begin{align}
\Hom_\DAA(\p,\x[i]) & \cong\Hom_\DAA(\p,\x_i[i]) \label{eq:1} \\
\Hom_\DAA(\p,\x_{i-2}[i]) & =0 \label{eq:2} \\
\Hom_\DAA(\p,\x_{i-2}[i+1]) & \cong\Hom_\DAA(\p,\x_{i-1}[i+1])=0 \label{eq:3}
\end{align}
and two exact sequences
\begin{multline*}
\Hom_\DAA(\p,\x_{i-2}[i])\rightarrow\Hom_\DAA(\p,\x_{i-1}[i])\rightarrow\Hom_\DAA(\p,H^{i-1}(\x)[-(i-1)][i])\\\rightarrow\Hom_\DAA(\p,\x_{i-2}[i+1])
\end{multline*}
and
\begin{multline*}
0\rightarrow\Hom_\DAA(\p,\x_{i-1}[i])\rightarrow\Hom_\DAA(\p,\x_{i}[i])\rightarrow\Hom_\DAA(\p,H^{i}(\x)[-i][i])\\\rightarrow\Hom_\DAA(\p,\x_{i-1}[i+1]).
\end{multline*}
By the first exact sequence, together with \eqref{eq:2} and \eqref{eq:3}, we have $\Hom_\DAA(\p,\x_{i-1}[i])\cong\Hom_\DAA(\p,H^{i-1}(\x)[1])$. Then by the second exact sequence, together with \eqref{eq:1} and \eqref{eq:3}, we get the required exact sequence.
\end{proof}

\begin{lemma}\label{lem:pre}
Let $\p$ be a complex in $\DAA$ satisfying (S1).
Then the following hold.
\begin{itemize}
  \item[(a)] $H^i(\p)=0$ for any $i>0$ or $i<-1$.
  \item[(b)] $\Hom_\DAA(\p,\p[i])=0$ for any $i>1$ or $i<-1$.
  \item[(c)] There is a triangle in $\DAA$
             \begin{equation}\label{eq:t1}
             H^{-1}(\p)[1]\to \p\to H^0(\p)\to H^{-1}(\p)[2].
             \end{equation}
  \item[(d)] For any $M\in\A$, the triangle \eqref{eq:t1} induces a functorial isomorphism
             \begin{equation}\label{eq:iso}
             \Hom_\DAA(H^0(\p),M)\cong\Hom_\DAA(\p,M),
             \end{equation}
             and a monomorphism
             \begin{equation}\label{eq:mono}
             \Hom_\DAA(H^0(\p),M[1])\hookrightarrow\Hom_\DAA(\p ,M[1]).
             \end{equation}
\end{itemize}
\end{lemma}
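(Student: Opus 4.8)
The plan is to prove the four parts in order, using Lemma~\ref{lem:homology} as the workhorse together with the defining property (S1). For part (a), apply Lemma~\ref{lem:homology} with $\x = \p$: for any $i \in \mathbb{Z}$ we get a short exact sequence with outer terms $\Hom_\DAA(\p, H^{i-1}(\p)[1])$ and $\Hom_\DAA(\p, H^i(\p))$. If $H^i(\p) \neq 0$ for some $i > 0$, then taking the largest such $i$, the term $\Hom_\DAA(\p, H^i(\p))$ would sit inside $\Hom_\DAA(\p, \p[i])$, and I need this to be nonzero to derive a contradiction with (S3)-type reasoning — wait, more carefully: the right approach is that $H^i(\p)$ is a nonzero object of $\A$, so $\Hom_\DAA(\p, H^i(\p))$ need not itself be nonzero. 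Instead I should argue directly: since $\p \in \DAA$ is a bounded complex, let $n$ be maximal with $H^n(\p) \neq 0$. Then $\Hom_\DAA(\p, \p[n]) \supseteq \Hom_\DAA(\p, H^n(\p)[-n][n]) $ via a surjection coming from Lemma~\ref{lem:homology} (the $H^{i-1}$ term vanishes when $i-1 > n$), and one shows this forces a nonzero self-extension contradicting (S2) unless $n \le 0$; dually using the minimal degree and (S2) gives $n \ge -1$ for the bottom. Actually the cleanest route: the support of $\p$ in cohomology must be an interval, and if it had length $\geq 3$ or sat outside $[-1,0]$, a standard truncation argument with (S2) produces $\Hom_\DAA(\p,\p[i]) \neq 0$ for some $i \geq 2$ or $\le -2$; I will present this via the triangles of Lemma~\ref{lem:triangles}.

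For part (b), this follows formally from (a): once $\p$ has cohomology concentrated in degrees $-1$ and $0$, apply $\Hom_\DAA(\p, -)$ to the truncation triangle and use (S1) to see $\Hom_\DAA(\p, \p[i]) = 0$ for $|i| > 1$. Concretely, from the triangle $H^{-1}(\p)[1] \to \p \to H^0(\p) \to H^{-1}(\p)[2]$ one gets a long exact sequence relating $\Hom_\DAA(\p, \p[i])$ to $\Hom_\DAA(\p, H^{-1}(\p)[i+1])$ and $\Hom_\DAA(\p, H^0(\p)[i])$, both of which vanish for $i > 1$ or $i < -1$ by (S1). Part (c) is simply the observation that, given part (a), the truncation triangle of Lemma~\ref{lem:triangles} (with $m = -1$, $n = 0$) reads precisely $\x_{-1} \to \x_0 \to H^0(\p)[0] \to \x_{-1}[1]$ with $\x_{-1} = H^{-1}(\p)[1]$ and $\x_0 = \p$; rotating gives \eqref{eq:t1}. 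I should state this explicitly rather than just cite the lemma, since the indexing needs to be matched up.

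For part (d), apply $\Hom_\DAA(-, M)$ and $\Hom_\DAA(-, M[1])$ to the triangle \eqref{eq:t1} for a fixed $M \in \A$. The relevant long exact sequence has the terms $\Hom_\DAA(H^{-1}(\p)[2], M) = \Hom_\DAA(H^{-1}(\p), M[-2])$ and $\Hom_\DAA(H^{-1}(\p)[1], M) = \Hom_\DAA(H^{-1}(\p), M[-1])$ surrounding $\Hom_\DAA(H^0(\p), M) \to \Hom_\DAA(\p, M)$; both surrounding terms are $\Ext$ groups in negative degree between objects of $\A$, hence zero, giving the isomorphism \eqref{eq:iso}. Shifting by one degree, the term $\Hom_\DAA(H^{-1}(\p), M[-1])$ sits to the left of $\Hom_\DAA(H^0(\p), M[1]) \to \Hom_\DAA(\p, M[1])$; vanishing of that left term gives the monomorphism \eqref{eq:mono}. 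Functoriality in $M$ is automatic since everything comes from applying a representable functor to a fixed triangle. The main obstacle is part (a): pinning down the truncation argument so that a cohomology support outside $\{-1,0\}$ genuinely yields a nonzero $\Hom_\DAA(\p, \p[i])$ with $|i| \geq 2$, contradicting (b) — here one must be slightly careful about whether the nonzero self-Hom lands in the right degree, and I expect to need to combine both (S1) and (S2) and possibly use that $\p$ is a nonzero bounded complex, extracting the top nonzero cohomology and pushing it against the bottom.
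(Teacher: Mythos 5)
Your treatment of parts (b), (c) and (d) is correct and matches the paper's proof. For (b), the paper applies Lemma~\ref{lem:homology} directly with $\x=\p$, using that $H^{i-1}(\p)=H^i(\p)=0$ for $i>1$ or $i<-1$; your alternative via the truncation triangle and (S1) is equivalent. For (c), note that no rotation is needed: with $m=-1$, $n=0$ and $\x_{-2}=0$, Lemma~\ref{lem:triangles} gives $\x_{-1}=H^{-1}(\p)[1]$ and the triangle $\x_{-1}\to\p\to H^0(\p)\to\x_{-1}[1]$ is already exactly \eqref{eq:t1}. Your argument for (d) is what the paper does.

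Part (a), however, has a genuine gap, and you flag it yourself as the main obstacle. You correctly observe that $H^n(\p)\neq 0$ does not force $\Hom_\DAA(\p,H^n(\p))\neq 0$, but the repairs you sketch --- invoking (S2), a ``nonzero self-extension'', a ``standard truncation argument'' --- are never carried out, and (S2) is in any case not the right tool: it controls only $\Hom_\DAA(\p,\p[1])$ and gives no leverage on cohomology outside degrees $-1,0$. The argument uses only (S1), and the missing idea is to exhibit an explicit nonzero morphism from $\p$ to a shifted stalk. For the top: if $n$ is maximal with $H^n(\p)\neq 0$, the triangle $\x_{n-1}\to\p\to H^n(\p)[-n]\to\x_{n-1}[1]$ from Lemma~\ref{lem:triangles} supplies a map $\p\to H^n(\p)[-n]$ inducing an isomorphism on $H^n$, hence nonzero; since $H^n(\p)\in\A$, (S1) forces $-n\in\{0,1\}$, i.e.\ $n\leq 0$. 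For the bottom there is no truncation map pointing the needed direction (the natural map runs $\tau_{\leq m}\p\to\p$, not $\p\to\tau_{\leq m}\p$), so one instead represents $\p$ by a bounded chain complex $\cdots\to P^i\xrightarrow{d^i}P^{i+1}\to\cdots$ and considers the chain map to the stalk of $P^m/\im d^{m-1}$ in degree $m$ given by the quotient $P^m\to P^m/\im d^{m-1}$; this induces a monomorphism on $H^m$, hence is nonzero whenever $H^m(\p)\neq 0$, and again (S1) forces $-m\in\{0,1\}$, i.e.\ $m\geq-1$. Without producing such maps, the claim that bad cohomological support yields $\Hom_\DAA(\p,\p[i])\neq0$ for some $|i|\geq 2$ does not go through --- and in any case you could not appeal to its negation, since that is part (b), which you prove only after (a).
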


\begin{proof}
Let $n$ be the maximal number such that $H^n(\p)\neq0$ and
let $m$ be the minimal number such that $H^m(\p)\neq0$.
Then, on the one hand, there is a nonzero map $\p\to H^n(\p)[-n]$
by Lemma~\ref{lem:triangles}.
On the other hand, let $\p$ be of the form $\cdots\to P^i\xrightarrow{d^i} P^{i+1}\to\cdots$, then there is a nonzero map from $\p$ to $(P^m/\im d^{m-1})[m]$. Hence, by (S1), we have that $n\leq 0$ and $m\geq-1$. Thus we have assertion (a). Then (b) follows from Lemma \ref{lem:homology} and (c) follows from Lemma~\ref{lem:triangles}. Finally, applying $\Hom_\DAA(-,M)$ to the triangle \eqref{eq:t1} yields the functorial isomorphism \eqref{eq:iso} and the monomorphism \eqref{eq:mono}.
\end{proof}

Let $\p$ be a complex in $\DAA$ satisfying (S1). For an integer $m$, consider the pair of subcategories
\[D^{\leq m}(\p) = \{\x\in \DAA\mid \Hom_{\DAA}(\p,\x[i])=0\text{, for any $i>m$}\}\]
\noindent and
\[D^{\geq m}(\p) = \{\x\in \DAA \mid \Hom_{\DAA}(\p,\x[i])=0\text{, for any $i<m$}\}\]
\noindent in the derived category $\DAA$. Let $\T(\p) = D^{\leq 0}(\p) \cap \A$ and $\F(\p) =  D^{\geq 1}(\p) \cap \A$. Then by (S1), we have
\[\T(\p) = \{X\in \A\mid \Hom_{\DAA}(\p,X[1])=0\}\]
\noindent and
\[\F(\p) = \{X\in \A \mid \Hom_{\DAA}(\p,X)=0\}.\]

We now obtain results similar to \cite[Theorem~2.10]{hkm}.

\begin{lemma}\label{lem:tor}
Let $\p$ be a complex in $\DAA$ satisfying (S1). Then the following hold.
\begin{itemize}
  \item[(a)] $\T(\p)$ is closed under factor objects and $\F(\p)$ is closed under subobjects.
  \item[(b)] $\p$ satisfies (S2) if and only if $H^0(\p)\in\T(\p)$.
  \item[(c)] $\p$ satisfies (S3) if and only if $\T(\p)\cap\F(\p)=\{0\}$.
  \item[(d)] If $\p$ satisfies (S2) and (S3), then:
  \begin{itemize}
  \item[(i)] for each $M\in\A$, there is an exact sequence
             \[
             0\rightarrow tM\rightarrow M\rightarrow M/tM\rightarrow 0
             \]
             with $tM\in\T(\p)$ and $M/tM\in\F(\p)$;
 	\item[(ii)] $(\T(\p), \F(\p))$ is a torsion pair in $\A$;
    \item[(iii)]  $\T(\p)= \Fac H^0(\p)$;
    \item[(iv)] an object $M\in\T(\p)$ is $\Ext$-projective if and only if $M\in\add H^0(\p)$.
\end{itemize}
\end{itemize}
\end{lemma}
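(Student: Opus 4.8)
The plan is to prove the assertions roughly in the order given, bootstrapping from Lemma~\ref{lem:pre} and the exact sequence of Lemma~\ref{lem:homology}. For (a): closure of $\T(\p)$ under factor objects follows by applying $\Hom_\DAA(\p,-[1])$ to a short exact sequence $0\to X'\to X\to X''\to 0$ in $\A$; the long exact sequence gives $\Hom_\DAA(\p,X''[1])\hookleftarrow$ a quotient of $\Hom_\DAA(\p,X[1])=0$ sitting between $\Hom_\DAA(\p,X[1])$ and $\Hom_\DAA(\p,X'[2])=0$ (the last vanishes by (S1)), so $\Hom_\DAA(\p,X''[1])=0$. Dually, applying $\Hom_\DAA(\p,-)$ and using $\Hom_\DAA(\p,X''[-1])=0$ from (S1) gives closure of $\F(\p)$ under subobjects. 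For (b): by Lemma~\ref{lem:pre}(b), (S2) reduces to $\Hom_\DAA(\p,\p[1])=0$, and then Lemma~\ref{lem:homology} applied with $\x=\p$, $i=1$, together with $H^1(\p)=0$ and the identification $\Hom_\DAA(\p,H^0(\p)[1])\hookleftarrow$ stuff, shows $\Hom_\DAA(\p,\p[1])\cong\Hom_\DAA(\p,H^0(\p)[1])$; this is $0$ iff $H^0(\p)\in\T(\p)$. For (c): one direction is immediate since $\F(\p)$ by definition kills $\Hom_\DAA(\p,-)$ and $\T(\p)$ kills $\Hom_\DAA(\p,-[1])$, so an object in the intersection is killed in all degrees by (S1), forcing it to be $0$ by (S3); conversely, if $M$ is killed in all degrees then $M\in\T(\p)\cap\F(\p)=\{0\}$.

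The substantive part is (d). Assume (S2) and (S3). For (i), given $M\in\A$, I would use the canonical truncation triangle for $M$ with respect to the $t$-structure whose aisle is $D^{\le 0}(\p)$ — but since we are restricted to working inside $\A$, the cleaner route is: consider the map $\Hom_\DAA(\p,M)$; applying $\Hom_\DAA(\p,-)$ to the triangle~\eqref{eq:t1} and using adjunction-type identities, produce a universal map from $H^0(\p)^{(?)}$, or more directly, take $tM$ to be the trace of $H^0(\p)$ in $M$ (the sum of images of all maps $H^0(\p)\to M$). Then $M/tM$ has no nonzero map from $H^0(\p)$, hence by the isomorphism~\eqref{eq:iso} of Lemma~\ref{lem:pre}(d) no nonzero map from $\p$, so $M/tM\in\F(\p)$. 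That $tM\in\T(\p)$ needs: $tM$ is a factor of a direct sum of copies of $H^0(\p)$ (using Ext-finiteness to make the index set finite), and $H^0(\p)\in\T(\p)$ by (b), so $tM\in\Fac H^0(\p)\subseteq\T(\p)$ by (a). This simultaneously gives (iii) in the inclusion $\T(\p)\subseteq\Fac H^0(\p)$: if $M\in\T(\p)$ then $M/tM\in\T(\p)\cap\F(\p)=\{0\}$ by (c), so $M=tM\in\Fac H^0(\p)$; the reverse inclusion $\Fac H^0(\p)\subseteq\T(\p)$ is (a)+(b). For (ii), the torsion-pair axioms are: $\Hom_\A(\T(\p),\F(\p))=0$, which holds since a map $T\to F$ has image a factor of $T$ (in $\T(\p)$ by (a)) and a subobject of $F$ (in $\F(\p)$ by (a)), hence in $\T(\p)\cap\F(\p)=0$; and the existence of the canonical sequence, which is (i). For (iv): if $M\in\add H^0(\p)$ then $\Ext^1_\A(M,\T(\p))=\Hom_\DAA(\p,\T(\p)[1])$-type vanishing — more precisely, $\Ext^1_\A(H^0(\p),N)\hookrightarrow\Hom_\DAA(\p,N[1])$ would be backwards; instead use~\eqref{eq:mono} together with the fact that for $N\in\T(\p)$ we have $\Hom_\DAA(\p,N[1])=0$, hence $\Hom_\DAA(H^0(\p),N[1])=\Ext^1_\A(H^0(\p),N)=0$, so $H^0(\p)$ is $\Ext$-projective in $\T(\p)$, and so is every summand. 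Conversely, if $M\in\T(\p)$ is $\Ext$-projective, write $M$ as a quotient $H^0(\p)^n\twoheadrightarrow M$ with kernel $K$; since $\T(\p)$ is closed under factors and (with a short argument using that $(\T(\p),\F(\p))$ is a torsion pair, or closure under extensions plus the fact that $\F(\p)$-part of $K$ maps into $\F$) $K\in\T(\p)$, the extension $0\to K\to H^0(\p)^n\to M\to 0$ splits by $\Ext$-projectivity of $M$, so $M\in\add H^0(\p)$.

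The main obstacle I anticipate is the careful handling of (d)(i): one must verify that the trace submodule $tM$ genuinely lands in $\T(\p)$ and that $M/tM$ genuinely lands in $\F(\p)$ using \emph{only} the two identities from Lemma~\ref{lem:pre}(d) and Ext-finiteness (to keep sums finite so that ``factor of a finite direct sum of copies of $H^0(\p)$'' makes sense and closure properties apply); the subtlety is that $\Hom_\DAA(\p,-)$ and $\Hom_\A(H^0(\p),-)$ agree on $\A$ by~\eqref{eq:iso} but on shifts one only has the monomorphism~\eqref{eq:mono}, so the argument for $\F(\p)$ must be run on the level of $\Hom$ (degree $0$), not $\Ext^1$. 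Everything else is a formal consequence of the long exact sequences and the torsion-pair formalism, so I would keep those parts brief.
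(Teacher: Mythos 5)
Your plan is in essence the paper's proof: parts (a)--(c) are handled the same way, and for (d) you build $tM$ as the trace of $H^0(\p)$ in $M$, which is exactly the image of a right $\add H^0(\p)$-approximation as used in the paper. Parts (ii) and (iii) then fall out as you describe. However, there is a genuine gap in your treatment of (d)(i), and your diagnostic remark at the end points in the wrong direction.

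You assert that ``$M/tM$ has no nonzero map from $H^0(\p)$'' as though this were a formal property of the trace; it is not. In general $\Hom_\A(X, M/\operatorname{tr}_X M)\neq 0$ (take $X=\mathbb Z/2$, $M=\mathbb Z/4$: the trace is $2\mathbb Z/4$ and the quotient is again $\mathbb Z/2$). What makes the argument work here is a lifting step that does pass through degree $1$. Apply $\Hom_\DAA(\p,-)$ to the triangle $tM\to M\to M/tM\to tM[1]$; since $tM\in\T(\p)$ (by (a) and (b)), $\Hom_\DAA(\p,tM[1])=0$, so $\Hom_\DAA(\p,M)\to\Hom_\DAA(\p,M/tM)$ is surjective. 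Now every map $\p\to M$ factors through $H^0(\p)$ by~\eqref{eq:iso}, hence through the approximation, hence through $tM$; composing with $M\to M/tM$ gives zero, so $\Hom_\DAA(\p,M/tM)=0$, i.e.\ $M/tM\in\F(\p)$. Your closing remark that the argument ``must be run on the level of $\Hom$ (degree $0$), not $\Ext^1$'' is precisely what would cause the proof to fail: the vanishing $\Hom_\DAA(\p,tM[1])=0$ is indispensable for lifting maps from $M/tM$ back to $M$.

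A smaller imprecision occurs in the converse direction of (d)(iv). Writing $M$ as an arbitrary quotient of $(H^0\p)^n$ does not give control of the kernel $K$; you must take the $(H^0\p)^n\twoheadrightarrow M$ to be a right $\add H^0(\p)$-approximation. Then $\Hom_\DAA(\p,(H^0\p)^n)\to\Hom_\DAA(\p,M)$ is surjective (again via~\eqref{eq:iso}), and the long exact sequence from $K\to (H^0\p)^n\to M$, together with $\Hom_\DAA(\p,(H^0\p)^n[1])=0$, forces $\Hom_\DAA(\p,K[1])=0$, i.e.\ $K\in\T(\p)$. Only then does $\Ext$-projectivity of $M$ in $\T(\p)$ split the sequence.
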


\begin{proof}
For (a), we only prove the assertion for $\T(\p)$. The proof for $\F(\p)$ is similar. For any $M\in\T(\p)$ and an epimorphism $f \colon M\to M'$ in $\A$, apply $\Hom_\DAA(\p,-)$ to the exact sequence $0\to\ker f\to M\to M'\to 0$. Since $\Hom_\DAA(\p,\ker f[2])=0$ by (S1), we have that $\Hom_\DAA(\p,M'[1])=0$. So $M'\in\T(\p)$. Then $\T(\p)$ is closed under factor objects.

By Lemma~\ref{lem:homology}, there is an exact sequence
\[0\to\Hom_\DAA(\p,H^0(\p)[1])\to\Hom_\DAA(\p,\p[1])\to\Hom_\DAA(\p,H^1(\p))\to 0.\]
Because $H^1(\p)=0$ by Lemma~\ref{lem:pre}(a), the assertion (b) follows.

For any $M\in\A$, by definition, $M\in\T(\p)\cap\F(\p)$ if and only if $\Hom_\DAA(\p,M[i])=0$ for any $i\in\mathbb{Z}$. So we have (c).

For (d), consider an arbitrary $M\in\A$.
Since $\A$ is Hom-finite and Krull-Schmidt, 
there is a right $\add H^0(\p)$-approximation $g \colon X\to M$. 
Let $tM =\im g\in\Fac H^0(\p)$ and 
consider the exact sequence $0\to tM\to M\to M/tM\to 0$. 
Since $H^0(\p)$ is in $\T(\p)$ by (b), 
it follows from (a) that so is $tM$. 
On the other hand, by Lemma~\ref{lem:pre}(d), 
each map from $\p$ to $M$ factors through $H^0(\p)$, 
hence it factors through $g$. 
So $\Hom_\DAA(\p,M/tM)=0$ and 
then by definition we have $M/tM\in\F(\p)$. Hence (i) holds, and
(ii) follows by definition.
By (a) and (b), we have $\Fac H^0(\p)\subset\T(\p)$. Let $M\in\T(\p)$. Since $(\T(\p), \F(\p))$ is a torsion pair, we have $M\cong tM\in\Fac H^0(\p)$. Hence we have (iii). For (iv), the proof of \cite[Proposition~2.8(2)]{bz} works here, using (iii) and Lemma~\ref{lem:pre}(d).
\end{proof}

Applying results from \cite{hrs}, we obtain that 2-term silting
complexes induce t-structures also in our setting.

\begin{proposition}\label{prop:general}
Let $\A$ be an $\Ext$-finite abelian category, let $\p$ be a 2-term silting complex in $\DAA$ and let $B=\End_\DAA(\p)$. Then the following hold.
\begin{itemize}
   \item[(a)] $(D^{\leq 0}(\p),D^{\geq 0}(\p))$ is a t-structure in $\DAA$.
   \item[(b)] This t-structure is bounded, in the sense that for any $\x\in D^b(\A)$,
   there is an $s$ such that $\x\in D^{\leq s}(\p)$.
  \item[(c)] $\C(\p) \colon =D^{\leq 0}(\p)\cap D^{\geq 0}(\p)$ is an abelian category, where the short exact sequences are the triangles in $\DAA$ whose terms are in $\A$.
  \item[(d)] $\C(\p)=\{\x\in \DAA\mid H^{0}(\x)\in\T(\p),\ H^{-1}(\x)\in\F(\p)\text{ and }H^i(\x)=0\text{ for $i\neq-1$ or $0$}\}$.
  \item[(e)] $(\F(\p)[1],\T(\p))$ is a torsion pair in $\C(\p)$.
  \item[(f)] $\Hom_\DAA(\p,-)$ gives an equivalence from $\C(\p)$ to $\mod B$.
\end{itemize}
\end{proposition}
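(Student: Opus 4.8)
The plan is to identify $(D^{\leq 0}(\p),D^{\geq 0}(\p))$ with the t-structure obtained by tilting $\DAA=D^b(\A)$ at the torsion pair $(\T(\p),\F(\p))$ of Lemma~\ref{lem:tor}, to quote the general Happel--Reiten--Smal{\o} machinery of \cite{hrs} for (a), (c), (e), and to prove (f) by producing the correct projective generator of the heart.

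\textbf{Parts (a), (b), (d).} By Lemma~\ref{lem:homology}, for $\x\in\DAA$ and $i\in\mathbb{Z}$ one has $\Hom_\DAA(\p,\x[i])=0$ if and only if $H^{i-1}(\x)\in\T(\p)$ and $H^i(\x)\in\F(\p)$. Running this over $i\geq 1$ and over $i\leq -1$ and using $\T(\p)\cap\F(\p)=\{0\}$ (Lemma~\ref{lem:tor}(c)) gives
\[D^{\leq 0}(\p)=\{\x\mid H^i(\x)=0\text{ for }i>0,\ H^0(\x)\in\T(\p)\},\]
\[D^{\geq 0}(\p)=\{\x\mid H^i(\x)=0\text{ for }i<-1,\ H^{-1}(\x)\in\F(\p)\},\]
which are exactly the two aisles of the HRS-tilt of $(\T(\p),\F(\p))$; so (a) follows from \cite{hrs}, and (d) is the intersection of these two descriptions. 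For (b), given $\x\in D^b(\A)$ choose $s$ with $H^i(\x)=0$ for $i\geq s$; then $\x\in D^{\leq s}(\p)$ (and symmetrically $\x\in D^{\geq s'}(\p)$ for $s'$ small, so the t-structure is bounded).

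\textbf{Parts (c), (e).} That the heart of a t-structure is abelian, with short exact sequences precisely the triangles all of whose terms lie in the heart, is standard; I would cite \cite{hrs} (or \cite{h}), which gives (c). For (e): for $Y\in\F(\p)$ and $X\in\T(\p)$ we get $\Hom_{\C(\p)}(Y[1],X)=\Hom_\DAA(Y,X[-1])=\Ext^{-1}_\A(Y,X)=0$; and for $\x\in\C(\p)$ the truncation triangle of Lemma~\ref{lem:triangles} (valid since $H^i(\x)=0$ for $i\neq -1,0$)
\[H^{-1}(\x)[1]\to\x\to H^0(\x)\to H^{-1}(\x)[2]\]
has all three terms in $\C(\p)$ and exhibits $\x$ as an extension of $H^0(\x)\in\T(\p)$ by $H^{-1}(\x)[1]\in\F(\p)[1]$, which is the required torsion sequence.

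\textbf{Part (f), the main point.} By (S2) and Lemma~\ref{lem:pre}(b), $\Hom_\DAA(\p,\p[i])=0$ for all $i>0$, so $\p\in D^{\leq 0}(\p)$ and its truncation $\p':=H^0_{\C(\p)}(\p)\in\C(\p)$ sits in a triangle $\mathbf{N}\to\p\to\p'\to\mathbf{N}[1]$ with $\mathbf{N}\in D^{\leq -1}(\p)$. Straight from the definition, $\Hom_\DAA(\p,\y)=0$ for every $\y\in D^{\leq -1}(\p)$, and $D^{\leq -1}(\p)$ is closed under $[1]$; applying $\Hom_\DAA(\p,-)$ to the triangle thus gives $B=\Hom_\DAA(\p,\p)\cong\Hom_\DAA(\p,\p')$, while applying $\Hom_\DAA(-,\x)$ for $\x\in\C(\p)\subseteq D^{\geq 0}(\p)$ and using the t-structure vanishing $\Hom_\DAA(D^{\leq -1}(\p),D^{\geq 0}(\p))=0$ gives a functorial isomorphism $\Hom_\DAA(\p,\x)\cong\Hom_\DAA(\p',\x)=\Hom_{\C(\p)}(\p',\x)$; in particular $\End_{\C(\p)}(\p')\cong B$. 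Next, $\p'$ is projective in $\C(\p)$: for $\x\in\C(\p)$ the same triangle together with $\Hom_\DAA(\p,\x[1])=0$ (Lemma~\ref{lem:homology} in degree $1$, since $H^0(\x)\in\T(\p)$, $H^1(\x)=0$) and $\Hom_\DAA(\mathbf{N}[1],\x[1])=\Hom_\DAA(\mathbf{N},\x)=0$ forces $\Ext^1_{\C(\p)}(\p',\x)=\Hom_\DAA(\p',\x[1])=0$. And $\p'$ generates $\C(\p)$: given $\x\in\C(\p)$, pick a $B$-generating set $f_1,\dots,f_n$ of $\Hom_\DAA(\p,\x)\cong\Hom_{\C(\p)}(\p',\x)$, form $f=(f_i)\colon(\p')^n\to\x$ in $\C(\p)$, and note its cokernel $Z\in\C(\p)$ has $\Hom_\DAA(\p,Z)=0$ by exactness of $\Hom_{\C(\p)}(\p',-)$, whence $Z=0$ because Lemma~\ref{lem:homology} in degree $0$ forces $H^{-1}(Z),H^0(Z)\in\T(\p)\cap\F(\p)=\{0\}$. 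Thus $\C(\p)$ is abelian (by (c)), $\k$-linear and $\Hom$-finite (as $\DAA$ is, $\A$ being $\Ext$-finite), with a finitely generated projective generator $\p'$ of endomorphism algebra $B$; every object is then finitely presented by copies of $\p'$, and the standard comparison-of-presentations argument shows $\Hom_{\C(\p)}(\p',-)\cong\Hom_\DAA(\p,-)$ is an equivalence $\C(\p)\to\mod B$. I expect (f) to be the main obstacle: the point is that $\p$ itself need not lie in $\C(\p)$ (it does precisely when $\Hom_\DAA(\p,\p[-1])=0$, which is not implied by (S1)--(S3)), so one must pass to the truncation $\p'$ and then push through the comparison of the two $\Hom$-functors.
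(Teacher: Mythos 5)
Your proposal is correct. For parts (a)--(e) it is essentially the paper's route: derive the cohomological descriptions of $D^{\leq 0}(\p)$ and $D^{\geq 0}(\p)$ from Lemma~\ref{lem:homology} and $\T(\p)\cap\F(\p)=\{0\}$, identify the t-structure with the Happel--Reiten--Smal{\o} tilt at $(\T(\p),\F(\p))$, and cite \cite{hrs}; your boundedness argument for (b) is slightly more direct than the paper's, which instead deduces $D^{\leq 0}(\p)=\bigcup_{z>0}\A[z]\ast\cdots\ast\A[1]\ast\T(\p)$ from Lemma~\ref{lem:triangles}. The genuine difference is part (f): the paper simply refers to the proof of \cite[Proposition~3.13]{iy}, whereas you give a complete, self-contained argument by truncating $\p$ with respect to the new t-structure to obtain $\p'=\tau^{\geq 0}\p\in\C(\p)$, showing via the truncation triangle $\mathbf{N}\to\p\to\p'\to\mathbf{N}[1]$ (with $\mathbf{N}\in D^{\leq -1}(\p)$) that $\Hom_\DAA(\p,-)$ and $\Hom_{\C(\p)}(\p',-)$ agree on $\C(\p)$ and on $\C(\p)[1]$, deducing that $\p'$ is a projective generator with $\End_{\C(\p)}(\p')\cong B$, and invoking the standard Morita comparison. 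You also correctly flag that $\p$ itself need not lie in $\C(\p)$ --- since $\Hom_\DAA(\p,\p[-1])\cong\Hom_\A(H^0(\p),H^{-1}(\p))$ is not forced to vanish by (S1)--(S3) --- which is exactly why one must pass to $\p'$, and is also the crux of the cited Iyama--Yang argument. Your version buys self-containedness at the cost of some length; the paper's is terser by deferring to \cite{iy}.
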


\begin{proof}
By Lemma~\ref{lem:homology}, we have that
\[D^{\leq 0}(\p)=\{\x\in \DAA\mid \Hom_{\DAA}(\p,H^{i-1}(\x)[1])=\Hom_{\DAA}(\p,H^{i}(\x))=0\text{, for any $i>0$}\}.\]
Then by (S1) and (S3), we have
\begin{equation}\label{eq:neg-part} D^{\leq 0}(\p)=\{\x\in \DAA\mid H^i(\x)=0\text{ for any $i>0$, and }H^0(\x)\in\T(\p)\}.\end{equation}
Similarly, we have
\[D^{\geq 0}(\p)=\{\x\in \DAA\mid H^i(\x)=0\text{ for any $i<-1$, and }H^{-1}(\x)\in\F(\p)\}.\]
Hence by \cite[Proposition~I.2.1, Corollary~I.2.2]{hrs},
(a), (c), (d) and (e) follow. 
To prove (b), consider equation \eqref{eq:neg-part}.
Combined with Lemma~\ref{lem:triangles}, this gives
\begin{equation}\label{eq:resolution}
D^{\leq0}(\p)=\bigcup_{z>0}\A[z]\ast\cdots \ast\A[1]\ast\T(\p).
\end{equation}
Let $\x$ be an arbitrary object in $\DAA$. By  Lemma~\ref{lem:triangles}, it follows that $\x \in
\A[-m]\ast \cdots \ast \A[-n]$ for integers $m<n$.
Then $\x[n+1]$ is in $\A[-m+n+1] \ast \cdots\ast \A[1]$, and
hence $\x \in D^{\leq (n+1)}(\p)$. This proves (b).
For (f), we refer to the proof of \cite[Proposition~3.13]{iy}.
\end{proof}

Recall from \cite{hrs} that an object in $\A$ is called a {\em tilting object} if there exists a torsion pair $(\T,\F)$ in $\A$ satisfying the following properties.
\begin{itemize}
  \item[(T1)] $\T$ is a tilting torsion class, that is, $\T$ is a cogenerator for $\A$.
  \item[(T2)] $\T=\Fac T$.
  \item[(T3)] $\Ext_\A^i(T,X)=0$ for $X\in\T$ and $i>0$.
  \item[(T4)] If $Z\in\T$ satisfies $\Ext_\A^i(Z,X)=0$ for all $X\in\T$ and $i>0$, then $Z\in\add T$.
  \item[(T5)] If $\Ext_\A^i(T,X)=0$ for $i\geq 0$ and $X$ in $\A$ then $X=0$.
\end{itemize}
The following result gives the relationship between 2-term silting complexes and tilting objects.

\begin{proposition}\label{prop:back}
Let $T$ be an object in an $\Ext$-finite abelian category $\A$. Then $T$ is a tilting object in $\A$ if and only if it is a 2-term silting complex in $\DAA$.
\end{proposition}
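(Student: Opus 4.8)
The plan is to prove the two implications separately, exploiting the characterizations of 2-term silting complexes already established in this section, especially Lemma~\ref{lem:tor} and Proposition~\ref{prop:general}, together with the defining conditions (T1)--(T5) of a tilting object.

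First I would show that a tilting object $T$ is a 2-term silting complex, i.e., that (T1)--(T5) imply (S1), (S2), (S3). Condition (S2) is immediate from (T3) with $i=1$ and $X=T\in\T$ (using (T2)). For (S1) I would argue that $\Hom_\DAA(T,M[i])=0$ for $i<0$ because $T$ is an object of $\A$ (so $\Hom$ into negative shifts of $\A$-objects vanishes), and that $\Hom_\DAA(T,M[i])=\Ext^i_\A(T,M)=0$ for $i\geq 2$: for this I would use the short exact sequence $0\to tM\to M\to M/tM\to 0$ coming from the torsion pair $(\T,\F)$, note $\Ext^i_\A(T,tM)=0$ for $i>0$ by (T3), and handle $M/tM\in\F$ by writing a short exact sequence $0\to M/tM\to T'\to T''\to 0$ with $T',T''\in\T$ (available since $\T$ is a cogenerator by (T1)), which via the long exact sequence reduces $\Ext^i_\A(T,M/tM)$ for $i\geq 2$ to $\Ext^{i-1}_\A(T,T'')=0$. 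Finally (S3) follows from (T5): if $\Hom_\DAA(T,M[i])=\Ext^i_\A(T,M)=0$ for all $i$ (in particular all $i\geq 0$), then $M=0$.

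For the converse, let $\p$ be a 2-term silting complex; I want to produce a torsion pair witnessing that $\p$ is a tilting object. By Lemma~\ref{lem:tor}(b,d) we have $H^0(\p)\in\T(\p)$, $(\T(\p),\F(\p))$ is a torsion pair, $\T(\p)=\Fac H^0(\p)$, and the $\Ext$-projectives in $\T(\p)$ are exactly $\add H^0(\p)$. Set $T=H^0(\p)$; I claim $(\T(\p),\F(\p))$ verifies (T1)--(T5) for $T$. Conditions (T2) is $\T(\p)=\Fac T$, which is Lemma~\ref{lem:tor}(d)(iii); (T4) is exactly the characterization of $\Ext$-projectives in Lemma~\ref{lem:tor}(d)(iv), once one knows $\Ext^i_\A(T,X)=0$ for $i>1$ (so that the hypothesis of (T4) reduces to $\Ext^1_\A(Z,X)=0$); and (T3) for $i=1$ is the $\Ext$-projectivity of $T=H^0(\p)$ in $\T(\p)$, while for $i\geq 2$ it follows from a dimension-shifting argument using $\T(\p)=\Fac T$ and (S1) (applied to $\p$ via the isomorphism $\Hom_\DAA(H^0(\p),M)\cong\Hom_\DAA(\p,M)$ of Lemma~\ref{lem:pre}(d), plus the monomorphism \eqref{eq:mono}). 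For (T1), that $\T(\p)$ is a cogenerator for $\A$: given $X\in\A$, I would use that $\Hom_\DAA(\p,X[i])=0$ for $i\neq 0,1$ together with condition (S3)-style bounded-ness, or more directly the t-structure of Proposition~\ref{prop:general}(a) and its heart description in (d), to exhibit a short exact sequence $0\to X\to T'\to T''\to 0$ with $T',T''\in\T(\p)$; concretely, embedding $X$ into the torsion part of $H^0(\p)[1]$'s environment or using that $\F(\p)[1]$-to-$\T(\p)$ torsion pair in $\C(\p)$ forces every $X\in\A$ to sit inside something in $\add H^0(\p)$ modulo $\T(\p)$. Finally (T5): if $\Ext^i_\A(T,X)=\Hom_\DAA(\p,X[i])=0$ for all $i\geq 0$, then also for $i<0$ this Hom vanishes ($X\in\A$), so $\Hom_\DAA(\p,X[i])=0$ for all $i\in\mathbb Z$, whence $X=0$ by (S3).

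The main obstacle I anticipate is verifying the cogenerator condition (T1) from the silting axioms: unlike (T2)--(T5), which translate almost mechanically through Lemma~\ref{lem:tor}, producing the embedding $0\to X\to T'\to T''\to 0$ requires genuinely using the t-structure machinery of Proposition~\ref{prop:general} (its heart $\C(\p)$ and the induced torsion pair $(\F(\p)[1],\T(\p))$ on it) rather than just formal manipulation of $\T(\p),\F(\p)$ inside $\A$. I would expect the cleanest route to be: show that for $X\in\A$ the object $X$ fits, via the triangle realizing $X$ as an extension in the heart, into a co-resolution by objects of $\add H^0(\p)$, then truncate to length one using that $H^0(\p)$ has $\Ext$-projective dimension behaviour controlled by (S1). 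The symmetric handling of $\Ext^{\geq 2}$-vanishing in both directions (for (T3) and for reducing (T4)) is the other technical point, but it is a routine dimension shift once (T2) and $\Fac H^0(\p)=\T(\p)$ are in hand.
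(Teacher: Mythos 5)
Your overall structure matches the paper's: both implications, running through (S1)--(S3) in one direction and (T1)--(T5) in the other. For the tilting $\Rightarrow$ silting direction your argument is sound and in fact more self-contained than the paper's, which simply cites \cite[Lemma~4.1]{hrs} for (S1); your dimension-shift via a coresolution of $M/tM$ in $\T$ is a legitimate reconstruction of that lemma.

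For the converse you set $T = H^0(\p)$ for a general 2-term silting $\p$, but the statement is about an object $T$ \emph{in} $\A$, so one should observe that $T = H^0(T)$ and $H^{-1}(T)=0$; this is what the paper does and it immediately trivializes (T3) via (S1) without any dimension shift through Lemma~\ref{lem:pre}(d). Similarly, your parenthetical about needing $\Ext^i_\A(T,X)=0$ to ``reduce'' the hypothesis of (T4) is misplaced: the hypothesis already includes $\Ext^1_\A(Z,X)=0$, which gives $\Ext$-projectivity of $Z$ directly, and Lemma~\ref{lem:tor}(d)(iv) then yields $Z\in\add T$. These are minor.

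The genuine gap is (T1), which you yourself flag as the main obstacle. Your sketch (``embedding $X$ into the torsion part of $H^0(\p)[1]$'s environment'', ``co-resolution by objects of $\add H^0(\p)$, then truncate to length one'') does not produce the required monomorphism $M\hookrightarrow E$ with $E\in\T(T)$, and it is not clear how to make it precise, because an arbitrary $M\in\A$ does not even live in the heart $\C(T)$ ($M\in\C(T)$ iff $M\in\T(T)$). The paper's argument is sharper and worth internalizing: write the torsion decomposition $0\to tM\to M\to M/tM\to 0$, use that $T$ is a projective generator of $\C(T)$ (Proposition~\ref{prop:general}(f)) to produce a short exact sequence $0\to N\to T'\to (M/tM)[1]\to 0$ in $\C(T)$ with $T'\in\add T$ and $N\in\T(T)$, observe via (S1) that the connecting map $T'[-1]\to M/tM$ lifts to $T'[-1]\to M$, and then apply the octahedral axiom to the composite $T'[-1]\to M\to M/tM$ to obtain a commuting diagram whose second column is a triangle $M\to E\to T'\to M[1]$ with $E\in\T(T)$ (closed under extensions, since $E$ sits in $tM\to E\to N$); this triangle lies in $\A$, giving $0\to M\to E\to T'\to 0$. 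That octahedral-axiom step is the missing idea in your proposal.
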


\begin{proof}
First assume that $T$ is a tilting object in $\A$. Then (S1) follows from \cite[Lemma~4.1]{hrs}, while (S2) follows from (T2) and (T3) and (S3) follows from (T5). So $T$ is a 2-term silting complex in $\DAA$.

Now assume that $T\in\A$ is a 2-term silting complex in $\DAA$. In this case $T=H^0(T)$ is a projective generator in $\C(T)$ by Proposition
\ref{prop:general}(f). Let $\T=\T(T)$. Then (T2) follows from Lemma~\ref{lem:tor}(d.iii); (T3) and (T4) follows from (S1) and Lemma~\ref{lem:tor}(d.iv); (T5) follows from (S3). Now we prove (T1). For any $M\in\A$, consider the canonical exact sequence
\begin{equation}\label{eq:t5}
0\to tM\to M\to M/tM\to 0
\end{equation}
with respect to the torsion pair  $(\T(T),\F(T))$.
Since $T$ is a projective generator in $\C(T)$, there is an exact sequence
\begin{equation}\label{eq:t4}
0\rightarrow N\rightarrow T'\rightarrow (M/tM)[1]\rightarrow 0
\end{equation}
in $\C(T)$ with $T'\in\add T$. Since $T'\in\T(T)$ and $\T(T)$ is closed under subobjects in $\C(T)$, we have $N$ is also in $\T(T)$. The exact sequences \eqref{eq:t5} and \eqref{eq:t4} induce triangles
\[tM\to M\to M/tM\to tM[1]\]
and
\[T'[-1]\rightarrow M/tM\rightarrow N\rightarrow T'\]
in $D^b(\A)$. Because $\Hom_\DAA(T'[-1],(tM)[1])\cong \Hom_\DAA(T',(tM)[2])=0$ by (S1), we have that
the map $T'[-1] \to M/tM$ factors as $T'[-1] \to M \to M/tM$.
Hence, by the octahedral axiom, we have the following commutative diagram of triangles:
\[\xymatrix{
&T'[-1]\ar@{=}[r]\ar[d]&T'[-1]\ar[d]\\
tM\ar[r]\ar@{=}[d]&M\ar[r]\ar[d]&M/tM\ar[r]\ar[d]&tM[1]\ar@{=}[d]\\
tM\ar[r]&E\ar[r]\ar[d]&N\ar[r]\ar[d]&tM[1]\\
&T'\ar@{=}[r]&T'
}\]
The triangle in the second column gives an exact sequence $0\to M\to E\to T'\to 0$ in $\A$. By the triangle in the third row,
we have $E\in\T(T)$ since $\T(T)$ is closed under extensions. Hence $\T(T)$ is a cogenerator of $\A$.
\end{proof}

\begin{remark}
By Proposition~\ref{prop:back}, 
an object $T$ in an $\Ext$-finite abelian category $\A$ is a tilting object if and only if 
$\Ext^i_\A(T,-)=0$ for $i>1$, $\Ext^1_\A(T,T)=0$ and condition (T5) holds. 
Note that in \cite{hrs}, the category $\A$ is only assumed to be $\Hom$-finite. 
However, if $\A$ has a tilting object $T$, 
then by \cite[Theorem 4.6]{hrs}, we have that $\DAA$ is equivalent to the bounded derived category of $\End_A(T)$. 
Since $\End_A(T)$ is a finite dimensional algebra, 
it follows that also $\DAA$ is Krull-Schmidt and $\Hom$-finite. 
Hence $\A$ is $\Ext$-finite. 

\end{remark}

Let $D^c(\A)$ be the full subcategory of $D^b(\A)$
consisting of the complexes $\x$ that satisfy
\[\Hom_{D^b(\A)}(\x,\A[i])=0\]
for $i>>0$.
It is clear that $D^c(\A)$ is a thick subcategory of $D^b(\A)$. 
Recall that an object $\p$ in a triangulated category $\T$ is called a silting object (see \cite[Definition 2.1]{ai}) if
\begin{itemize}
\item[-] $\Hom_\T(\p,\p[i])=0$ for any $i>0$, and
\item[-] $\thick \p=\T$,
\end{itemize}
where $\thick \p$ denotes the smallest thick subcategory of $\T$ containing $\p$.

\begin{lemma}\label{lem:thick}
Let $\p$ be a 2-term silting complex in $D^b(\A)$. Then $\p\in D^c(\A)$ and $\thick \p=D^c(\A)$. In particular, $\p$ is a silting object in $D^c(\A)$.
\end{lemma}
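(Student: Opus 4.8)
\textbf{Proof plan for Lemma~\ref{lem:thick}.}
The plan is to show the two assertions $\p \in D^c(\A)$ and $\thick\p = D^c(\A)$ separately, and then observe that the ``silting object'' claim is immediate from the definition once these are established, since (S2) together with Lemma~\ref{lem:pre}(b) already gives $\Hom_{D^b(\A)}(\p,\p[i]) = 0$ for all $i > 0$.

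For the first assertion, I would argue that $\p$ has finite projective-type behaviour against all of $\A$. By Lemma~\ref{lem:pre}(c) there is a triangle $H^{-1}(\p)[1]\to\p\to H^0(\p)\to H^{-1}(\p)[2]$, so it suffices to bound $\Hom_{D^b(\A)}(H^j(\p), \A[i])$ for large $i$ and $j\in\{-1,0\}$; equivalently, to show each $H^j(\p)$ lies in $D^c(\A)$. The key point is that for any object $M\in\A$ we have $\Hom_{D^b(\A)}(\p, M[i]) = 0$ for $i\neq 0,1$ by (S1), and then, using the torsion pair $(\T(\p),\F(\p))$ and Lemma~\ref{lem:tor}(d), every object of $\A$ is a two-step extension of objects of $\T(\p)$ and $\F(\p)[\,\cdot\,]$, while the $\Ext$-projectivity/adjunction results let one compute $\Hom_{D^b(\A)}(\x, -)$ against such objects. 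More directly: from Lemma~\ref{lem:tor}(d.iii), $\T(\p) = \Fac H^0(\p)$, and $H^0(\p)$ is $\Ext$-projective in $\T(\p)$; so for $M\in\A$ one builds a resolution of $M[\text{shift}]$ by objects in $\add H^0(\p)$ and $\add H^{-1}(\p)[1]$ of bounded length (length $\le 3$, reflecting that the heart $\C(\p)\simeq\mod B$ has $\gld B \le 3$ by Proposition~\ref{prop:general} and the results of Section~\ref{sec:silted-shod}), which forces $\Hom_{D^b(\A)}(\p, M[i]) = 0 = \Hom_{D^b(\A)}(H^j(\p), M[i])$ for $i$ large. Hence $\p\in D^c(\A)$.

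For the second assertion I would show both inclusions. The inclusion $\thick\p\subseteq D^c(\A)$ is automatic since $D^c(\A)$ is thick (stated in the excerpt) and contains $\p$. For $D^c(\A)\subseteq\thick\p$, the natural route is: first show $\A\cap D^c(\A)\subseteq\thick\p$, then bootstrap to all of $D^c(\A)$ using that every object of $D^c(\A)$ has bounded cohomology lying in $\A\cap D^c(\A)$ (via Lemma~\ref{lem:triangles}, which writes any $\x\in D^b(\A)$ as an iterated extension of its cohomology objects, shifted) and that $\thick\p$ is closed under triangles and shifts. To get $M\in\thick\p$ for $M\in\A\cap D^c(\A)$: since $\Hom_{D^b(\A)}(\p,-)\colon\C(\p)\to\mod B$ is an equivalence by Proposition~\ref{prop:general}(f) and $\p$ corresponds to a projective generator of $\mod B$, every object of $\C(\p)$ — in particular $M$ itself, once one checks $M\in\C(\p)$ or resolves $M$ by objects of $\C(\p)$ — admits a finite resolution by objects of $\add\p$ inside $D^b(\A)$, because $B$ has finite global dimension. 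Concretely, one uses the torsion pair $(\F(\p)[1],\T(\p))$ in $\C(\p)$ (Proposition~\ref{prop:general}(e)) together with Proposition~\ref{prop:tf}(c,e) / Lemma~\ref{lem:tor}(d) to resolve $M$ by $\add H^0(\p)$ and $\add H^{-1}(\nu\p)$ in finitely many steps, and these summands lie in $\thick\p$ via the triangle of Lemma~\ref{lem:pre}(c). Assembling, $M\in\thick\p$.

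The main obstacle I expect is the finiteness/boundedness bookkeeping in the first part: proving that the resolutions of arbitrary $M\in\A$ (or of $M[1]$) by $\add(H^0(\p)\oplus H^{-1}(\p)[1])$ terminate, i.e. that $\Hom_{D^b(\A)}(\p,M[i])$ really does vanish for all large $i$ rather than merely for $i\neq 0,1$ with $M\in\A$ — one must leverage the global-dimension-$\le 3$ bound on $B\simeq\End_{\DAA}(\p)$ and transport it back through the equivalence $\C(\p)\simeq\mod B$, being careful that objects of $\A$ need not themselves lie in the heart $\C(\p)$. Once that is handled, closure of $D^c(\A)$ under the thick-subcategory operations and the cohomological dévissage of Lemma~\ref{lem:triangles} make the remaining steps routine, and the silting conclusion is then just unwinding Definition of silting object (the Hom-vanishing from (S2) and Lemma~\ref{lem:pre}(b), and $\thick\p = D^c(\A)$ just proved).
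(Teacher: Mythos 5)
The easy parts of your plan are fine: $\thick\p\subseteq D^c(\A)$ because $D^c(\A)$ is thick and contains $\p$, and the silting claim follows from (S2), Lemma~\ref{lem:pre}(b) and $\thick\p = D^c(\A)$. The membership $\p\in D^c(\A)$ is also true for the reason you note in passing, namely (S1): that axiom literally says $\Hom_{D^b(\A)}(\p,M[i])=0$ for all $M\in\A$ and $i\neq 0,1$, which is the definition of lying in $D^c(\A)$. The surrounding discussion about resolving $M\in\A$ by $\add H^0(\p)$ and $\add H^{-1}(\p)[1]$ in length at most $3$ is both superfluous and unavailable, since the bound $\gld B\le 3$ is established only for hereditary $\A=\mod H$ in Section~\ref{sec:silted-shod}, whereas this lemma concerns an arbitrary $\Ext$-finite abelian $\A$.

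The inclusion $D^c(\A)\subseteq\thick\p$ is where the plan genuinely breaks. Your bootstrap through standard cohomology is unsound: thick subcategories of $D^b(\A)$ need not be closed under $H^i(-)$, so the cohomology objects of an $\x\in D^c(\A)$ need not lie in $\A\cap D^c(\A)$. In the model case $\A=\mod A$ one has $D^c(\A)=K^b(\proj A)$, and a bounded complex of projectives can have cohomology of infinite projective dimension (e.g.\ $A\xrightarrow{x}A$ over $A=\k[x]/(x^2)$, whose cohomology is the simple module). Even the base step $\A\cap D^c(\A)\subseteq\thick\p$ is unsupported: objects of $\A$ need not lie in the heart $\C(\p)$, and resolving objects of $\C(\p)\simeq\mod B$ by $\add\p$ to finite length would require $B$ to have finite global dimension, which is not assumed. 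The paper instead uses the t-structure $(D^{\leq 0}(\p),D^{\geq 0}(\p))$: given $\x\in D^c(\A)$, boundedness (Proposition~\ref{prop:general}(b)) places $\x$ in $D^{\leq s}(\p)$ for some $s$, and the $D^c$-condition on $\x$ together with \eqref{eq:resolution} yields $\Hom_{D^b(\A)}(\x,D^{\leq s}(\p)[i])=0$ for $i\gg 0$. Iterated right $\add\p[-s]$-approximations then place $\x$ in $\add\p[-s]\ast\cdots\ast\add\p[-s+i-1]\ast D^{\leq s}(\p)[i]$, and for $i\gg 0$ the map to the last factor vanishes, so the triangle splits and $\x$ is a direct summand of an iterated extension of shifts of $\p$, hence in $\thick\p$. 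The idea missing from your plan is precisely this termination mechanism: use the $D^c$-property of $\x$ itself (not of its cohomology) to cut off an $\add\p$-approximation tower after finitely many steps.
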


\begin{proof}

By (S1) we have that $\p$ belongs to $D^c(\A)$.
Let $\x$ be an object in $D^c(\A)$. In particular, by Proposition~\ref{prop:general} (b), the complex
$\x$ belongs to $D^{\leq s}(\p)$ for some integer $s$. 
Using \eqref{eq:resolution} we obtain
 \[D^{\leq s}(\p)=D^{\leq 0}(\p)[-s]=\left(\bigcup_{z>0}\A[z]\ast\cdots
 \ast\A[1]\ast\T(\p)\right)[-s]
\subset\left(\bigcup_{z>0}\A[z]\ast\cdots \ast\A[1]\ast\A\right)[-s].\]
So by definition, we obtain
\begin{equation}\label{eq:vanish}
\Hom_{D^b(\A)}(\x,D^{\leq s}(\p)[i])=0 \text{ for } i>>0.
\end{equation}
 Take a right $\add\p[-s]$-approximation $\p'[-s]\to\x$
and extend it to a triangle
\[\x_1\to\p'[-s]\to\x\to\x_1[1].\]
By applying $\Hom_{D^b(\A)}(\p,-)$ to this triangle,
we have that $\x_1$ is also in $D^{\leq s}(\p)$.
Then $\x\in\add\p[-s]\ast D^{\leq s}(\p)[1]$.
Recursively, we have that $\x$ is in
$\add\p[-s]\ast\add\p[-s+1]\ast\cdots\ast\add\p[-s+i-1]\ast D^{\leq s}(\p)[i]$
for any $i>0$.
Then there is a triangle
$$\x' \to \x \xrightarrow{u} \x'' \to \x'[1]$$
with $\x' \in \add\p[-s]\ast\add\p[-s+1]\ast\cdots\ast\add\p[-s+i-1]$
and $\x'' \in D^{\leq s}(\p)[i]$.
By \eqref{eq:vanish}, for $i >> 0$, we have that $u = 0$.
Hence $\x\in\thick\p$, which implies that $\thick \p=D^c(\A)$. By (S2) and Lemma~\ref{lem:pre}(b), we have $\Hom_\DAA(\p,\p[i])=0$ for any $i>0$. Hence it follows that $\p$ is a silting object in $D^c(\A)$.
\end{proof}

Consider now the case with $\A = \mod A$, for a finite
dimensional algebra $A$. Then we have two different
definitions of 2-term silting complexes, which we now compare.

\begin{corollary}\label{cor:alg}
Let $A$ be a finite dimensional $\k$-algebra. Regard $K^b(\proj A)$ as a thick subcategory of $D^b(A)$. Then $D^c(\mod A)=K^b(\proj A)$ and a complex in  $\DA$ satisfies Definition
\ref{def:silting} if and only if it is
a 2-term silting complex in $K^b(\proj A)$ as defined in the
introduction.
\end{corollary}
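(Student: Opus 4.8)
The plan is to first establish the set-theoretic equality $D^c(\mod A)=\K$, and then read off the equivalence of the two notions of $2$-term silting complex from this equality together with Lemma~\ref{lem:thick} and Lemma~\ref{lem:pre}, with only routine homological bookkeeping in between.

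For $D^c(\mod A)=\K$: the inclusion $\K\subseteq D^c(\mod A)$ is immediate, since for $\x\in\K$ and $M\in\mod A$ the complex $\Hom_A(\x,M)$ is bounded, so $\Hom_{\DA}(\x,M[i])=0$ for $i\gg 0$. For the reverse inclusion I would take $\x\in D^c(\mod A)$ and a minimal projective resolution $P^\bullet\to\x$, that is, a bounded-above complex of finitely generated projective $A$-modules whose differentials land in the radical. Minimality makes every differential of $\Hom_A(P^\bullet,A/\rad A)$ zero, so $\Hom_{\DA}(\x,(A/\rad A)[i])\cong P^{-i}/\rad P^{-i}$ for all $i$; since $\x\in D^c(\mod A)$ this vanishes for $i\gg 0$, forcing $P^{-i}=0$ for $i\gg 0$, so $P^\bullet$ is bounded and $\x\in\K$. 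Combined with Lemma~\ref{lem:thick}, this already shows that any $\p$ satisfying Definition~\ref{def:silting} has $\thick\p=D^c(\mod A)=\K$, i.e.\ it generates $\K$ as a triangulated category.

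Next I would treat the two implications. If $\p\in\K$ is $2$-term silting in the sense of the introduction, with nonzero terms in degrees $-1$ and $0$, then (S1) holds because $\Hom_A(\p,M)$ is concentrated in cohomological degrees $0$ and $1$ for every module $M$; (S2) is the case $i=1$ of the presilting condition, read inside $\DA$; and (S3) follows because the full subcategory of those $Y$ in $\K$ with $\Hom_{\DA}(Y,M[i])=0$ for all $i$ is thick, contains $\p$, hence equals $\thick\p=\K$, hence contains $A$, so $M\cong\Hom_{\DA}(A,M)=0$. Conversely, assume (S1)--(S3). By (S1) we have $\Hom_{\DA}(\p,M[i])=0$ for $i\gg 0$ and all $M$, so $\p\in D^c(\mod A)=\K$, and $\p\neq 0$ by (S3). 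Now represent $\p$ by its minimal complex of finitely generated projectives, with nonzero terms in degrees $a\leq i\leq b$. The canonical surjection $P^{b}\to H^{b}(\p)$ is nonzero (its kernel lies in $\rad P^{b}$ by minimality) and kills the image of $P^{b-1}$, so it defines a nonzero cohomology class in $\Hom_{\DA}(\p,H^{b}(\p)[-b])$; by (S1) this forces $-b\in\{0,1\}$, i.e.\ $b\leq 0$. Dually, minimality kills the differential into the top degree of $\Hom_A(\p,A/\rad A)$, so $\Hom_{\DA}(\p,(A/\rad A)[-a])\cong P^{a}/\rad P^{a}\neq 0$, forcing $-a\in\{0,1\}$, i.e.\ $a\geq -1$. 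Thus $\p$ has nonzero terms only in degrees $-1$ and $0$, hence is $2$-term; the presilting condition $\Hom_{\K}(\p,\p[i])=0$ for $i>0$ then follows from Lemma~\ref{lem:pre}(b) together with (S2), and $\thick\p=\K$ was shown above.

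The step I expect to be the main obstacle is precisely the $2$-termness in this last implication. By (S1) and Lemma~\ref{lem:pre}(a) only the \emph{cohomology} of $\p$ is concentrated in degrees $-1,0$, which does not by itself bound the terms of the minimal complex (a minimal complex $P^{-2}\hookrightarrow P^{-1}$ with image in the radical has cohomology only in degree $-1$). The point is to feed back into (S1) the particular test modules $H^{b}(\p)$ and $A/\rad A$, and to use minimality to see that the extreme terms $P^{b}$ and $P^{a}$ are detected by $\Hom_{\DA}(\p,-)$ exactly in the forbidden degrees $-b$ and $-a$ unless $-1\leq a\leq b\leq 0$. Everything else is either elementary or a direct appeal to Lemma~\ref{lem:thick} and Lemma~\ref{lem:pre}.
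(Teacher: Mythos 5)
Your proposal is correct, and the overall strategy agrees with the paper's: establish $D^c(\mod A)=K^b(\proj A)$ by a minimal-resolution argument, use Lemma~\ref{lem:thick} to place a complex satisfying (S1)--(S3) inside $K^b(\proj A)$ as a silting object, pass to a minimal representative, and then bound the nonzero terms; the converse direction is routine except for (S3). Your caution about ``$2$-termness'' is well placed: the paper's one-line deduction ``$H^i(\p)=0$ for $i>0$ or $i<-1$, so $P^i=0$ for $i\neq 0,-1$'' is genuinely too fast at the bottom end. Minimality does give the top-degree bound (if $b$ is maximal with $P^b\neq 0$ then $H^b(\p)=P^b/\im d^{b-1}\neq 0$, so $b\leq 0$), but at the bottom end $H^a(\p)=\ker d^a$ can vanish with $P^a\neq 0$ even in a minimal complex, exactly as you observe. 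Feeding $A/\rad A$ (or $A$ itself, via the dual complex $\Hom_A(\p,A)$) back into (S1) is the right way to close this, and your computation $\Hom_{\DA}(\p,(A/\rad A)[-a])\cong P^a/\rad P^a$ is correct. For the converse direction you also differ slightly: the paper deduces (S3) from the explicit approximation triangle $A\to\p'\to\p''\to A[1]$ of \cite[Corollary~3.3]{bz}, whereas you instead observe that $\{Y\in K^b(\proj A)\mid \Hom_{\DA}(Y,M[i])=0\ \forall i\}$ is a thick subcategory containing $\p$, hence equals $\thick\p=K^b(\proj A)$ and therefore contains $A$. This is a clean, self-contained replacement for the citation and is arguably the more conceptual route. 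In short: same decomposition and same key lemmas, with one step argued more carefully than the paper and one citation replaced by an elementary thickness argument.
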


\begin{proof}
It is clear that $K^b(\proj A)\subset D^c(\mod A)$. Conversely,
it is straightforward to check that any complex $\x\in D^c(\mod A)$ has a projective resolution of finite length, so it is in $K^b(\proj A)$. Hence $D^c(\A)=K^b(\proj A)$.

Let $\p$ be a complex $\{d^i \colon P^i\to P^{i+1}\}_{i\in\mathbb{Z}}$ in $D^b(\mod A)$ satisfying (S1), (S2) and (S3). Then by Lemma~\ref{lem:thick}, the complex $\p$ is a silting object in $D^c(\A)=K^b(\proj A)$. Up to isomorphism, we may assume that $\p$
is minimal in the sense that $\im d^i \subseteq \rad P^{i+1}$
for all $i$.
By Lemma~\ref{lem:pre}(a), $H^i(\p)=0$ for $i>0$ or $i<-1$, so $P^i=0$ for $i\neq 0,-1$. Hence $\p$ is a 2-term silting complex in $K^b(\proj A)$.

Let $\p$ be a 2-term silting complex in $K^b(\proj A)$, as defined
in the introduction. Then it is clear that $\p$ satisfies (S1) and (S2). There is a triangle $A\to\p'\to\p''\to A[1]$, with
$\p', \p''$ in  $\add \p$, see \cite[Corollary~3.3]{bz}. Since $A$ satisfies (S3), so does $\p$. Thus, the proof is finished.
\end{proof}

We also have the following application of Lemma~\ref{lem:thick}.

\begin{corollary}\label{cor:finite}
Let $\A$ be an $\Ext$-finite abelian category satisfying that for any $M\in\A$, there is an $m\in\mathbb{Z}$ such that $\Ext_\A^i(M,-)=0$ for any $i>m$.
Then $D^c(\A)=D^b(\A)$ and the 2-term silting complexes in $D^b(\A)$
are precisely the silting objects in $D^b(\A)$ satisfying (S1).
\end{corollary}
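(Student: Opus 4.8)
The plan is to prove the two assertions in turn, first $D^c(\A)=\DAA$, then the identification of 2-term silting complexes with silting objects satisfying (S1). For the first, I would show $\A\subseteq D^c(\A)$ and then invoke thickness. For $M,N\in\A$ one has $\Hom_{\DAA}(M,N[i])=\Ext^i_\A(M,N)$, which by hypothesis vanishes for $i>m(M)$ with the bound $m(M)$ independent of $N$; hence $M\in D^c(\A)$. Since $D^c(\A)$ is a thick subcategory of $\DAA$, it contains the triangulated subcategory generated by $\A$. By Lemma~\ref{lem:triangles} every object of $\DAA$ is obtained by finitely many triangles from shifts of its cohomology objects, so that triangulated subcategory is all of $\DAA$, and therefore $D^c(\A)=\DAA$.

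For the second assertion, one inclusion is essentially Lemma~\ref{lem:thick}: if $\p$ is a 2-term silting complex in $\DAA$, then it satisfies (S1) by definition, $\thick\p=D^c(\A)=\DAA$ by Lemma~\ref{lem:thick} together with the first part, and $\Hom_{\DAA}(\p,\p[i])=0$ for $i>0$ by (S2) and Lemma~\ref{lem:pre}(b); hence $\p$ is a silting object in $\DAA$ satisfying (S1). For the converse, let $\p$ be a silting object in $\DAA$ satisfying (S1). Then (S2) is immediate, since $\Hom_{\DAA}(\p,\p[1])=0$ is part of being a silting object. For (S3), I would fix $M\in\A$ with $\Hom_{\DAA}(\p,M[i])=0$ for all $i\in\mathbb{Z}$ and consider the full subcategory $\mathcal{N}=\{\y\in\DAA\mid\Hom_{\DAA}(\y,M[i])=0\text{ for all }i\in\mathbb{Z}\}$. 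Applying $\Hom_{\DAA}(-,M[i])$ to triangles and reading off the long exact sequences shows that $\mathcal{N}$ is closed under shifts, cones and direct summands, hence is a thick subcategory; it contains $\p$, so it contains $\thick\p=\DAA$, and in particular $M\in\mathcal{N}$. Then $\id_M\in\Hom_{\DAA}(M,M)=0$, so $M=0$ and (S3) holds. Thus $\p$ satisfies (S1)--(S3), i.e.\ it is a 2-term silting complex.

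I do not expect a genuine obstacle here: the substantive content is already packaged in Lemmas~\ref{lem:thick} and~\ref{lem:pre}, and what remains are two elementary facts about thick subcategories --- that $\A$ generates $\DAA$, and that the vanishing subcategory $\mathcal{N}$ of a single object is thick. The one point that needs a little care is the correct use of the hypothesis: the bound in $\Ext^i_\A(M,-)=0$ may depend on $M$, but it is uniform in the second variable, which is exactly what membership in $D^c(\A)$ requires; everything else is formal.
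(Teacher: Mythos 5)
Your proposal is correct and follows essentially the same route as the paper: the hypothesis puts $\A$ inside the thick subcategory $D^c(\A)$, which by generation (Lemma~\ref{lem:triangles}) forces $D^c(\A)=D^b(\A)$, and the identification of 2-term silting complexes with silting objects satisfying (S1) then comes from Lemma~\ref{lem:thick}. The only difference is that you spell out the converse direction (a silting object satisfying (S1) automatically satisfies (S2) and (S3) via the thick vanishing subcategory), which the paper leaves implicit in the phrase ``follows directly from Lemma~\ref{lem:thick}''.
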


\begin{proof}
For any object $M\in\A$, we have $M\in D^c(\A)$
by assumption.
Since the smallest thick subcategory of $\DAA$ containing $\A$ is $\DAA$,
we have $D^c(\A)=\DAA$. Then the last part of the assertion follows directly from Lemma~\ref{lem:thick}.
\end{proof}

\begin{definition}
Let $B$ be a finite dimensional $\k$-algebra. We call $B$ a quasi-silted algebra if there is an $\Ext$-finite hereditary abelian $\k$-category $\H$ and a 2-term silting complex $\p$ in $D^b(\H)$ such that $B=\End_{D^b(\H)}(\p)$.
\end{definition}

\begin{lemma}\label{lem:dec}
Let $\p$ be a two-term silting complex in the bounded derived category of an $\Ext$-finite hereditary abelian category $\H$. Then $\p\cong H^0(\p)\oplus H^{-1}(\p)[1]$ and $H^{-1}(\p)$ is projective in $\H$.
\end{lemma}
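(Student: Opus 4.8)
The plan is to invoke Lemma~\ref{lem:pre}, which applies because a 2-term silting complex satisfies (S1) by Definition~\ref{def:silting}. By Lemma~\ref{lem:pre}(c) there is a triangle
\[
H^{-1}(\p)[1]\to \p\to H^0(\p)\xrightarrow{\delta} H^{-1}(\p)[2]
\]
in $D^b(\H)$. First I would observe that $\delta$ lies in $\Hom_{D^b(\H)}(H^0(\p),H^{-1}(\p)[2])\cong \Ext^2_\H(H^0(\p),H^{-1}(\p))$, which vanishes because $\H$ is hereditary. Hence the triangle splits and we obtain the decomposition $\p\cong H^0(\p)\oplus H^{-1}(\p)[1]$; this is the first assertion.

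Next I would prove that $H^{-1}(\p)$ is projective in $\H$. Since $\H$ is hereditary, it is enough to show that $\Ext^1_\H(H^{-1}(\p),M)=0$ for every $M\in\H$, for then every short exact sequence ending in $H^{-1}(\p)$ splits. Using the decomposition just established, $H^{-1}(\p)[1]$ is a direct summand of $\p$, so $\Hom_{D^b(\H)}(H^{-1}(\p)[1],M[2])$ is a direct summand of $\Hom_{D^b(\H)}(\p,M[2])$. The latter is zero by (S1) (as $2\neq 0,1$), and therefore
\[
\Ext^1_\H(H^{-1}(\p),M)\cong \Hom_{D^b(\H)}(H^{-1}(\p),M[1])\cong \Hom_{D^b(\H)}(H^{-1}(\p)[1],M[2])=0,
\]
as required.

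I do not expect any genuine obstacle: essentially all the work has already been done in Lemma~\ref{lem:pre}, and the two remaining inputs---vanishing of $\Ext^2_\H$ and the characterization of projective objects by vanishing of $\Ext^1_\H$ against all objects---are just the hereditary hypothesis and the definition of a projective object, respectively. The only minor point to keep in mind is that $\H$ need not have enough projectives, so here \emph{projective} must be read as ``$\Hom_\H$ out of the object is exact'', equivalently the splitting of every short exact sequence with it as cokernel; this causes no trouble.
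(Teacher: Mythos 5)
Your proof is correct and follows exactly the same route as the paper: split the triangle from Lemma~\ref{lem:pre}(c) using $\Ext^2_\H=0$, then deduce $\Ext^1_\H(H^{-1}(\p),-)=0$ from (S1) because $H^{-1}(\p)[1]$ is a summand of $\p$. You have simply spelled out the two steps the paper's proof states more tersely, and your closing caveat about what ``projective'' means when $\H$ lacks enough projectives is a correct and sensible reading.
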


\begin{proof}
Since $\H$ is hereditary, triangle \eqref{eq:t1} is split. Hence we have $\p\cong H^0(\p)\oplus H^{-1}(\p)[1]$. Then by (S1), it follows that $H^{-1}(\p)$ is projective in $\H$.
\end{proof}

\begin{corollary}\label{cor:tilting}
Let $\H$ be an $\Ext$-finite hereditary abelian category such that there are 2-term silting complexes in $D^b(\H)$. Then $\H$ has tilting objects.
\end{corollary}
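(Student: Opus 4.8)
Corollary~\ref{cor:tilting} asserts: if $\H$ is an $\Ext$-finite hereditary abelian category that admits a $2$-term silting complex in $D^b(\H)$, then $\H$ has a tilting object.

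The plan is to take a $2$-term silting complex $\p$ in $D^b(\H)$ and modify it into an actual object of $\H$ that is still a $2$-term silting complex; Proposition~\ref{prop:back} then immediately upgrades it to a tilting object. The key structural input is Lemma~\ref{lem:dec}, which tells us that $\p\cong H^0(\p)\oplus H^{-1}(\p)[1]$ with $H^{-1}(\p)$ projective in $\H$. So the obstruction to $\p$ already lying in $\H$ is precisely the shifted projective summand $H^{-1}(\p)[1]$. The natural candidate for a replacement is the object $T := H^0(\p)\oplus H^{-1}(\p)$, i.e. we ``un-shift'' the projective part.

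First I would record that $T\in\H$, and that $|T|=|\p|$ (the number of non-isomorphic indecomposable summands is unchanged, since unshifting is a bijection on indecomposable summands). Next I would verify the three silting conditions for $T$ viewed as a complex concentrated in degree $0$, using Proposition~\ref{prop:back}'s reformulation (for an object of $\H$, being a $2$-term silting complex is equivalent to being a tilting object, i.e. to $\Ext^i_\H(T,-)=0$ for $i>1$ together with $\Ext^1_\H(T,T)=0$ and condition (T5)). Since $\H$ is hereditary, $\Ext^i_\H(T,-)=0$ automatically for $i\geq 2$, so (S1) is free. For $\Ext^1_\H(T,T)=0$: writing $P:=H^{-1}(\p)$ and $M:=H^0(\p)$, we must show $\Ext^1_\H(M,M)$, $\Ext^1_\H(M,P)$, $\Ext^1_\H(P,M)$ and $\Ext^1_\H(P,P)$ all vanish. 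The last two vanish because $P$ is projective in $\H$. For the first two, translate $\Ext^1_\H(M,-)=\Hom_{D^b(\H)}(M,-[1])$ back through the split triangle \eqref{eq:t1}, which reads $P[1]\to\p\to M\to P[2]$ and hence identifies $M$ with a summand of $\p$ up to the $P$-terms; combined with (S2) for $\p$, i.e. $\Hom_{D^b(\H)}(\p,\p[1])=0$, and Lemma~\ref{lem:pre}(d) (the monomorphism $\Hom_{D^b(\H)}(M,M[1])\hookrightarrow\Hom_{D^b(\H)}(\p,M[1])$), one gets $\Ext^1_\H(M,M)=0$ and similarly the mixed terms. The remaining condition (T5), that $\Ext^i_\H(T,N)=0$ for all $i\geq 0$ forces $N=0$: here $\Hom_\H(T,N)=0$ gives $\Hom_{D^b(\H)}(M,N)=0$ hence (via \eqref{eq:iso}) $\Hom_{D^b(\H)}(\p,N)=0$, while $\Ext^1_\H(T,N)=0$ together with $\Ext^1_\H(P,N)=0$ (which is automatic, $P$ projective) and the surjection dual to \eqref{eq:mono} gives $\Hom_{D^b(\H)}(\p,N[1])=0$; then (S3) for $\p$ yields $N=0$.

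I expect the main obstacle to be the bookkeeping in the $\Ext^1_\H(T,T)=0$ step: one must carefully pass the four $\Ext$-groups through the split triangle \eqref{eq:t1} and invoke (S2) for $\p$ in the right form, being careful that the isomorphism $\p\cong M\oplus P[1]$ is only an isomorphism in the derived category (so ``$M$ is a summand of $\p$'' must be used correctly). Once $T$ is shown to be a $2$-term silting complex in $D^b(\H)$, Proposition~\ref{prop:back} finishes the proof: $T$ is a tilting object of $\H$. Alternatively — and perhaps more cleanly — one can avoid re-deriving the silting axioms by noting that $\C(\p)\simeq\mod B$ with $B=\End_{D^b(\H)}(\p)$ by Proposition~\ref{prop:general}(f), observing that $T$ lies in $\H\cap\C(\p)$ and is in fact (using Lemma~\ref{lem:dec} and that $P$ is projective in $\H$, hence $P[1]$ and $M$ sit in complementary torsion classes of $\C(\p)$) the projective generator of $\C(\p)$ transported into $\H$; I would mention this as the conceptual reason the argument works, but carry out the direct verification above as it is self-contained.
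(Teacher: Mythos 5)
There is a genuine gap: your candidate $T:=H^0(\p)\oplus H^{-1}(\p)$ is not a tilting object in general, and the step you flag as ``the mixed terms'' is exactly where the argument breaks. Write $M=H^0(\p)$ and $P=H^{-1}(\p)$, so $\p\cong M\oplus P[1]$. Expanding (S2), $\Hom_{D^b(\H)}(\p,\p[1])=0$ gives you $\Ext^1_\H(M,M)=0$, $\Hom_\H(P,M)=0$, $\Ext^2_\H(M,P)=0$ and $\Ext^1_\H(P,P)=0$; the last two are automatic. It does \emph{not} give $\Ext^1_\H(M,P)=\Hom_{D^b(\H)}(M,P[1])=0$, since this group is a summand of $\Hom_{D^b(\H)}(\p,\p)$, not of $\Hom_{D^b(\H)}(\p,\p[1])$, and there is no reason for it to vanish. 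Lemma~\ref{lem:pre}(d) does not help here: the monomorphism \eqref{eq:mono} is $\Hom(H^0(\p),M[1])\hookrightarrow\Hom(\p,M[1])$, i.e.\ it controls maps \emph{into} shifts of objects of $\H$, and the target $\Hom(\p,P[1])$ contains the nonzero summand $\End(P)$, so nothing forces the source to vanish. The paper's own example in Section~\ref{sec:example} is a concrete counterexample to your $T$: there $\H=\mod\k\overset{\rightarrow}{D_4}$, $M=P_1/P_3\oplus P_1\oplus P_4$, $P=P_2$, and $\Ext^1_H(P_1/P_3,P_2)\cong\k\neq0$, so $M\oplus P$ is not rigid. (The ``alternative, conceptual'' remark at the end of your proposal does not repair this; $\p$ itself is the progenerator of $\C(\p)$, and it does not lie in $\H$.)

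The paper's proof replaces the shifted projective summand by a genuine silting mutation in the sense of Aihara--Iyama rather than by naive unshifting. One takes a right $\add H^0(\p)$-approximation $\x\to H^{-1}(\p)[1]$ with $\x\in\add H^0(\p)$, completes it to a triangle $\x'\to\x\to H^{-1}(\p)[1]\to\x'[1]$, and sets $\q:=\x'\oplus H^0(\p)$. By \cite[Theorem~2.31]{ai} together with Corollary~\ref{cor:finite} this $\q$ is again a $2$-term silting complex, and rotating the triangle shows $\x'\in H^{-1}(\p)\ast\x\subset\H$, so $\q\in\H$; then Proposition~\ref{prop:back} gives the tilting object. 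The point you miss is precisely that the approximation and the cone construction are what cancel the $\Ext^1$ between the two parts, which simply discarding the shift does not do.
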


\begin{proof}
Let $\p$ be a 2-term silting complex in $D^b(\H)$.
By Lemma~\ref{lem:dec}, we have $\p\cong H^0(\p)\oplus H^{-1}(\p)[1]$. 
Consider a triangle containing a right $\add H^0(\p)$-approximation of $H^{-1}(\p)[1]$
\[\x'\to \x\to H^{-1}(\p)[1]\to\x'[1],\]
where $\x\in\add H^0(\p)$. Since $\p$ is a silting object in $D^b(\H)$ by Corollary~\ref{cor:finite}, the complex $\x'\oplus H^0(\p)$ is also a silting object in $D^b(\H)$ by \cite[Theorem~2.31]{ai}. It is easy to check that $\x'\oplus H^0(\p)$ satisfies condition (S1),
so by Corollary~\ref{cor:finite} again,
$\x'\oplus H^0(\p)$ is a 2-term silting complex in $D^b(\H)$.
Furthermore, we have that $\x'\in H^{-1}(\p)\ast \x$ implies  $\x'\in\H$. Hence $\x'\oplus H^0(\p)$ is in $\H$. By Proposition~\ref{prop:back}, it is a tilting object.
\end{proof}

Now we have the following direct consequence,
which, together with Theorem~\ref{thm:1-silt}, Proposition~\ref{prop:back} and \cite[Theorem~II.2.3]{hrs}, also finishes the proof of part (b) of our main result, Theorem \ref{main}.

\begin{corollary}
Any quasi-silted algebra is shod.
\end{corollary}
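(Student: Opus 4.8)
The plan is to extract from a quasi-silted algebra $B=\End_{D^b(\H)}(\p)$ a \emph{split} torsion pair in $\mod B$ of exactly the shape required by Proposition~\ref{prop:CL}(d), and then quote that proposition. By Proposition~\ref{prop:general}(e,f), the functor $F=\Hom_{D^b(\H)}(\p,-)$ is an equivalence $\C(\p)\xrightarrow{\ \sim\ }\mod B$ carrying the torsion pair $(\F(\p)[1],\T(\p))$ of $\C(\p)$ onto a torsion pair $(\X,\Y)$ in $\mod B$, where $\X=F(\F(\p)[1])$ is the torsion class and $\Y=F(\T(\p))$ the torsion-free class. Since $\Ext^1$ in the heart of a t-structure is computed by degree-one morphisms in the ambient triangulated category, one gets $\Ext^1_B(\Y,\X)\cong\Ext^1_{\C(\p)}(\T(\p),\F(\p)[1])\cong\Hom_{D^b(\H)}(\T(\p),\F(\p)[2])=\Ext^2_\H(\T(\p),\F(\p))=0$, because $\H$ is hereditary; hence $(\X,\Y)$ is split.

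Next I would bound homological dimensions using the standard fact that, for the heart $\C$ of a t-structure on $\D$, the natural map $\Ext^2_{\C}(M,N)\to\Hom_{\D}(M,N[2])$ is a monomorphism, together with Proposition~\ref{prop:general}(d), which tells us that every object of $\C(\p)$ has cohomology concentrated in degrees $-1$ and $0$. Given $Y=F(M)\in\Y$ with $M\in\T(\p)$ and an arbitrary $Z=F(W)\in\mod B$ with $W\in\C(\p)$, one gets $\Ext^2_B(Y,Z)\hookrightarrow\Hom_{D^b(\H)}(M,W[2])$; applying $\Hom_{D^b(\H)}(M,-)$ to the $2$-shift of the truncation triangle $H^{-1}(W)[1]\to W\to H^0(W)\to H^{-1}(W)[2]$ and using $\Ext^i_\H=0$ for $i\geq 2$, this target group vanishes, so $\pd Y_B\leq 1$. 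Dually, for $X=F(N[1])\in\X$ with $N\in\F(\p)$ and arbitrary $Z=F(W)$, one has $\Ext^2_B(Z,X)\hookrightarrow\Hom_{D^b(\H)}(W,N[3])$, which vanishes by the same cohomological degree count, so $\id X_B\leq 1$.

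Thus $(\X,\Y)$ is a split torsion pair in $\mod B$ with $\id X_B\leq 1$ for every $X$ in the torsion class and $\pd Y_B\leq 1$ for every $Y$ in the torsion-free class, and Proposition~\ref{prop:CL}(d) yields that $B$ is shod. The only input beyond what is assembled earlier in this section is the monomorphism $\Ext^2_{\C}(M,N)\hookrightarrow\Hom_{\D}(M,N[2])$ for a heart; if one prefers to stay self-contained, the same conclusion is reached by unwinding the truncation triangles of Lemma~\ref{lem:triangles} to see directly that the $\Ext^2$-groups in question vanish. This cohomological argument is precisely what replaces the projective-cover and $\Ext$-injective resolutions used in the module-category case of Section~\ref{sec:silted-shod}, which are unavailable since a general $\Ext$-finite hereditary abelian category need not have enough projectives; and it is the one place where the hereditariness of $\H$ enters.
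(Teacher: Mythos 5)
Your proof is correct, and it is genuinely different from the paper's. The paper proceeds by a case analysis: it first uses Corollary~\ref{cor:tilting} to show $\H$ has a tilting object, then invokes Happel's structure theorem \cite[Theorem~4.2]{h2} (for an indecomposable $\Ext$-finite hereditary abelian category with a tilting object, either $\H$ has enough projectives, in which case $\H\simeq\mod H$ and one applies Theorem~\ref{thm:1-silt}, or $\H$ has no projectives, in which case Lemma~\ref{lem:dec} forces $\p\cong H^0(\p)$ to be a genuine tilting object, so $B$ is quasi-tilted and \cite[Theorem~II.2.3]{hrs} applies). Your argument instead works directly and uniformly in the heart $\C(\p)$: it transports the torsion pair $(\F(\p)[1],\T(\p))$ across the equivalence of Proposition~\ref{prop:general}(f), verifies the splitness by computing $\Ext^1_B(\Y,\X)\cong\Hom_{D^b(\H)}(\T(\p),\F(\p)[2])=0$, and bounds $\pd$ and $\id$ by a degree count via the monomorphism $\Ext^2_{\C(\p)}(-,-)\hookrightarrow\Hom_{D^b(\H)}(-,-[2])$ together with the cohomological description of $\C(\p)$ in Proposition~\ref{prop:general}(d). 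In effect you prove the analogue of Proposition~\ref{prop:basic}(a),(c) directly in the abelian-category setting, which is exactly the step whose module-theoretic proof (using projective covers and $\Ext$-injective resolutions) fails when $\H$ lacks enough projectives, as you observe. What the paper's route buys is brevity by reusing Theorem~\ref{thm:1-silt} and the classical quasi-tilted theory, at the cost of relying on Happel's classification of hereditary categories with tilting objects; what your route buys is a uniform, case-free argument that never leaves the present framework. The one input you import from outside the paper is the injectivity of the comparison map $\Ext^2_{\C}(M,N)\to\Hom_{\D}(M,N[2])$ for the heart of a $t$-structure; this is standard (e.g., Beilinson--Bernstein--Deligne, Remarque~3.1.17), and you rightly flag it, so it should be accompanied by a citation in a final write-up.
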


\begin{proof}
Let $\p$ be a 2-term silting complex in $D^b(\H)$ for an $\Ext$-finite hereditary abelian $\k$-category $\H$. By Corollary~\ref{cor:tilting}, it follows that $\H$ has tilting objects. Without loss of generality, we may assume that $\H$ is indecomposable. Then either $\H$ has enough projective objects or $\H$ does not have any projective objects, by \cite[Theorem 4.2]{h2}. For the first case, we have that $\H\simeq\mod H$ for some finite-dimensional hereditary $\k$-algebra $H$ and then $\End_{D^b(\H)}(\p)$ is shod by Theorem~\ref{thm:1-silt}. For the second case, by Lemma~\ref{lem:dec}, we have $\p\cong H^0(\p)$ and hence $\p$ is isomorphic to a tilting object by Proposition~\ref{prop:back}. Hence $\End_{D^b(\H)}(\p)$ is quasi-tilted and hence shod by \cite[Theorem II.2.3]{hrs}.
\end{proof}

\section{An example}\label{sec:example}

In this section we discuss a small example of
a strictly shod algebra, and point out how it can be
realized as the endomorphism algebra of a
2-term silting complex over a hereditary algebra.

Consider the algebra $B= \k Q/J$, where $Q$
is the Dynkin quiver of type $A_4$, with linear orientation
$$
\xymatrix{
1 & 2 \ar^{\alpha}[l] & 3 \ar^{\beta}[l] & 4 \ar^{\gamma}[l]
}
$$
and with ideal of relations  $J$ generated by $\beta\alpha$ and $\gamma\beta$. The global dimension of $B$ is 3.

This is a Nakayama algebra, it has exactly 7
(isomorphism classes of) indecomposable  modules.
Out of these, 5 are projective and/or injective.
In addition, we have the simples $S_2$ and $S_3$,
corresponding to vertex 2 and 3. It is easily verified that $S_2$ (resp. $S_3$) has projective dimension 1 (resp. 2), and injective dimension 2 (resp. 1).
So this is by definition a strictly shod algebra. It is easily
seen to be derived equivalent to a path algebra of
type $A_4$ (it can be obtained from $A_4$ by tilting twice).

Now consider the hereditary path algebra
$H = \k \overset{\rightarrow}{D_4}$ with $\overset{\rightarrow}{D_4}$ the quiver

$$
\xymatrix@C=33pt@R=7pt{
1 \ar[dr] & & \\
 & 3 \ar[r] & 4  \\
 2 \ar[ur] & &
}
$$

Let $P_i$ denote the projective $H$-module corresponding to vertex $i$. Consider the 2-term silting complex given
by $\p = \p_L \oplus \p_M \oplus \p_R$, with
$\p_L  = P_2[1]$, with $\p_M  = (P_{3} \to P_1)$ and with
$\p_R  = P_1 \oplus P_4$.
Then, it is easy to verify that $\End_{D^b(\mod H)}(\p) \cong B$.

We remark that by \cite[Section 3]{air}, there is a 1--1 correspondence between 2-term silting complexes and
so called support $\tau$-tilting modules for a given
algebra, given by $\p \mapsto H^0(\p)$.
Note that when the algebra is hereditary, support
$\tau$-tilting is the same as support tilting.
The support tilting module corresponding to $\p$
in our example is given by $P_1 \oplus P_4 \oplus
P_1/P_3$, which is easily seen to be a tilting module
for the path algebra of the subquiver spanned by the vertices
${1,3,4}$.

\end{document}